\renewcommand{\epsilon}{\ensuremath{\varepsilon}}
\renewcommand{\phi}{\ensuremath{\varphi}}
\renewcommand{\to}{\ensuremath{\longrightarrow}}
\newcommand{\K}{\ensuremath{\mathbb K}}
\newcommand{\N}{\ensuremath{\mathbb N}}
\newcommand{\R}{\ensuremath{\mathbb R}}
\newcommand{\Z}{\ensuremath{\mathbb Z}}
\newcommand{\C}{\ensuremath{\mathbb C}}
\newcommand{\FF}{\ensuremath{\mathbb F}}
\renewcommand{\ker}[1]{\ensuremath{\operatorname{\textnormal{Ker}}({#1})}}
\newcommand{\im}[1]{\ensuremath{\operatorname{\textnormal{Im}}({#1})}}
\renewcommand{\p@enumii}{}
\def\@enum@{\list{\csname label\@enumctr\endcsname}%
           {\usecounter{\@enumctr}\def\makelabel##1{
\normalfont\ignorespaces\emph{{##1}~}}
\setlength{\labelsep}{3pt}
\setlength{\parsep}{0pt}
\setlength{\itemsep}{5pt}
\setlength{\leftmargin}{0pt}
\setlength{\labelwidth}{0pt}
\setlength{\listparindent}{\parindent}
\setlength{\itemindent}{0pt}
\setlength{\topsep}{3pt plus 1pt minus 1 pt}}}
\def\@map#1#2[#3]{\mbox{$#1 \colon\thinspace #2 \to #3$}}
\def\map#1#2{\@ifnextchar [{\@map{#1}{#2}}{\@map{#1}{#2}[#2]}}
\DeclareRobustCommand*\textsubscript[1]{\@textsubscript{\selectfont#1}}
\def\@textsubscript#1{{\m@th\ensuremath{_{\mbox{\fontsize\sf@size\z@#1}}}}}
\DeclareMathOperator{\id}{\textnormal{Id}}
\newtheorem{thm}{Theorem}
\newtheorem{thmAnnexe}{Theorem}
\newtheorem{lem}{Lemma}
\newtheorem{prop}{Proposition}
\newtheorem{cor}{Corollary}
\newtheorem{deft}{Definition}
\newcommand{\eop}{%
  \relax
  \ifvmode
    \noindent
  \else
    \unskip
    \hskip0pt plus-1fill\relax 
  \fi
  \vrule width0pt
  \nobreak
  \hfill 
  {\hspace*{\fill}$\Box$}%\vrule width3pt height8pt depth0pt}%
}
\newenvironment{proof}{\par\vspace{\partopsep}\noindent\emph{Proof.}}
{\eop\par\vspace{\parsep}}
\newenvironment{prooftext}[1]{\par\vspace{\parsep}\noindent{\emph{Proof of #1.}}}
{\eop\par\vspace{\parsep}}
\newtheorem{rem}{Remark}
\newcommand{\gam}[2][n]{\gamma_{#1}^{p}#2}
\newcommand{\build}[3]{\mathrel{\mathop{\kern 0pt#1}\limits_{#2}^{#3}}}
\newcounter{liste}\newcounter{listes}
\newcommand{\svte}{\theliste\addtocounter{liste}{1}}
\newcommand{\eqsvte}{\alph{listes}\addtocounter{listes}{1}}
\newcommand{\eqprec}{\addtocounter{listes}{-1}\alph{listes}\addtocounter{listes}{1}}
\renewcommand{\@makeenmark}{\hbox{\,\@theenmark}}
\renewcommand{\enoteformat}{\parindent=1.5em\leavevmode\llap{\hbox{\bf \@theenmark}}}
\newlength{\longueur}\newlength{\longueurA}\newlength{\longueurB}
\newcommand{\GenerateursTorelli}{%
\begin{picture}(1,0.64335536)%
    \put(0,0){\includegraphics[width=\unitlength]{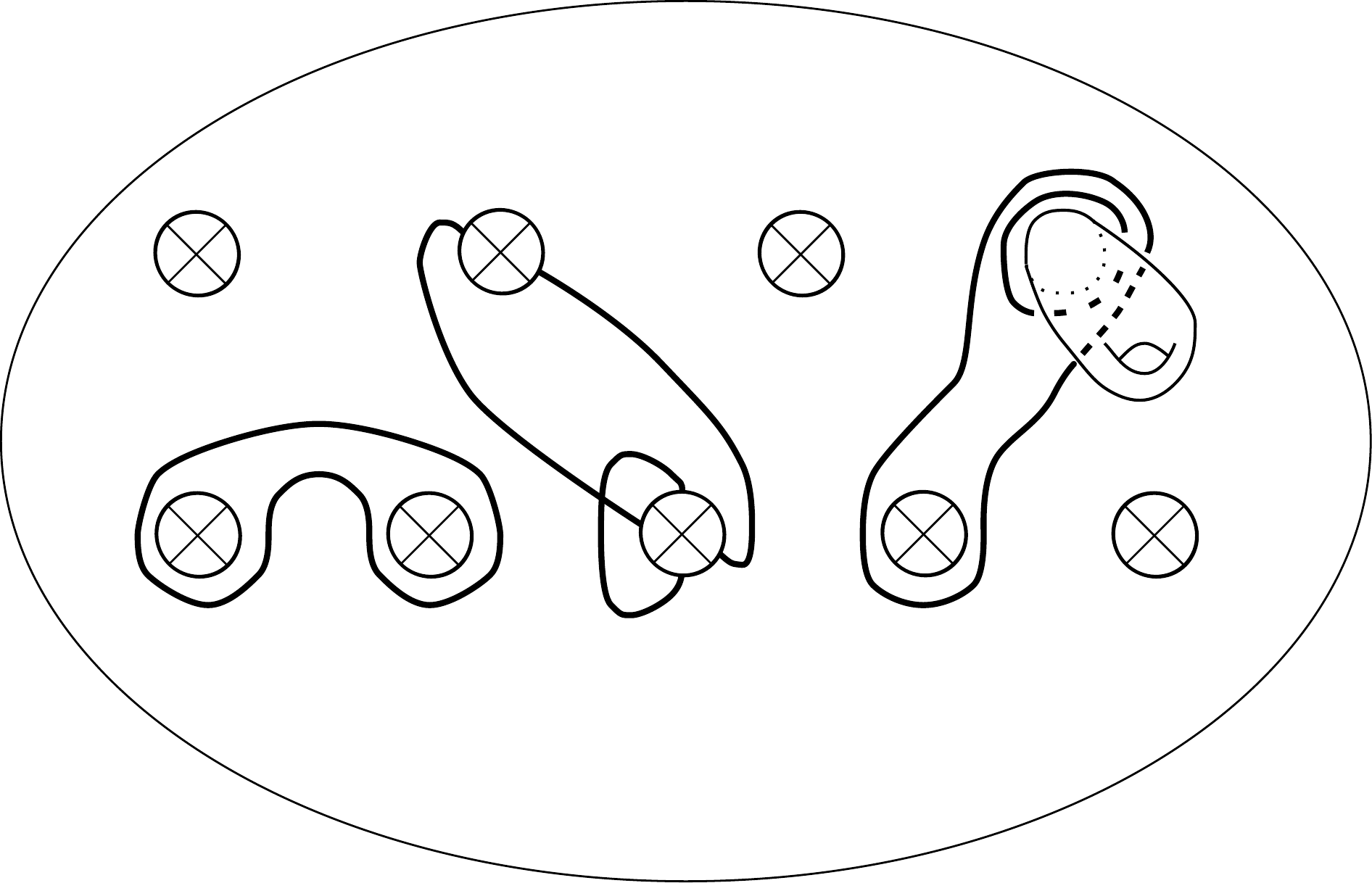}}%
    \put(0.43075439,0.45174422){\color[rgb]{0,0,0}\makebox(0,0)[lb]{\smash{.  .  .}}}%
    \put(0.20024017,0.25052588){\color[rgb]{0,0,0}\makebox(0,0)[lb]{\smash{. . .}}}%
    \put(0.37201725,0.25350634){\color[rgb]{0,0,0}\makebox(0,0)[lb]{\smash{. . .}}}%
    \put(0.5550905,0.24987161){\color[rgb]{0,0,0}\makebox(0,0)[lb]{\smash{. . .}}}%
    \put(0.72633019,0.24987161){\color[rgb]{0,0,0}\makebox(0,0)[lb]{\smash{. . .}}}%
    \put(0.46012298,0.40769136){\color[rgb]{0,0,0}\makebox(0,0)[lb]{\smash{$\alpha_{k,l}$}}}%
    \put(0.12868815,0.50900029){\color[rgb]{0,0,0}\makebox(0,0)[lb]{\smash{$1$}}}%
    \put(0.35824061,0.51105482){\color[rgb]{0,0,0}\makebox(0,0)[lb]{\smash{$l$}}}%
    \put(0.57025514,0.51048138){\color[rgb]{0,0,0}\makebox(0,0)[lb]{\smash{$g$}}}%
    \put(0.12238436,0.15805847){\color[rgb]{0,0,0}\makebox(0,0)[lb]{\smash{$1$}}}%
    \put(0.29859581,0.15805847){\color[rgb]{0,0,0}\makebox(0,0)[lb]{\smash{$j$}}}%
    \put(0.47480727,0.15805847){\color[rgb]{0,0,0}\makebox(0,0)[lb]{\smash{$k$}}}%
    \put(0.65101872,0.15805847){\color[rgb]{0,0,0}\makebox(0,0)[lb]{\smash{$u$}}}%
    \put(0.82723018,0.15805847){\color[rgb]{0,0,0}\makebox(0,0)[lb]{\smash{$n$}}}%
    \put(0.21049009,0.45174422){\color[rgb]{0,0,0}\makebox(0,0)[lb]{\smash{.  .  .}}}%
    \put(0.62165014,0.45174422){\color[rgb]{0,0,0}\makebox(0,0)[lb]{\smash{.  .  .}}}%
    \put(0.87128303,0.45174422){\color[rgb]{0,0,0}\makebox(0,0)[lb]{\smash{.  .  .}}}%
    \put(0.76849302,0.52516566){\color[rgb]{0,0,0}\makebox(0,0)[lb]{\smash{$t$}}}%
    \put(0.40138583,0.18742704){\color[rgb]{0,0,0}\makebox(0,0)[lb]{\smash{$\mu_{k}$}}}%
    \put(0.74735864,0.30490135){\color[rgb]{0,0,0}\makebox(0,0)[lb]{\smash{$\xi_{u,t}$}}}%
    \put(0.0783315,0.33426992){\color[rgb]{0,0,0}\makebox(0,0)[lb]{\smash{$\beta_{1,j}$}}}%
    \put(0.71709802,0.39300707){\color[rgb]{0,0,0}\makebox(0,0)[lb]{\smash{$\delta_{t}$}}}%
\end{picture}%
}
\newcommand{\Generateurs}{%
\begin{picture}(1,0.64335536)%
    \put(0,0){\includegraphics[width=\unitlength]{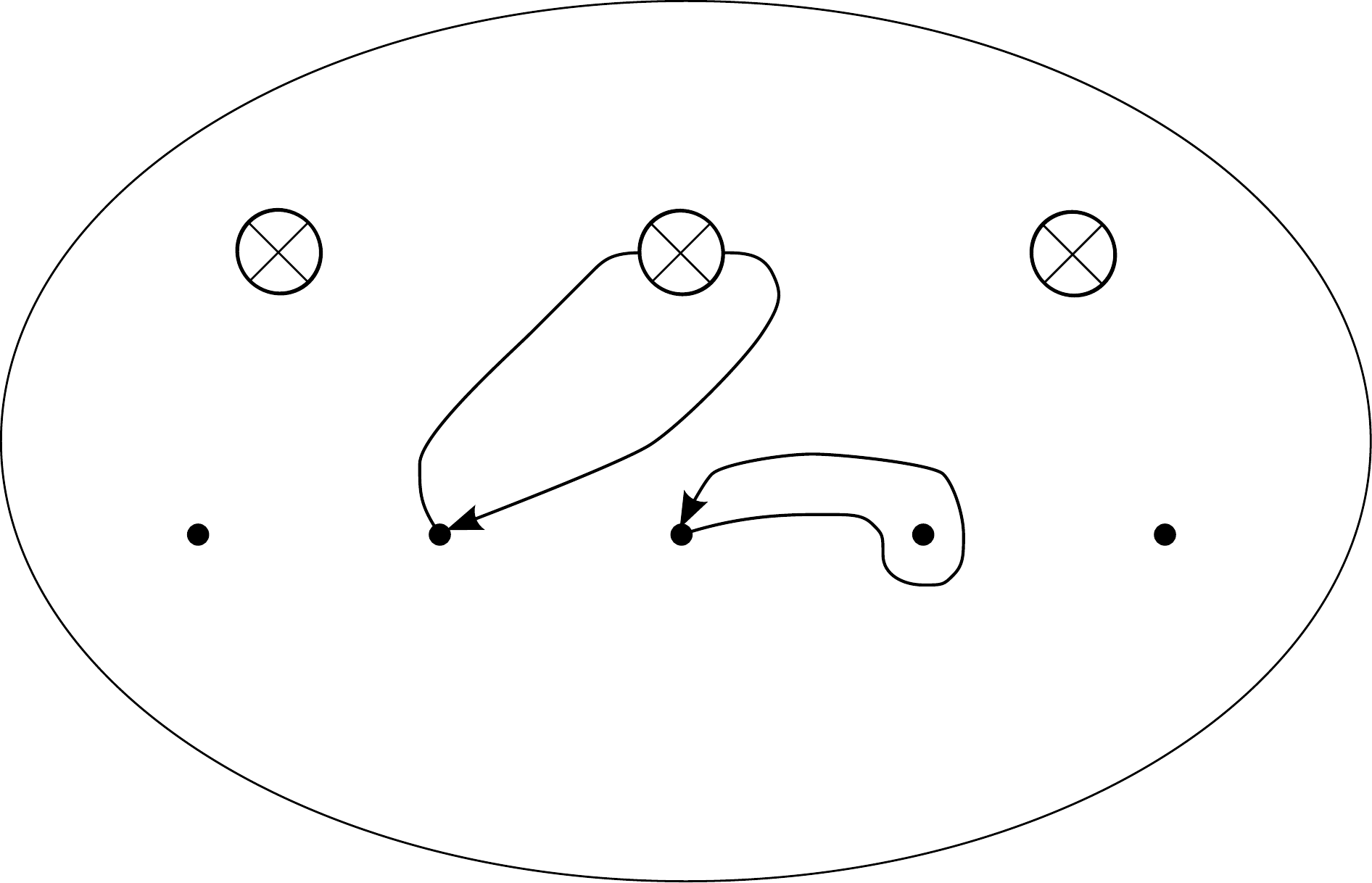}}%
    \put(0.28391153,0.45174423){\color[rgb]{0,0,0}\makebox(0,0)[lb]{\smash{.  .  .}}}%
    \put(0.57759728,0.45174423){\color[rgb]{0,0,0}\makebox(0,0)[lb]{\smash{.  .  .}}}%
    \put(0.20024017,0.25052588){\color[rgb]{0,0,0}\makebox(0,0)[lb]{\smash{. . .}}}%
    \put(0.37376195,0.25059857){\color[rgb]{0,0,0}\makebox(0,0)[lb]{\smash{. . .}}}%
    \put(0.54822871,0.24987161){\color[rgb]{0,0,0}\makebox(0,0)[lb]{\smash{. . .}}}%
    \put(0.72633019,0.24987161){\color[rgb]{0,0,0}\makebox(0,0)[lb]{\smash{. . .}}}%
    \put(0.22517438,0.32692778){\color[rgb]{0,0,0}\makebox(0,0)[lb]{\smash{$\rho_{k,l}$}}}%
    \put(0.18846366,0.51048138){\color[rgb]{0,0,0}\makebox(0,0)[lb]{\smash{$1$}}}%
    \put(0.48949156,0.51048138){\color[rgb]{0,0,0}\makebox(0,0)[lb]{\smash{$l$}}}%
    \put(0.76849302,0.51048138){\color[rgb]{0,0,0}\makebox(0,0)[lb]{\smash{$g$}}}%
    \put(0.12972651,0.1800849){\color[rgb]{0,0,0}\makebox(0,0)[lb]{\smash{$1$}}}%
    \put(0.30593796,0.1800849){\color[rgb]{0,0,0}\makebox(0,0)[lb]{\smash{$k$}}}%
    \put(0.48214941,0.1800849){\color[rgb]{0,0,0}\makebox(0,0)[lb]{\smash{$i$}}}%
    \put(0.65836087,0.1800849){\color[rgb]{0,0,0}\makebox(0,0)[lb]{\smash{$j$}}}%
    \put(0.83457232,0.1800849){\color[rgb]{0,0,0}\makebox(0,0)[lb]{\smash{$n$}}}%
    \put(0.67304515,0.31224349){\color[rgb]{0,0,0}\makebox(0,0)[lb]{\smash{$B_{i,j}$}}}%
\end{picture}
}
\newcommand{\identityE}{%
\begin{picture}(1,0.35406927)%
    \put(0,0){\includegraphics[width=\unitlength]{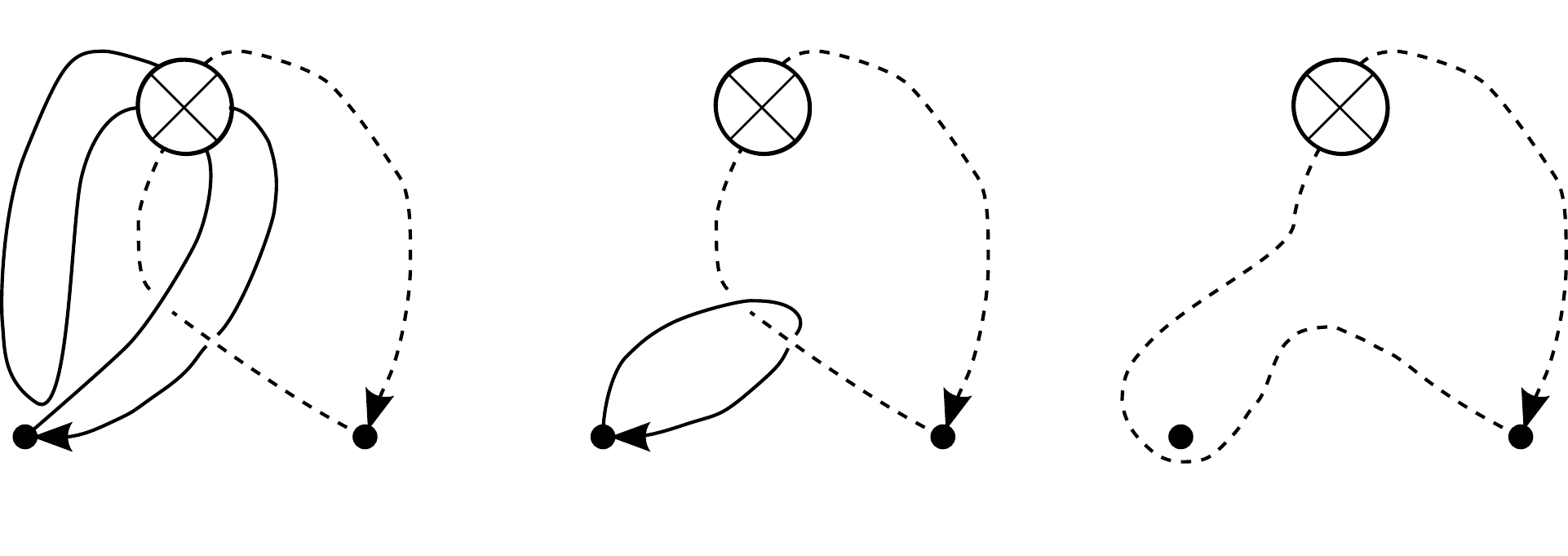}}%
    \put(0.80378153,0.00340892){\color[rgb]{0,0,0}\makebox(0,0)[lb]{\smash{$B_{i,j}^{-1}\rho_{j,k}$}}}%
    \put(0.03769835,0.00340892){\color[rgb]{0,0,0}\makebox(0,0)[lb]{\smash{$\rho_{i,k}^{-1}\rho_{j,k}\rho_{i,k}$}}}%
    \put(0.00156236,0.03954492){\color[rgb]{0,0,0}\makebox(0,0)[lb]{\smash{$i$}}}%
    \put(0.21837835,0.03954492){\color[rgb]{0,0,0}\makebox(0,0)[lb]{\smash{$j$}}}%
    \put(0.09551595,0.07568092){\color[rgb]{0,0,0}\makebox(0,0)[lb]{\smash{. . .}}}%
    \put(0.37014954,0.03954492){\color[rgb]{0,0,0}\makebox(0,0)[lb]{\smash{$i$}}}%
    \put(0.58696553,0.03954492){\color[rgb]{0,0,0}\makebox(0,0)[lb]{\smash{$j$}}}%
    \put(0.46410314,0.07568092){\color[rgb]{0,0,0}\makebox(0,0)[lb]{\smash{. . .}}}%
    \put(0.32678634,0.16963452){\color[rgb]{0,0,0}\makebox(0,0)[lb]{\smash{=}}}%
    \put(0.73873673,0.03954492){\color[rgb]{0,0,0}\makebox(0,0)[lb]{\smash{$i$}}}%
    \put(0.95555272,0.03954492){\color[rgb]{0,0,0}\makebox(0,0)[lb]{\smash{$j$}}}%
    \put(0.83269033,0.07568092){\color[rgb]{0,0,0}\makebox(0,0)[lb]{\smash{. . .}}}%
    \put(0.66646473,0.16963452){\color[rgb]{0,0,0}\makebox(0,0)[lb]{\smash{=}}}%
    \put(0.10997035,0.34308731){\color[rgb]{0,0,0}\makebox(0,0)[lb]{\smash{$k$}}}%
    \put(0.47133034,0.34308731){\color[rgb]{0,0,0}\makebox(0,0)[lb]{\smash{$k$}}}%
    \put(0.83558121,0.34308731){\color[rgb]{0,0,0}\makebox(0,0)[lb]{\smash{$k$}}}%
\end{picture}
}
\newcommand{\identityG}{%
\begin{picture}(1,0.64335536)%
    \put(0,0){\includegraphics[width=\unitlength]{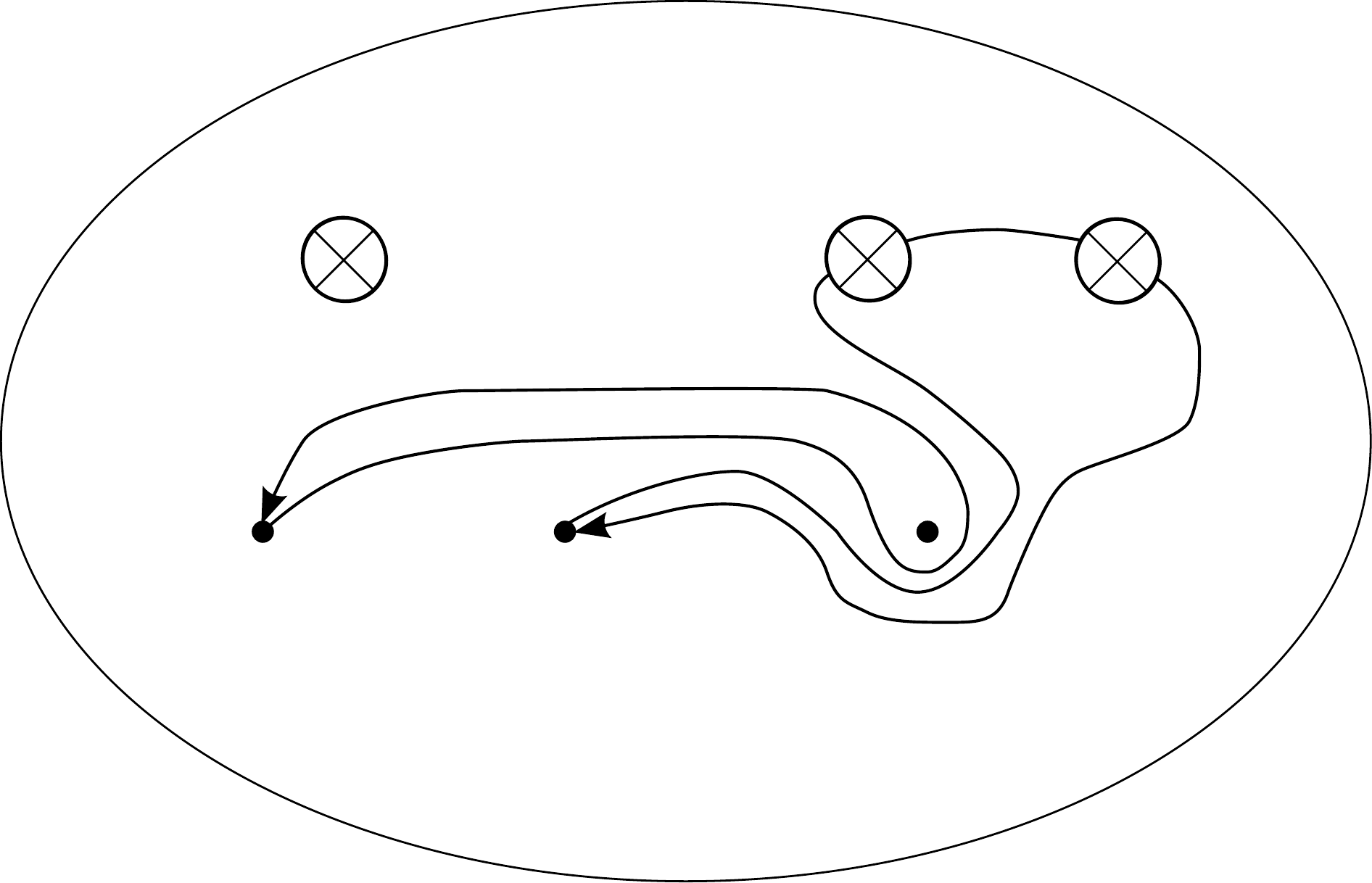}}%
    \put(0.66069427,0.15994446){\color[rgb]{0,0,0}\makebox(0,0)[lb]{\smash{$k$}}}%
    \put(0.176926,0.16749835){\color[rgb]{0,0,0}\makebox(0,0)[lb]{\smash{$i$}}}%
    \put(0.39719031,0.16602992){\color[rgb]{0,0,0}\makebox(0,0)[lb]{\smash{$j$}}}%
    \put(0.27237387,0.25560408){\color[rgb]{0,0,0}\makebox(0,0)[lb]{\smash{. . .}}}%
    \put(0.49263818,0.25560408){\color[rgb]{0,0,0}\makebox(0,0)[lb]{\smash{. . .}}}%
    \put(0.77163965,0.25560408){\color[rgb]{0,0,0}\makebox(0,0)[lb]{\smash{. . .}}}%
    \put(0.61011249,0.50523697){\color[rgb]{0,0,0}\makebox(0,0)[lb]{\smash{$g-1$}}}%
    \put(0.80100823,0.50523697){\color[rgb]{0,0,0}\makebox(0,0)[lb]{\smash{$g$}}}%
    \put(0.41921674,0.38042052){\color[rgb]{0,0,0}\makebox(0,0)[lb]{\smash{$B_{i,k}$}}}%
    \put(0.23881316,0.50523697){\color[rgb]{0,0,0}\makebox(0,0)[lb]{\smash{$1$}}}%
    \put(0.66150749,0.38042052){\color[rgb]{0,0,0}\makebox(0,0)[lb]{\smash{$a_{k}a_{j}a_{k}^{-1}$}}}%
    \put(0.08882027,0.25560408){\color[rgb]{0,0,0}\makebox(0,0)[lb]{\smash{. . .}}}%
    \put(0.38984817,0.44649982){\color[rgb]{0,0,0}\makebox(0,0)[lb]{\smash{. . .}}}%
\end{picture}
}
\newcommand{\identityK}{%
\begin{picture}(1,0.64335536)%
    \put(0,0){\includegraphics[width=\unitlength]{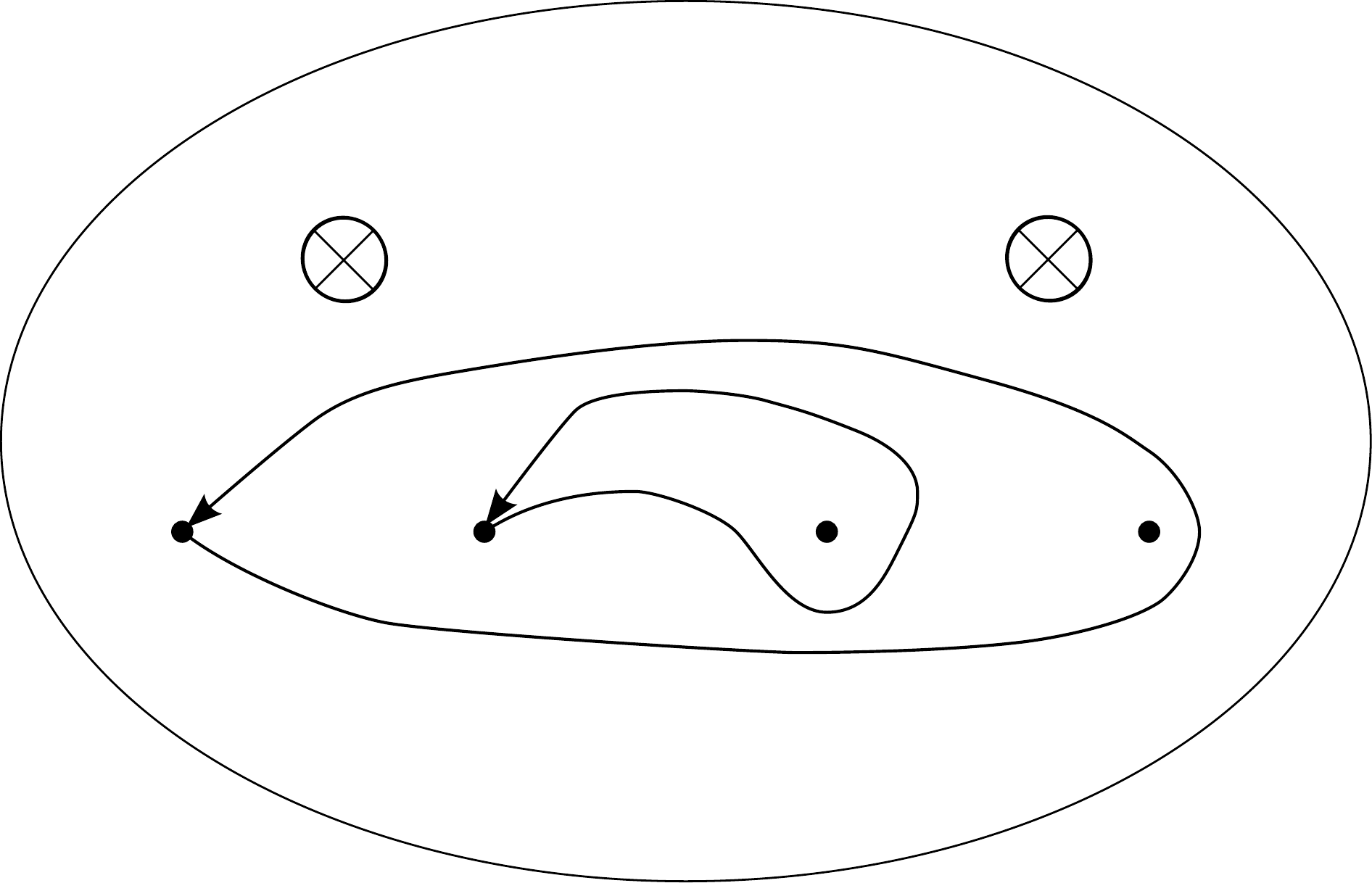}}%
    \put(0.58808605,0.21889336){\color[rgb]{0,0,0}\makebox(0,0)[lb]{\smash{$k$}}}%
    \put(0.11818884,0.21889336){\color[rgb]{0,0,0}\makebox(0,0)[lb]{\smash{$1$}}}%
    \put(0.33845316,0.21889336){\color[rgb]{0,0,0}\makebox(0,0)[lb]{\smash{$j$}}}%
    \put(0.74847569,0.50523697){\color[rgb]{0,0,0}\makebox(0,0)[lb]{\smash{$g$}}}%
    \put(0.4984346,0.28980299){\color[rgb]{0,0,0}\makebox(0,0)[lb]{\smash{$B_{j,k}$}}}%
    \put(0.23881316,0.50523697){\color[rgb]{0,0,0}\makebox(0,0)[lb]{\smash{$1$}}}%
    \put(0.46326961,0.44649982){\color[rgb]{0,0,0}\makebox(0,0)[lb]{\smash{. . .}}}%
    \put(0.21363672,0.25560408){\color[rgb]{0,0,0}\makebox(0,0)[lb]{\smash{. . .}}}%
    \put(0.43390103,0.25560408){\color[rgb]{0,0,0}\makebox(0,0)[lb]{\smash{. . .}}}%
    \put(0.69821821,0.25560408){\color[rgb]{0,0,0}\makebox(0,0)[lb]{\smash{. . .}}}%
    \put(0.82303466,0.21889336){\color[rgb]{0,0,0}\makebox(0,0)[lb]{\smash{$n$}}}%
    \put(0.7735718,0.3612923){\color[rgb]{0,0,0}\makebox(0,0)[lb]{\smash{$T_{1}$}}}%
\end{picture}
}
\newcommand{\GenerateursBord}{%
\begin{picture}(1,0.64335536)%
    \put(0,0){\includegraphics[width=\unitlength]{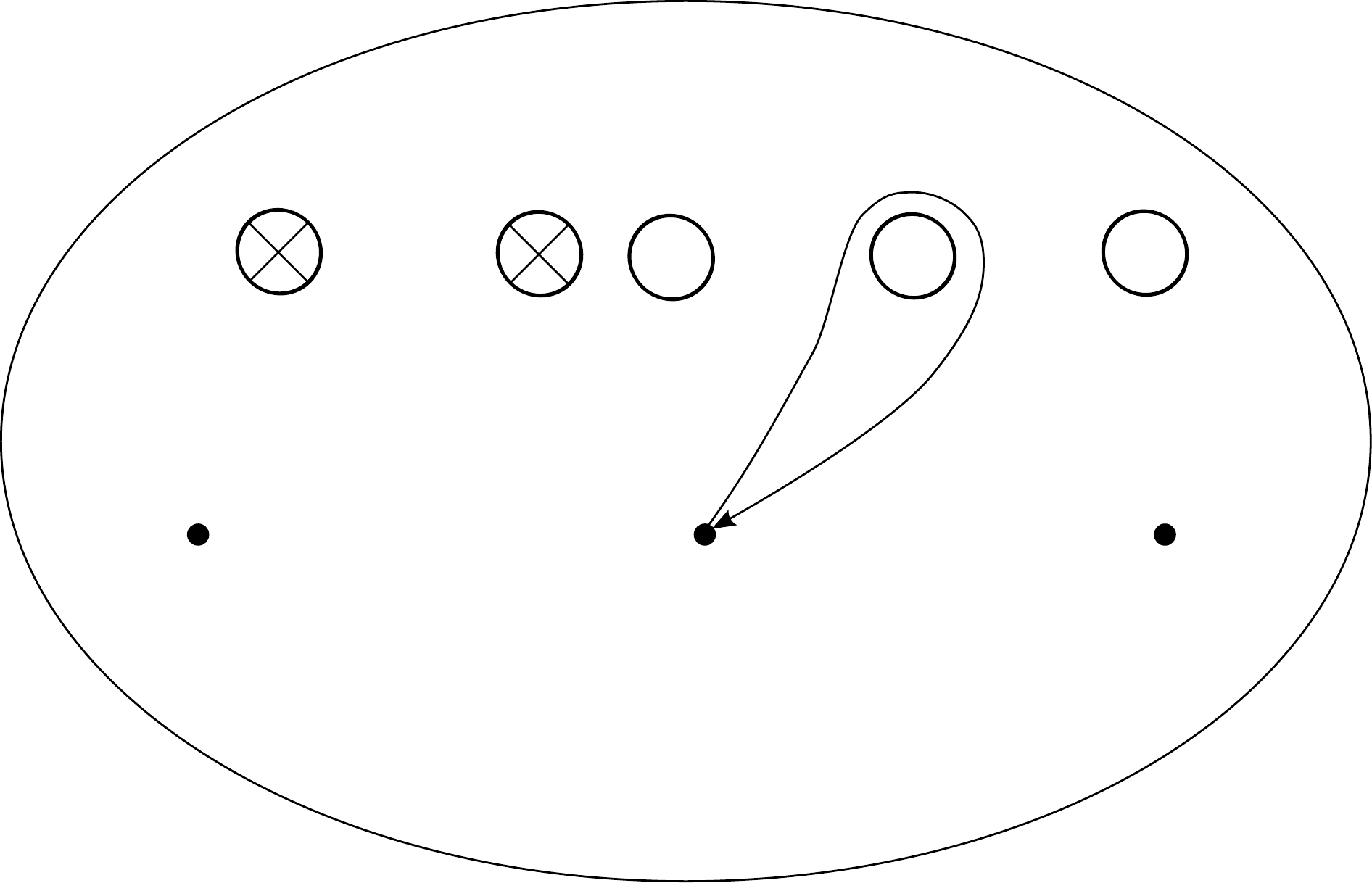}}%
    \put(0.3202935,0.25054757){\color[rgb]{0,0,0}\makebox(0,0)[lb]{\smash{. . .}}}%
    \put(0.66022378,0.24945171){\color[rgb]{0,0,0}\makebox(0,0)[lb]{\smash{. . .}}}%
    \put(0.66575029,0.33482332){\color[rgb]{0,0,0}\makebox(0,0)[lb]{\smash{$x_{k,t}$}}}%
    \put(0.18846366,0.51048138){\color[rgb]{0,0,0}\makebox(0,0)[lb]{\smash{$1$}}}%
    \put(0.37640063,0.51048138){\color[rgb]{0,0,0}\makebox(0,0)[lb]{\smash{$g$}}}%
    \put(0.13163417,0.20303605){\color[rgb]{0,0,0}\makebox(0,0)[lb]{\smash{$1$}}}%
    \put(0.50018606,0.20198281){\color[rgb]{0,0,0}\makebox(0,0)[lb]{\smash{$k$}}}%
    \put(0.54123619,0.45174423){\color[rgb]{0,0,0}\makebox(0,0)[lb]{\smash{. . .}}}%
    \put(0.72444016,0.45174423){\color[rgb]{0,0,0}\makebox(0,0)[lb]{\smash{. . .}}}%
    \put(0.47480727,0.51048138){\color[rgb]{0,0,0}\makebox(0,0)[lb]{\smash{$1$}}}%
    \put(0.65292636,0.50992798){\color[rgb]{0,0,0}\makebox(0,0)[lb]{\smash{$t$}}}%
    \put(0.78870014,0.51048138){\color[rgb]{0,0,0}\makebox(0,0)[lb]{\smash{$b-1$}}}%
    \put(0.26922724,0.45174423){\color[rgb]{0,0,0}\makebox(0,0)[lb]{\smash{. . .}}}%
    \put(0.83593499,0.20389049){\color[rgb]{0,0,0}\makebox(0,0)[lb]{\smash{$n$}}}%
\end{picture}
}
\title{On $p$-almost direct products and residual properties of pure braid groups of nonorientable surfaces}
\author{Paolo Bellingeri and Sylvain Gervais}
\date{\empty}
\begin{document}

\maketitle

\begin{abstract}
We prove that the $n^{\text{th}}$ pure braid group of  
a nonorientable surface (closed or with boundary, but different from $\R\mathrm{P}^{2}$) is residually 2-finite. Consequently, this 
group is residually nilpotent. The key ingredient in the closed case is the notion of $p$-almost direct product,
which is a generalization of the notion of almost direct product. We prove  therefore also some results on lower central series
and augmentation ideals of $p$-almost direct products.
\end{abstract}

\begingroup
 \renewcommand{\thefootnote}{}%Removing the footnote symbol.
 \footnotetext{Date: 17th November 2014\\
 \indent 2010 AMS Mathematics Subject Classification: 20F36, 20F14, 20D15, 57M05\\
 \indent Key words: braid groups, pure braid groups, nonorientable surfaces, mod $p$ Torelli group, 
 $p$-almost direct product, lower central series, residually nilpotent, residually 2-finite}
 \endgroup

%%%%%%%%%%%%%%%%%%%%%%%%%%%%%%%%%%%%%%%%%%%%%%%%%%%%%%%%%%%%%
%%%%%%%%%%%%%%%%%%%%%%%%%%%%%%%%%%%%%%%%%%%%%%%%%%%%%%%%%%%%%
%%%%%%%%%%%%%%%%%%%%%%%%%%%%%%%%%%%%%%%%%%%%%%%%%%%%%%%%%%%%%
%%%%%%%%%%%%%%%%%%%%%%%%%%%%%%%%%%%%%%%%%%%%%%%%%%%%%%%%%%%%%
%%%%%%%%%%%%%%%%%%%%%%%%%%%%%%%%%%%%%%%%%%%%%%%%%%%%%%%%%%%%%

\section{Introduction}
Let $M$ be a surface (orientable or not, possibly with boundary) and 
$F_{n}(M)=\{(x_{1},\ldots,x_{n})\in M^{n}\,/\,x_{i}\neq 
x_{j}\text{ for }i\neq j\}$ its $n^{\text{th}}$ configuration space. 
The fundamental group $\pi_{1}(F_{n}(M))$ is 
called the $n^{\text{th}}$ \emph{pure braid group} of $M$ and shall be
denoted by $P_{n}(M)$.  

The mapping class group $\Gamma(M)$ of 
$M$ is  the group of isotopy classes of homeomorphisms 
$h:M\to M$ which are identity on the boundary. Let 
$\mathcal{X}_{n}=\{z_{1},\ldots,z_{n}\}$ a set of $n$ 
distinguished points in the  interior of $M$;
the pure  mapping class group 
$\mathrm{P}\Gamma(M, \mathcal{X}_n)$ relatively to $\mathcal{X}_{n}$ is 
the  group  of the isotopy classes of  homeomorphisms $h:M\to M$ satisfying 
$h(z_{i})=z_{i}$ for all $i$: since this group does not depend on the choice of the set  $\mathcal{X}_n$ but only on its cardinality
we can write $\mathrm{P}_n\Gamma(M)$ instead of $\mathrm{P}\Gamma(M, \mathcal{X}_n)$. Forgetting the 
marked points, we get a morphism $\mathrm{P}_{n}\Gamma(M)\to\Gamma(M)$ whose kernel is known to be 
isomorphic to $P_{n}(M)$ when $M$ is not a sphere, a torus, a projective plane or a Klein  bottle (see~\cite{Sc,GJ}).

\medskip

Now, recall that if $\mathcal{P}$ is a group-theoretic property, then 
a group $G$ is said to be \emph{residually} $\mathcal{P}$ if, for all $g\in 
G$, $g\neq 1$, there exists a group homomorphism $\varphi:G\to H$ 
such that $H$ is $\mathcal{P}$ and $\varphi(g)\neq1$. We are 
interested in this paper to the following properties: nilpotence, 
being free and being a finite $p$-group for a prime number $p$ 
(mostly $p=2$). Recall 
that, if for subgroups $H$ and $K$ of $G$, $[H,K]$ is the subgroup 
generated by $\{[h,k]\,/\,(h,k)\in H\times K\}$ where 
$[h,k]=h^{-1}k^{-1}hk$, the lower central 
series of $G$, $(\Gamma_{k}G)_{k\geq 1}$,   is defined inductively by 
$\Gamma_{1}G=G$ and $\Gamma_{k+1}G=[G,\Gamma_{k}G]$. It is well known that $G$ is residually nilpotent if, 
and only if, $\build{\bigcap}{k=1}{+\infty}\Gamma_{k}G=\{1\}$. 
From  the lower central series of
$G$ one can define  another  filtration $D_1(G) \supseteq  D_2(G)
\supseteq \ldots$  setting $D_1(G)=G$, and for $i\geq 2$,
defining $D_i(G)=\{ \, x \in G \, | \,\exists n\in\N^{*}, x^n \in 
\Gamma_{i}(G)\,\}$. After Garoufalidis and Levine \cite{GLe}, this filtration is
called  \emph{rational lower central series} of $G$
and a  group $G$
is residually torsion-free nilpotent if, and only if,
$\build{\bigcap}{i=1}{\infty}D_i(G)=\{ 1\}$.

\smallskip

When $M$ is an orientable surface of positive genus (possibly with boundary) or a disc with 
holes, it is proved in~\cite{BGG} and~\cite{BB} that $P_{n}(M)$ is 
residually torsion-free nilpotent for all $n\geq 1$.  The fact that a group is residually torsion-free nilpotent has several
important consequences, notably that the group is bi-orderable~\cite{MR}
and
 residually $p$-finite~\cite{Gr}. The goal of this 
article is to study the nonorientable case and, more precisely, to 
prove the following:

\begin{thm}\label{thm:nonorientable}
   The $n^{\text{th}}$ pure braid group of a nonorientable surface 
   different from $\R\mathrm{P}^{2}$ is residually $2$-finite.
\end{thm}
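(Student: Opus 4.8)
The plan is to build $P_n(M)$ out of free groups and a single surface group by means of the Fadell--Neuwirth fibrations, and to control the way these pieces are assembled using the notion of $p$-almost direct product with $p=2$. Throughout I would use that a group is residually $2$-finite if and only if the intersection of the terms of its mod-$2$ lower central series is trivial, together with the elementary facts that free groups are residually $2$-finite and that the relevant surface groups are as well.

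First I would treat the case where $M$ has nonempty boundary. Here $M$ retracts onto a wedge of circles, so $\pi_1(M)$ is free, and the Fadell--Neuwirth fibration $F_k(M)\to F_{k-1}(M)$ has fibre $M$ minus $k-1$ points, again a surface with free fundamental group. Iterating, $P_n(M)$ becomes an iterated extension of free groups. The delicate point is that the monodromy of these fibrations need not act trivially on the homology of the fibre: an orientation-reversing loop acts by $-1$ on a genus class. Since $-1\equiv 1\pmod 2$, however, the induced action on $H_1(\text{fibre};\Z/2)$ is trivial, so each extension is a $2$-almost direct product. The results on lower central series of $p$-almost direct products then give that an iterated $2$-almost direct product of residually $2$-finite groups is residually $2$-finite, which settles this case.

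For the closed case I would use the fibration obtained by forgetting all but one of the marked points, giving the short exact sequence
\[
1 \longrightarrow P_{n-1}(M\smallsetminus\{x_1\}) \longrightarrow P_n(M) \longrightarrow \pi_1(M) \longrightarrow 1 .
\]
The fibre is the pure braid group of a surface with one puncture, hence residually $2$-finite by the boundary case. The base $\pi_1(M)$ is the fundamental group of a closed nonorientable surface of genus $g\geq 2$ (genus $1$ being exactly the excluded $\R\mathrm{P}^2$), and one checks directly that such a surface group is residually $2$-finite. Since inner automorphisms act trivially on abelianisation, the conjugation action of $P_n(M)$ on the homology of the fibre factors through $\pi_1(M)$, and the same sign argument as above shows this action is trivial on $H_1(\text{fibre};\Z/2)$; thus the sequence is a $2$-almost direct product. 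Applying the general result on $p$-almost direct products with $p=2$ to this extension yields that $P_n(M)$ is residually $2$-finite.

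The main obstacle is the closed case, and more precisely the passage through a \emph{non-split} extension: the classical theory of almost direct products in the spirit of Falk and Randell is tailored to semidirect products and to the integral lower central series, neither of which is available here. The heart of the work is therefore to develop the mod-$2$ analogue --- describing how the mod-$2$ lower central series and the mod-$2$ augmentation ideal of a $p$-almost direct product are built from those of the kernel and the quotient --- in sufficient generality to cover extensions that do not split and quotients that are not free. Verifying the $2$-almost direct product condition geometrically (the triviality mod $2$ of the monodromy on the homology of the fibre) is the other point demanding care, although it reduces to the elementary sign computation indicated above.
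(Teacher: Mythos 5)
Your argument for the case with boundary contains a genuine error, and since your closed case quotes it (the fibre is declared residually $2$-finite ``by the boundary case''), the whole proposal collapses at that point. The Fadell--Neuwirth extensions of a nonorientable surface of positive genus with boundary are \emph{not} $2$-almost direct products: the failure of the monodromy to act trivially on the homology of the fibre is not a matter of signs, but of additive puncture-class corrections which survive reduction mod $2$. Concretely, with the presentation of Theorem~\ref{thm:PresentationCasBord}, the fibre of $F_{n+1}(N_{g,b})\to F_{n}(N_{g,b})$ is free on the classes $B_{i,n+1}$, $\rho_{n+1,l}$, $x_{n+1,t}$, and relation $(\textnormal{b}_{2})$ gives
\[
\rho_{i,k}\,\rho_{n+1,k}\,\rho_{i,k}^{-1}=\rho_{n+1,k}^{-1}B_{i,n+1}^{-1}\rho_{n+1,k}^{2},
\]
so in $H_{1}(\text{fibre};\FF_{2})$ the class of $\rho_{n+1,k}$ is sent to $[\rho_{n+1,k}]+[B_{i,n+1}]\neq[\rho_{n+1,k}]$, because $[B_{i,n+1}]$ is a basis element of the homology of this free group. (Your sign argument does apply to relation $(\textnormal{d}_{2})$, where the correction is exactly an inversion, but it misses $(\textnormal{b}_{2})$.) This is precisely why the paper does not use Fadell--Neuwirth fibrations at all in the boundary case: instead, $P_{n}(N_{g,b})$ is embedded, via the blowup homomorphism, into the mod $2$ Torelli group $I_{2}(N_{g+n+2(b-1),1})$ (Proposition~\ref{prop:PureBraidInTorelli}), which is residually $2$-finite by a theorem of Paris (Theorem~\ref{thm:P}).

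The closed case has two further problems. First, the ``same sign argument'' cannot establish the $2$-almost direct product property there either; what actually makes the action of $\pi_{1}(N_{g})$ trivial on $H_{1}(P_{n-1}(N_{g,1});\FF_{2})$ is the closed-surface relation $(\textnormal{c})$, which permits rewriting the offending correction term $B_{1,j}^{-1}$ as $B_{2,j}\cdots B_{j-1,j}B_{j,j+1}\cdots B_{j,n}\rho_{j,g}^{-2}\cdots\rho_{j,1}^{-2}$, a product of commutators and squares \emph{inside the fibre} --- this is the computation of Proposition~\ref{prop:commutator}, and it is exactly the step that has no analogue when the surface has boundary and that forces $p=2$. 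Second, your structural claim that the extension is non-split, and that the heart of the matter is a theory of $p$-almost direct products for non-split extensions, is wrong on both counts: the sequence does split for $g\geq 2$ (Gon\c{c}alves--Guaschi; the paper exhibits an explicit algebraic section in Proposition~\ref{prop:section}), and the theory of Section~3 is stated and proved only for split sequences --- the proof of Theorem~\ref{thm:suite exacte} rests on the retraction $\tau(b)=\big(\sigma\lambda(b)\big)^{-1}b$, which does not exist without a section. You neither prove nor cite a non-split version, so even granting the $2$-almost direct product property, your final step is unsupported.
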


In the case of $P_n(\R\mathrm{P}^{2})$ we give some partial results at the end of Section 4.
Since a finite $2$-group is nilpotent, a residually $2$-finite group 
is residually nilpotent. Thus, we have

\begin{cor}
   The $n^{\text{th}}$ pure braid group of a nonorientable surface 
   different from $\R\mathrm{P}^{2}$ is residually nilpotent. 
\end{cor}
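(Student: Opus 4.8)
The plan is to deduce this immediately from Theorem~\ref{thm:nonorientable} together with the elementary fact that every finite $p$-group is nilpotent. Write $G=P_{n}(M)$ for $M$ a nonorientable surface different from $\R\mathrm{P}^{2}$. By Theorem~\ref{thm:nonorientable}, $G$ is residually $2$-finite: for each $g\in G$ with $g\neq 1$ there is a homomorphism $\varphi\colon G\to H$ with $H$ a finite $2$-group and $\varphi(g)\neq 1$.

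The only group-theoretic input needed is that a finite $2$-group is nilpotent. I would recall the standard argument: a nontrivial finite $p$-group has nontrivial centre (by the class equation, since every noncentral conjugacy class has order a positive power of $p$, the fixed-point count forces $Z(H)\neq\{1\}$), and then induct on the order by passing to $H/Z(H)$, which is again a finite $p$-group of strictly smaller order. This shows that the upper central series of $H$ reaches $H$ after finitely many steps, so $H$ is nilpotent.

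Combining the two facts, for any $g\neq 1$ in $G$ the homomorphism $\varphi\colon G\to H$ furnished by residual $2$-finiteness has nilpotent target and satisfies $\varphi(g)\neq 1$; this is exactly the witness required for residual nilpotence. Since $g$ was arbitrary, $G$ is residually nilpotent. There is no genuine obstacle here: all the depth sits in Theorem~\ref{thm:nonorientable}, and what remains is merely the formal implication that being residually $\mathcal{P}$ entails being residually $\mathcal{Q}$ whenever the class $\mathcal{P}$ is contained in the class $\mathcal{Q}$---applied here to the inclusion of finite $2$-groups into nilpotent groups.
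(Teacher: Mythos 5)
Your proof is correct and follows exactly the paper's own route: the paper derives this corollary in one line from Theorem~\ref{thm:nonorientable} together with the fact that a finite $2$-group is nilpotent, which is precisely your argument (you merely spell out the standard class-equation proof of nilpotence of finite $p$-groups, which the paper takes for granted).
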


Remark that in \cite{Go} it was shown that the $n^{\text{th}}$ pure braid group of a nonorientable surface 
is not bi-orderable and therefore  it is not residually torsion-free nilpotent.  
Let us notice also that if pure braid groups of nonorientable surfaces with boundary are residually $p$ for  a prime $p\not=2$ 
therefore pure braid groups of nonorientable  closed surfaces  are also residually $p$ (Remark \ref{rem:p}); however since
the technique proposed in the nonorientable case applies only for $p=2$,  the question if 
pure braid groups of  nonorientable surfaces  are residually $p$ for $p\not=2$ is still  open 
(recall that there are groups residually $p$ for infinitely many primes $p$  which are not 
residually torsion-free nilpotent, see \cite{H}).

\medskip

Remark that one can  prove that finite type invariants separate classical braids using the fact that 
the pure braid group  $P_n$ is residually nilpotent without torsion 
(see \cite{Pa}). Moreover using above residual properties it is possible to construct 
algebraically  a universal finite type invariant over $\Z$ on the 
classical braid group $B_n$ (see \cite{Pa}).
Similar constructions were afterwards proposed for braids on orientable surfaces
(see \cite{BF,GP}): in a further paper we will explore   the relevance of Theorem \ref{thm:nonorientable}
in the realm of finite type invariants over $\Z / 2\Z$ for braids on non orientable surfaces.

\medskip

From now on, $M=N_{g,b}$ is a nonorientable surface of genus $g$ with 
$b$ boundary components, simply denoted by $N_{g}$ when $b=0$. We will
see $N_{g}$ as a sphere $S^{2}$ with $g$ open discs removed and $g$ 
M\"{o}bius strips glued on each circle  (see 
figure~\ref{fig:generators} where  each crossed disc represents a 
M\"{o}bius strip). The surface $N_{g,b}$ is obtained from $N_{g}$ by 
removing $b$ open discs. The mapping class groups $\Gamma(N_{g,b})$ and pure 
mapping class group $P_{n}\Gamma(N_{g,b})$ will be denoted respectively 
$\Gamma_{g,b}$ and $\Gamma_{g,b}^{n}$.

\medskip

The paper is organized as follows. In Section 2, we prove  Theorem \ref{thm:nonorientable} for 
surfaces with boundary: following what the authors did in the 
orientable case (see~\cite{BGG}), we embed $P_{n}(N_{g,b})$ in a 
Torelli group. The difference  here is that  we must consider mod 2 
Torelli groups.  In Section 3 we introduce the notion of $p$-almost direct product, which generalizes  the notion of almost direct product (see Definition \ref{deft:main})
and we prove some results on lower central series and augmentations ideals of $p$-almost direct products (Theorems \ref{thm:suite exacte} and \ref{thm:augmentation})
that can be compared with similar results on almost direct products (Theorem 3.1 in \cite{FR} and Theorem 3.1 in \cite{Pa}). 

In Section 4, the existence of a split exact sequence
\[
\xymatrix{
1 \ar[r] & P_{n-1}(N_{g,1})\ar[r] & P_{n}(N_{g})\ar[r] & \pi_{1}(N_{g})\ar[r] & 1}
\]
and results from Section 2 and 3 are used to prove Theorem 1 in the closed 
case (Theorem \ref{thm:closed}). The method is similar to the one developed for 
orientable surface in~\cite{BB}: the difference will be that  the 
semi-direct product $P_{n-1}(N_{g,1})\rtimes\pi_{1}(N_{g})$   is a 
$2$-almost-direct  product (and not an almost-direct product as in 
the case of closed oriented surfaces).
 
 For proving the main result of the paper, we will also need a group presentation for 
$P_{n}(N_{g,b})$  when $b\geq 1$. Although generators  of this group
seem to be known, we could not find a group presentation  in the literature. Thus, we 
give one in the Appendix (Theorem \ref{thm:PresentationCasBord}).

\paragraph{Acknowledgments.}

The research of the first author was partially supported   by  French grant ANR-11-JS01-002-01.
The authors are grateful to Carolina de Miranda e Pereiro and John Guaschi  for useful discussions and comments.

%%%%%%%%%%%%%%%%%%%%%%%%%%%%%%%%%%%%%%%%%%%%%%%%%%%%%%%%%%%%%%
%%%%%%%%%%%%%%%%%%%%%%%%%%%%%%%%%%%%%%%%%%%%%%%%%%%%%%%%%%%%%%
%%%%%%%%%%%%%%%%%%%%%%%%%%%%%%%%%%%%%%%%%%%%%%%%%%%%%%%%%%%%%%
%%%%%%%%%%%%%%%%%%%%%%%%%%%%%%%%%%%%%%%%%%%%%%%%%%%%%%%%%%%%%%
%%%%%%%%%%%%%%%%%%%%%%%%%%%%%%%%%%%%%%%%%%%%%%%%%%%%%%%%%%%%%%
%%%%%%%%%%%%%%%%%%%%%%%%%%%%%%%%%%%%%%%%%%%%%%%%%%%%%%%%%%%%%%
%%%%%%%%%%%%%%%%%%%%%%%%%%%%%%%%%%%%%%%%%%%%%%%%%%%%%%%%%%%%%%
%%%%%%%%%%%%%%%%%%%%%%%%%%%%%%%%%%%%%%%%%%%%%%%%%%%%%%%%%%%%%%

\section{The case of non-empty boundary}\label{par:cas a bord}

In this section, $N=N_{g,b}$ is a nonorientable surface of genus 
$g\geq 1$ with boundary (\emph{ie} $b\geq 1$). In this case, one has 
$P_{n}(N)=\ker{\Gamma_{g,b}^{n}\to\Gamma_{g,b}}$ for all $n\geq 1$.

\subsection{Notations}
We will follow notations from \cite{PS}.
A simple closed curve in $N$ is an embedding $\alpha:S^{1}\to 
N\setminus\partial N$. Such a curve 
is said two-sided (resp. one-sided) if it admits a regular neighborhood homeomorphic to an annulus 
(resp. a M\"{o}bius strip). We shall consider the 
following elements in $\Gamma_{g,b}$.
\begin{list}{$\bullet$}{\leftmargin0mm\labelwidth0mm\labelsep2mm\itemindent3mm}
    \item If $\alpha$ is a two-sided simple closed curve in $N$, $\tau_{\alpha}$ is a 
    Dehn twist along $\alpha$.
    \item Let $\mu$ and $\alpha$ be two simple closed curves such that $\mu$ is 
    one-sided, $\alpha$ is two-sided and $|\alpha\cap\mu|=1$. A regular neighborhood $K$ (resp. 
    $M$) of $\alpha\cup\mu$ (resp. $\mu$) is diffeomorphic to a Klein bottle with one hole 
    (resp. a M\"{o}bius strip). Pushing $M$ once along $\alpha$, we get a diffeomorphism of $K$ 
    fixing the boundary: it can be extended via the identity to $N$. Such a 
    diffeomorphism is called a crosscap slide, and denoted by $Y_{\mu,\alpha}$.
    \item Consider a one-sided simple closed curve $\mu$ containing exactly one marked point $z_{i}$. Sliding $z_{i}$ once along $\mu$, we get a 
    diffeomorphism $S_{\mu}$ of $N$ which is identity outside a regular neighborhood of $\mu$. Such a 
    diffeomorphism will be called puncture slide along $\mu$.
\end{list}

\subsection{Blowup homomorphism}

In this section, we recall the construction of the Blowup 
homomorphism $\eta_{g,b}^{n} :\Gamma_{g,b}^{n}\to\Gamma_{g+n,b}$ 
given in~\cite{Sz1},~\cite{Sz2} and~\cite{PS}.

\medskip
Let $U=\{z\in\C\,/\,|z|\leq 1\}$ and, for $i=1,\ldots,n$, fix an 
embedding $e_{i}:U\to N$ such that $e_{i}(0)=z_{i}$, 
$e_{i}(U)\cap e_{j}(U)=\emptyset$ if $i\neq j$ and 
$e_{i}(U)\cap\partial N=\emptyset$ for all $i$. If we remove the 
interior of each $e_{i}(U)$ (thus getting the surface $N_{g,b+n}$) and identify, for each $z\in\partial U$, 
$e_{i}(z)$ with $e_{i}(-z)$, we get a nonorientable surface of genus 
$g+n$ with $b$ boundary components, that is to say a surface 
homeomorphic to $N_{g+n,b}$. Let us denote by 
$\gamma_{i}=e_{i}(S^{1})$ the boundary of $e_{i}(U)$, and by 
$\mu_{i}$ its image in $N_{g+n,b}$ : it is a one-sided simple closed 
curve.

\medskip
Now, let $h$ be an element of $\Gamma_{g,b}^{n}$. It can be 
represented by a homeomorphism $N_{g,b}\to N_{g,b}$, still 
denoted $h$, such that
\begin{list}{(\arabic{liste})}{\usecounter{liste}\leftmargin20mm}
    \item $h\big(e_{i}(z)\big)=e_{i}(z)$ if $h$ preserves local 
    orientation at $z_{i}$;
    \item $h\big(e_{i}(z)\big)=e_{i}(\bar{z})$ if $h$ reverses local
    orientation at $z_{i}$.
\end{list}

Such a homeomorphism $h$ commutes with the identification leading to 
$N_{g+n,b}$ and thus induces an element $\eta(h)\in\Gamma_{g+n,b}$. 
It is proved in~\cite{Sz2} that the map 
$\eta_{g,b}^{n}=\eta:\Gamma_{g,b}^{n}\to\Gamma_{g+n,b}$  which sends $h$ to $\eta(h)$ 
is well defined for $n=1$, but the proof also works for $n>1$. This 
homomorphism is called \emph{blowup} homomorphism.

\begin{prop}
    The blowup homomorphism 
    $\eta_{g,b}^{n}:\Gamma_{g,b}^{n}\to\Gamma_{g+n,b}$ is injective 
    if $(g+n,b)\neq(2,0)$.
\end{prop}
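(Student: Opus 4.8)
The plan is to show that the blowup homomorphism $\eta=\eta_{g,b}^{n}$ is injective by analyzing its kernel via the forgetful (point-forgetting) fibration structure on the pure mapping class groups, reducing the problem to the case $n=1$ which is essentially treated in~\cite{Sz2}, and then handling the passage from one marked point to several by a careful induction.

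\medskip

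First I would set up the forgetful maps. For the source, forgetting the last marked point $z_n$ gives a map $\Gamma_{g,b}^{n}\to\Gamma_{g,b}^{n-1}$, whose kernel is (a point-pushing subgroup isomorphic to) $\pi_1(N_{g,b}\setminus\{z_1,\dots,z_{n-1}\})$, a free group. On the target side, the $n$-th blown-up Möbius band $\mu_n$ and its one-sided curve give, after capping or forgetting, a corresponding map $\Gamma_{g+n,b}\to\Gamma_{g+n-1,b}$. The key geometric point is that $\eta$ is compatible with these two forgetful structures: forgetting $z_n$ downstairs corresponds to forgetting the handle (the blown-up crosscap $\mu_n$) upstairs. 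This produces a commutative ladder
\[
\xymatrix{
1 \ar[r] & K_n \ar[r]\ar[d] & \Gamma_{g,b}^{n}\ar[r]\ar[d]^{\eta} & \Gamma_{g,b}^{n-1}\ar[r]\ar[d]^{\eta} & 1\\
1 \ar[r] & L_n \ar[r] & \Gamma_{g+n,b}\ar[r] & \Gamma_{g+n-1,b}\ar[r] & 1
}
\]
where the rows are exact and the right-hand vertical map is, by induction, injective (the base case $n=1$ is the injectivity statement proved in~\cite{Sz2}). By a diagram chase, injectivity of $\eta$ then reduces to injectivity of the restriction $\eta\colon K_n\to L_n$ on the kernels.

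\medskip

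Next I would identify the two kernels explicitly and show $\eta$ restricts to an injection between them. The kernel $K_n$ is a surface braid kernel (point-pushing along $z_n$), free on the generators coming from $\pi_1$, while $L_n$ is generated by the Dehn twist $\tau$ about the two-sided curve bounding the blown-up Möbius band $\mu_n$ together with the crosscap slide $Y_{\mu_n,\alpha}$ attached at that crosscap; these are precisely the elements $\eta$ assigns to the point-pushing generators, following the local description $e_i(z)\mapsto e_i(z)$ or $e_i(\bar z)$ according to whether local orientation is preserved or reversed. The task is to verify that a nontrivial point-pushing class cannot map to the identity, which I would do by tracking its action on the fundamental group or on suitable isotopy classes of arcs in $N_{g+n,b}$: a point-pushing that becomes trivial upstairs would force the pushed loop to be null-homotopic downstairs, contradicting nontriviality in the free group $K_n$.

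\medskip

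The hard part will be the hypothesis $(g+n,b)\neq(2,0)$ and pinning down exactly where it is needed. This exclusion is the nonorientable analogue of the sporadic low-genus failures of injectivity (the Klein bottle $N_2=N_{2,0}$ has exceptional mapping-class behaviour, mirroring the sphere/torus exclusions mentioned for the Birman sequence in the Introduction), so the delicate point is the base of the induction rather than the inductive step: one must check that the $n=1$ statement from~\cite{Sz2}, and the identification of $L_n$ with the expected subgroup, genuinely hold once the target surface is not the closed Klein bottle. I expect the diagram chase and the free-group argument on kernels to be routine; the real care goes into justifying the geometric compatibility of the two forgetful fibrations (that forgetting a marked point downstairs matches forgetting a crosscap upstairs) and into confirming that the single excluded case $(g+n,b)=(2,0)$ is the only obstruction, exactly as claimed.
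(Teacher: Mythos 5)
Your proposed ladder has a fatal flaw: the bottom row does not exist. There is no group homomorphism $\Gamma_{g+n,b}\to\Gamma_{g+n-1,b}$ that ``forgets the blown-up crosscap $\mu_{n}$''. Forgetting a marked point induces a homomorphism because a homeomorphism fixing the marked points still defines a homeomorphism after the points are erased; but a general mapping class of $N_{g+n,b}$ need not preserve the isotopy class of the one-sided curve $\mu_{n}$ (it can carry it to any other one-sided curve, e.g.\ $\mu_{1}$), so it induces no homeomorphism of the blown-down surface $N_{g+n-1,b}$, and there is no candidate for the right-hand bottom arrow of your diagram. The only natural map goes the other way: the gluing homomorphism $\Gamma_{g,b+n}\to\Gamma_{g+n,b}$ obtained by attaching M\"{o}bius strips to boundary components. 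Without the bottom exact sequence the diagram chase, the identification of $L_{n}$, and the whole induction collapse. One could try to salvage the scheme by replacing $\Gamma_{g+n,b}$ with the stabilizer of the isotopy classes of $\mu_{1},\ldots,\mu_{n}$, on which a blow-down map can be defined, but proving exactness there and controlling that stabilizer is essentially the content one was trying to avoid, and it is not what you wrote.

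The paper's proof is direct, with no induction on $n$. Take a homeomorphism $h$ whose class lies in the kernel of $\eta_{g,b}^{n}$, so $\eta(h)$ is isotopic to the identity. Since $\eta(h)(\mu_{i})$ is isotopic to $\mu_{i}$ and, because $(g+n,b)\neq(2,0)$, the one-sided curve $\mu_{i}$ is \emph{not} isotopic to $\mu_{i}^{-1}$, one can isotope $h$ to be the identity on every blown-up disc $e_{i}(U)$; note that the excluded Klein-bottle case enters exactly here, for every $i$, and not as the base case of an induction as you predicted. The restriction of $h$ to the complement $N_{g,b+n}$ then lies in the kernel of the gluing map $\Gamma_{g,b+n}\to\Gamma_{g+n,b}$, which by Stukow's theorem is generated by the Dehn twists along the curves $\gamma_{i}=\partial e_{i}(U)$; each such twist becomes trivial in $\Gamma_{g,b}^{n}$ because $\gamma_{i}$ bounds a disc containing a single marked point. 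Hence $h$ is isotopic to the identity, giving injectivity. Your instincts that Birman-type point-pushing structure and Klein-bottle exceptionality are relevant were reasonable, but the actual mechanism is this blow-down-of-the-complement argument, not a ladder of forgetful fibrations.
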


\begin{rem}
This result is proved in~\cite{Sz1} for $(g,b)=(0,1)$, but the proof can be adapted in our case as follows.
\end{rem}

\begin{proof}
    Suppose that $h:N_{g,b}\to N_{g,b}$ is a homeomorphism satisfying
    $h(z_{i})=z_{i}$ for all $i$ and\linebreak[4] $\eta(h):N_{g+n,b}\to N_{g+n,b}$   
    is isotopic to identity. Then $h$ is isotopic to a map equal to 
    identity on $e_{i}(U)$ for all $i$ (otherwise, $\mu_{i}$ is 
    isotopic to $\mu_{i}^{-1}$ since  $\eta(h)(\mu_{i})$ is isotopic to 
    $\mu_{i}$) and its restriction to $N_{g,b+n}$ is an element of 
    the kernel of the natural map $\Gamma_{g,b+n}\to\Gamma_{g+n,b}$ 
    induced by glueing a M\"{o}bius strip on $n$ boundary components. 
    However, this kernel is generated by the Dehn twists along the 
    curves $\gamma_{i}$ (see~\cite[theorem~3.6]{St}).
    Now, any $\gamma_{i}$ bounds a 
    disc with one marked point in $N_{g,b}$:  the corresponding  Dehn twist is trivial 
    in $\Gamma_{g,b}$ and therefore $h$ is isotopic to identity.
\end{proof}

\subsection{Embedding $\boldsymbol{P_{n}(N_{g,b})}$ in $\boldsymbol{\Gamma_{g+n+2(b-1),1}}$}

Gluing a one-holed torus onto $b-1$ boundary components of 
$N_{g,b}$, we get $N_{g,b}$ as a subsurface of $N_{g+2(b-1),1}$. This 
inclusion induces a homomorphism 
$\chi_{g,b}:\Gamma_{g,b}\to\Gamma_{g+2(b-1),1}$ which is injective 
(see~\cite{St}). Thus, the composed map 
$\lambda_{g,b}^{n}=\chi_{g+n,b}\circ\eta_{g,b}^{n}:\Gamma_{g,b}^{n}\to\Gamma_{g+n+2(b-1),1}$ is also injective.

Recall that the mod $p$ Torelli group $I_{p}(N_{g,1})$ is the subgroup of $\Gamma_{g,1}$ defined as the 
kernel of the action of $\Gamma_{g,1}$ on $H_{1}(N_{g,1};\Z/p\Z)$.
In the following we will consider in particular the case of the mod 2 Torelli group $I_{2}(N_{g,1})$.

\begin{prop}\label{prop:PureBraidInTorelli}
    If $b\geq 1$, $\lambda_{g,b}^{n}\big(P_{n}(N_{g,b})\big)$ is a 
    subgroup of the Torelli subgroup $I_{2}(N_{g+n+2(b-1),1})$.
\end{prop}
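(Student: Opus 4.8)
The plan is to factor the argument through the two maps composing $\lambda_{g,b}^{n}=\chi_{g+n,b}\circ\eta_{g,b}^{n}$ and to reduce everything to the induced action on mod $2$ homology. Fix $h\in P_{n}(N_{g,b})$. Since every element of $\Gamma_{g+n,b}$ restricts to the identity on the boundary, the map $\chi_{g+n,b}$ extends a homeomorphism of $N_{g+n,b}$ by the identity on the $b-1$ one-holed tori glued along the boundary; hence, writing $H_{1}(N_{g+n+2(b-1),1};\Z/2\Z)$ as generated by the classes coming from the subsurface $N_{g+n,b}$ together with those supported in the glued tori, $\chi_{g+n,b}(\eta(h))$ fixes the torus classes automatically and acts on the subsurface classes exactly as $\eta(h)$ does. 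Consequently it suffices to prove that $\eta(h)=\eta_{g,b}^{n}(h)$ lies in the kernel of the action of $\Gamma_{g+n,b}$ on $H_{1}(N_{g+n,b};\Z/2\Z)$, that is, in the mod $2$ Torelli group $I_{2}(N_{g+n,b})$.

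First I would fix convenient generators of $H_{1}(N_{g+n,b};\Z/2\Z)$: the classes $[\mu_{1}],\dots,[\mu_{n}]$ of the $n$ one-sided curves created by the blowup, together with the classes of simple closed curves lying in the complementary subsurface $N_{g,b}$ (any class can be represented by curves which, away from the cores $\mu_{i}$, lie in $N_{g,b}$). It then remains to check that $\eta(h)$ fixes each of these two families of generators.

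For the curves lying in $N_{g,b}$ I would argue as follows. Choose a representative of $h$ satisfying conditions (1)--(2), so that it restricts to a homeomorphism of $N_{g,b+n}$ fixing each meridian $\gamma_{i}=e_{i}(S^{1})$. Because $h\in P_{n}(N_{g,b})=\ker{\Gamma_{g,b}^{n}\to\Gamma_{g,b}}$, forgetting the marked points makes $h$ isotopic to the identity of $N_{g,b}$; hence for any simple closed curve $c$ in $N_{g,b}$ the class $\eta(h)_{*}[c]-[c]$ lies in the subgroup of $H_{1}(N_{g,b+n};\Z/2\Z)$ spanned by the meridian classes $[\gamma_{i}]$ (the kernel of capping off the $\gamma_{i}$). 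Under the blowup identification each $\gamma_{i}$ double covers $\mu_{i}$, so $[\gamma_{i}]=2[\mu_{i}]=0$ in $H_{1}(N_{g+n,b};\Z/2\Z)$, and therefore $\eta(h)_{*}[c]=[c]$. This is precisely the step where the prime $2$ is indispensable: over $\Z$ the meridian classes survive as $2[\mu_{i}]\neq 0$, which is why one obtains only the \emph{mod $2$} Torelli group and not the integral one, in contrast with the orientable situation of~\cite{BGG}.

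For the crosscap classes, since $h$ is pure it induces no permutation of the $\mu_{i}$, so $\eta(h)(\mu_{i})$ is the one-sided curve obtained by dragging $\mu_{i}$ along the track of the marked point $z_{i}$; this drag sweeps an (immersed) annulus whose mod $2$ boundary is $[\mu_{i}]+[\eta(h)(\mu_{i})]$, whence $[\eta(h)(\mu_{i})]=[\mu_{i}]$. Thus $\eta(h)$ fixes every chosen generator, lies in $I_{2}(N_{g+n,b})$, and the reduction of the first paragraph then finishes the proof. The main obstacle is making the two homological computations fully rigorous --- in particular controlling the interaction of the blowup identification with the forgetful (capping) map so as to see that $\eta(h)_{*}-\mathrm{id}$ takes values in the span of the vanishing classes $[\gamma_{i}]$; everything else is formal.
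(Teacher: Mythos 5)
Your proof is correct, but it takes a genuinely different route from the paper's. The paper argues generator by generator: using the presentation of $P_{n}(N_{g,b})$ from the Appendix (Theorem~\ref{thm:PresentationCasBord}), it identifies the images under $\lambda_{g,b}^{n}$ of the generators $B_{i,j}$, $\rho_{k,l}$ and $x_{u,t}$ as, respectively, Dehn twists along curves bounding a copy of $N_{2,1}$, crosscap slides $Y_{\mu_{k},\alpha_{k,l}}$, and products $\tau_{\xi_{u,t}}\tau_{\delta_{t}}^{-1}$ of twists along bounding curves (Figure~\ref{fig:generatorsdansTorelli}), and then invokes~\cite{Sz2} for the fact that all such mapping classes lie in the mod $2$ Torelli group. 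You instead treat an arbitrary element $h\in P_{n}(N_{g,b})$ at once by a direct mod $2$ homology computation: reduce along $\chi_{g+n,b}$ (which extends maps by the identity on the glued tori), split $H_{1}(N_{g+n,b};\FF_{2})$ into the crosscap classes $[\mu_{i}]$ and classes of curves in the subsurface $N_{g,b+n}$, and use the triviality of $h$ in $\Gamma_{g,b}$ to force $\eta(h)_{*}[c]-[c]$ into the span of the meridians $[\gamma_{i}]=2[\mu_{i}]$, which vanish mod $2$. What your route buys is self-containedness (no need for the presentation of the Appendix nor for the result of~\cite{Sz2}) and a transparent localization of why $p=2$ is forced, nicely complementing the paper's remark that the embedding fails in $I_{p}$ for odd $p$; what the paper's route buys is brevity, given ingredients it needs elsewhere anyway, together with the explicit geometric description of the generator images. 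One simplification to your crosscap step: a representative of $h$ satisfying conditions (1)--(2) of the blowup construction maps each $e_{i}(U)$ to itself, so $\eta(h)$ fixes each $\mu_{i}$ setwise and $\eta(h)_{*}[\mu_{i}]=[\mu_{i}]$ in $H_{1}(\cdot\,;\FF_{2})$ immediately (mod $2$ there is no sign issue even when $h$ reverses the local orientation at $z_{i}$); the ``dragging'' picture and the swept annulus are unnecessary.
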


\def\svgwidth{0.6\textwidth}
\setlength{\unitlength}{\svgwidth}
\begin{figure}[h]
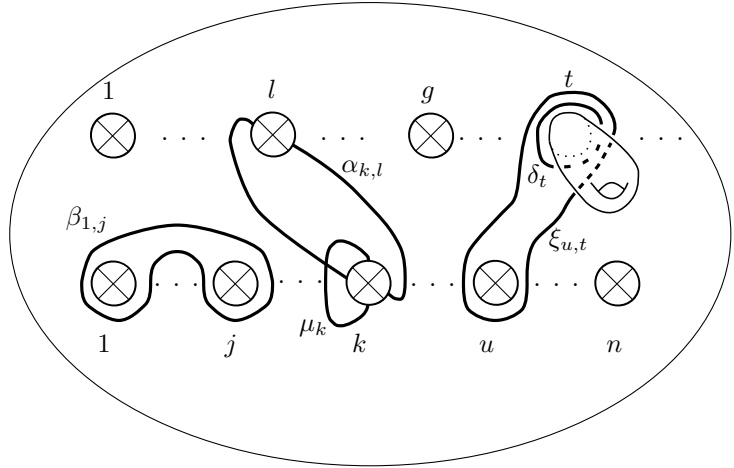

    \begin{center}
	\GenerateursTorelli
	\caption{Image of the generators of $P_{n}(N_{g,b})$ in $\Gamma_{g+n+2(b-1),1}$}
	\label{fig:generatorsdansTorelli}
    \end{center}
\end{figure}

\begin{proof}
    The image of the generators (see figures~\ref{fig:generators},~\ref{fig:generatorsBord} and 
    theorem~\ref{thm:PresentationCasBord}) $(B_{i,j})_{1\leq i<j\leq n}$, 
    $(\rho_{k,l})_{\build{}{\scriptstyle \!\!1\leq l\leq g}{\scriptstyle 1\leq k\leq n}}$ and 
    $(x_{u,t})_{\build{}{\scriptstyle 1\leq t\leq b-1}{\scriptstyle 
    \!\!\!\!\!\!1\leq u\leq n}}$ of $P_{n}(N_{g,b})$ under 
    $\lambda_{g,b}^{n}$ are respectively (see figure~\ref{fig:generatorsdansTorelli}):
    \begin{list}{$\ast$}{\leftmargin10mm}
        \item Dehn twist along curves $\beta_{i,j}$, which bound a subsurface 
	homeomorphic to $N_{2,1}$;
	\item crosscap slides $Y_{\mu_{k},\alpha_{k,l}}$;
	\item the product $\tau_{\xi_{u,t}}\tau_{\delta_{t}}^{-1}$ of 
	Dehn twists along the bounding curves $\xi_{u,t}$ and 
	$\delta_{t}$.
    \end{list}
    According to \cite{Sz2}, all these elements are in 
    the mod 2 Torelli subgroup $I_{2}(N_{g+n+2(b-1),1})$.
    \end{proof}

\begin{rem}
The embedding provided in Proposition~\ref{prop:PureBraidInTorelli} does not hold
for  $I_{p}(N_{g+n+2(b-1),1})$, when $p\not=2$: for exemple, the cross slide 
$Y_{\mu_{k},\alpha_{k,l}}$ is not in the mod $p$ Torelli subgroup since it sends $\mu_{k}$ to 
$\mu_{k}^{-1}$.
\end{rem}

\subsection{Conclusion of the proof}

We shall use the following result, which is a straightforward 
consequence of a similar result for mod $p$ Torelli groups of
orientable surfaces  due to L. Paris \cite{P}:

\begin{thm} \label{thm:P}
   Let $g \ge 1$. The mod p Torelli group $I_{p}(N_{g,1})$ is residually p-finite.
\end{thm}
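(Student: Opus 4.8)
The goal is to deduce the statement for nonorientable surfaces from Paris's theorem for orientable surfaces. The plan is to exhibit a natural injective homomorphism from the mod $p$ Torelli group $I_p(N_{g,1})$ into the mod $p$ Torelli group of a suitable orientable surface, in a way that is compatible with the respective actions on first homology. The most natural candidate is the orientation double cover: a nonorientable surface $N_{g,1}$ of genus $g$ with one boundary component has an orientation double cover $\Sigma$, which is an orientable surface (with boundary), and any homeomorphism of $N_{g,1}$ fixing the boundary lifts canonically to an orientation-preserving homeomorphism of $\Sigma$ commuting with the deck transformation. This lifting should give a homomorphism $\Phi\colon \Gamma_{g,1}\to \Gamma(\Sigma)$, and the key point to verify is that $\Phi$ carries $I_p(N_{g,1})$ into $I_p(\Sigma)$.

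The steps I would carry out are as follows. First, I would construct the lifting homomorphism $\Phi$ explicitly and check it is well defined on isotopy classes and injective (injectivity should follow because an isotopy of the lift commuting with the deck transformation descends to an isotopy downstairs, possibly after a short argument on the boundary behaviour). Second, I would analyse the effect of $\Phi$ on homology: the transfer/pullback map relates $H_1(N_{g,1};\Z/p\Z)$ to the invariant and anti-invariant parts of $H_1(\Sigma;\Z/p\Z)$ under the deck involution. The crucial verification is that if $h\in\Gamma_{g,1}$ acts trivially on $H_1(N_{g,1};\Z/p\Z)$, then its lift acts trivially on $H_1(\Sigma;\Z/p\Z)$; this is where the compatibility of the covering map with the action must be made precise, and it is the heart of the argument. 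Third, having established $\Phi\big(I_p(N_{g,1})\big)\subseteq I_p(\Sigma)$, I would invoke Paris's result (cited here as the orientable analogue) to conclude that $I_p(\Sigma)$ is residually $p$-finite, and then use the elementary fact that a subgroup of a residually $p$-finite group is again residually $p$-finite, together with the injectivity of $\Phi$, to finish.

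The step I expect to be the main obstacle is the second one: controlling the homological action under the double cover. The map induced on $\Z/p\Z$-homology by a covering need not be injective, and the interaction between the invariant/anti-invariant decomposition and acting trivially mod $p$ requires care, especially since over $\Z/2\Z$ the involution has eigenvalue $+1$ only, so the decomposition degenerates. If the clean double-cover argument does not go through for general $p$, a fallback is to realise the anti-invariant homology correctly and argue that triviality mod $p$ upstairs is implied by triviality downstairs. I would expect the boundary condition (one boundary component, maps fixing it pointwise) to simplify matters, since it rigidifies the lift and removes ambiguity in choosing the deck transformation; this is likely why the statement is phrased for $N_{g,1}$ rather than a closed surface.
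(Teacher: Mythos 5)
Your plan is a genuinely different route from the paper's, but it breaks down at precisely the step you flag as the heart of the matter: the lifting homomorphism $\Phi\colon\Gamma_{g,1}\to\Gamma(\Sigma)$ does \emph{not} carry $I_p(N_{g,1})$ into $I_p(\Sigma)$, and this already fails for $p=2$, the case the paper actually needs. The obstruction is the crosscap slide. Let $K\subset N_{g,1}$ be a Klein bottle with one hole (a regular neighbourhood of the first two crosscaps), $\pi_1(K)=\langle\mu_1,\mu_2\rangle$, and let $Y$ be a crosscap slide supported in $K$; it acts on $H_1(N_{g,1};\Z)$ by $\mu_1\mapsto-\mu_1$, $\mu_2\mapsto 2\mu_1+\mu_2$, fixing the other generators, so $Y\in I_2(N_{g,1})$ (this is the fact, due to Szepietowski \cite{Sz2}, that the paper uses in Proposition~\ref{prop:PureBraidInTorelli}). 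Now lift: $\pi_1(\widetilde K)=\ker(w)\subset\pi_1(K)$ is freely generated by $x=\mu_1^{2}$, $y=\mu_1\mu_2$, $z=\mu_2\mu_1$, and a direct computation with the boundary-fixing (equivalently, orientation-preserving) lift --- the one defining $\Phi$ --- shows it sends $y$ to a conjugate of $z$ and $z$ to a conjugate of $y$, hence exchanges $[y]$ and $[z]$ in homology. But $[y]+[z]$ is the transfer of $[\alpha]=\mu_1+\mu_2\in H_1(N_{g,1};\FF_2)$, and for a surface with boundary the transfer $H_1(N_{g,1};\FF_2)\to H_1(\Sigma;\FF_2)$ is injective (long exact sequence of $0\to\FF_2\to\FF_2[\Z/2\Z]\to\FF_2\to 0$, using $H_2(N_{g,1};\FF_2)=0$), so $[y]\neq[z]$ in $H_1(\Sigma;\FF_2)$ and $\Phi(Y)\notin I_2(\Sigma)$. (For odd $p$ crosscap slides are not even in $I_p(N_{g,1})$ --- see the remark following Proposition~\ref{prop:PureBraidInTorelli} --- but the $p=2$ failure alone invalidates the strategy for the theorem as stated.) What does survive of your step 2 is only that $\Phi(I_2(N_{g,1}))$ acts \emph{unipotently} on $H_1(\Sigma;\FF_2)$, being trivial on the image of the transfer and on the quotient by it; to salvage the double-cover approach you would therefore need, in addition, Kolchin's theorem (a unipotent subgroup of $GL_n(\FF_2)$ is a finite $2$-group) and Gruenberg's lemma on extensions of residually $p$-finite groups by finite $p$-groups \cite{Gr} --- a substantially longer argument than the one you sketched, and one you would still have to write.

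The paper sidesteps all of this covering-space homology. Since $b=1$, the group $\pi_1(N_{g,1})$ is free; by the Dehn--Nielsen--Baer theorem $\Gamma_{g,1}$ embeds in $\operatorname{Aut}(\pi_1(N_{g,1}))$, and the Torelli condition transports verbatim because $H_1(N_{g,1};\FF_p)$ \emph{is} $H_1$ of that free group. Hence $I_p(N_{g,1})$ embeds in the kernel of $\operatorname{Aut}(F)\to GL(H_1(F;\FF_p))$, which is residually $p$-finite by Theorem 1.4 of Paris \cite{P} (the free-group statement, rather than the orientable-surface statement you planned to invoke), and residual $p$-finiteness passes to subgroups. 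This works uniformly in $p$, with no compatibility of homological actions left to verify.
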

\begin{proof}
We use Dehn-Nielsen-Baer Theorem (see for instance Theorem  5.15.3 of \cite{CVZ}) which states that $\Gamma_{g,1}$ embeds in $Aut(\pi_1(N_{g,1}))$.
Since $\pi_1(N_{g,1})$ is free we can apply Theorem  1.4 in \cite{P} 
which claims  that if $G$ is a free group, its  mod $p$ Torelli group
(\emph{i.e.}  the kernel of the canonical map from $Aut(G)$ to 
$GL(H_1(G,\FF_p)$) is residually $p$-finite.
Therefore $I_{p}(N_{g,1})$ is residually $p$-finite.
\end{proof}

\begin{thm}\label{thm:casbord}
 Let $g\ge 1$, $b>0$, $n\ge 1$.  $P_{n}(N_{g,b})$ is residually $2$-finite.
\end{thm}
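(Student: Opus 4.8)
The plan is to realise $P_{n}(N_{g,b})$ as a subgroup of a group already shown to be residually $2$-finite, and then to invoke the fact that residual $2$-finiteness passes to subgroups. The two inputs are the injective embedding $\lambda_{g,b}^{n}$ together with \repr{PureBraidInTorelli}, and Theorem~\ref{thm:P} specialised to $p=2$.

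First I would record that $\lambda_{g,b}^{n}=\chi_{g+n,b}\circ\eta_{g,b}^{n}$ is injective, being a composite of the injective maps $\eta_{g,b}^{n}$ and $\chi_{g+n,b}$; hence $P_{n}(N_{g,b})$ is isomorphic to its image $\lambda_{g,b}^{n}\big(P_{n}(N_{g,b})\big)$. By \repr{PureBraidInTorelli}, this image lies inside the mod $2$ Torelli group $I_{2}(N_{g+n+2(b-1),1})$. Since $g\geq 1$, $n\geq 1$ and $b\geq 1$ force the genus $g+n+2(b-1)$ to be at least $1$, Theorem~\ref{thm:P} applies with $p=2$ and shows that $I_{2}(N_{g+n+2(b-1),1})$ is residually $2$-finite.

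It remains to descend from the overgroup to the subgroup. I would use the elementary observation that if $G$ is residually $2$-finite and $H\leq G$, then $H$ is residually $2$-finite: for $h\in H$ with $h\neq 1$, choose a homomorphism $\varphi\colon G\to K$ with $K$ a finite $2$-group and $\varphi(h)\neq 1$, and simply restrict it to $H$. Applying this with $H=\lambda_{g,b}^{n}\big(P_{n}(N_{g,b})\big)$ inside $I_{2}(N_{g+n+2(b-1),1})$ yields that $P_{n}(N_{g,b})$ is residually $2$-finite, as claimed.

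There is no genuine obstacle at this stage: all the substantive work sits in the embedding result and in the residual $2$-finiteness of the mod $2$ Torelli group, so this proof amounts to assembling those two facts and checking that the relevant genus is positive. The one point worth stressing is that $p=2$ is precisely the prime for which both the Torelli embedding and Theorem~\ref{thm:P} cooperate, which is exactly why the argument delivers residual $2$-finiteness and not a statement for a general prime $p$.
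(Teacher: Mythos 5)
Your proof is correct and follows essentially the same route as the paper's: embed $P_{n}(N_{g,b})$ into the mod $2$ Torelli group $I_{2}(N_{g+n+2(b-1),1})$ via the injective map $\lambda_{g,b}^{n}$ together with Proposition~\ref{prop:PureBraidInTorelli}, apply Theorem~\ref{thm:P} with $p=2$, and use the fact that residual $2$-finiteness passes to subgroups. The only difference is that you spell out the subgroup-inheritance step and the genus check explicitly, which the paper leaves implicit.
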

\begin{proof}
The group  $P_{n}(N_{g,b})$ is a subgroup of 
$I_{2}(N_{g+n+2(b-1),1})$ by 
Proposition~\ref{prop:PureBraidInTorelli} and by injectivity of the map $\lambda_{g,b}^{n}$. Then by Theorem  \ref{thm:P} it follows that 
$P_{n}(N_{g,b})$ is residually $2$-finite.
\end{proof}

%%%%%%%%%%%%%%%%%%%%%%%%%%%%%%%%%%%%%%%%%%%%%%%%%%%%%%%%%%%%%%
%%%%%%%%%%%%%%%%%%%%%%%%%%%%%%%%%%%%%%%%%%%%%%%%%%%%%%%%%%%%%%
%%%%%%%%%%%%%%%%%%%%%%%%%%%%%%%%%%%%%%%%%%%%%%%%%%%%%%%%%%%%%%
%%%%%%%%%%%%%%%%%%%%%%%%%%%%%%%%%%%%%%%%%%%%%%%%%%%%%%%%%%%%%%
%%%%%%%%%%%%%%%%%%%%%%%%%%%%%%%%%%%%%%%%%%%%%%%%%%%%%%%%%%%%%%
%%%%%%%%%%%%%%%%%%%%%%%%%%%%%%%%%%%%%%%%%%%%%%%%%%%%%%%%%%%%%%
%%%%%%%%%%%%%%%%%%%%%%%%%%%%%%%%%%%%%%%%%%%%%%%%%%%%%%%%%%%%%%
%%%%%%%%%%%%%%%%%%%%%%%%%%%%%%%%%%%%%%%%%%%%%%%%%%%%%%%%%%%%%%

\section{$\boldsymbol{p}$-almost direct products}

\subsection{On residually $\boldsymbol{p}$-finite groups}

Let $p$ be a prime number and $G$ a group. If $H$ is a subgroup of $G$, we denote by $H^{p}$ the 
subgroup generated by $\{h^{p}\,/\,h\in H\}$. Following ~\cite{P}, we define the lower $\FF_{p}$-linear central 
filtration $(\gamma_{n}^{p}G)_{n\in\N^{*}}$ of $G$ by $\gamma_{1}^{p}G=G$ and, for 
$n\geq 1$, $\gamma_{n+1}^{p}G$ is the subgroup of $G$ generated by $[G,\gamma_{n}^{p}G]\cup 
(\gamma_{n}^{p}G)^{p}$. Note that the subgroups $\gamma_{n}^{p}G$ are characteristic in $G$ and that 
the quotient group $G/\gamma_{2}^{p}G$ is nothing but the first homology group 
$H_{1}(G;\FF_{p})$. The followings are proved in~\cite{P}:
\begin{list}{$\bullet$}{\leftmargin10mm}
    \item for $m,n\geq 1$, $[\gam[m]{G},\gam{G}]\subset\gam[m+n]{G}$;
    \item a finitely generated group $G$ is a finite $p$-group if, and only if, there exists some 
    $N\geq 1$ such that $\gam[N]{G}=\{1\}$;
    \item a finitely generated group $G$ is residually $p$-finite if, and only if, 
    $\build{\cap}{n=1}{+\infty}\gam{G}=\{1\}$;
\end{list}
and clearly, if $f:G\to G'$ is a group homomorphism, then $f(\gam{G})\subset\gam{G'}$ for all $n\geq 
1$.

\begin{deft}\label{deft:main}
 Let $\xymatrix{1 \ar[r] & A\ar@{^{(}->}[r] & B\ar[r]^{\lambda} & C\ar[r]\ar@/^1pc/@{.>}[l]^{\sigma} & 1}$ 
 be a split exact sequence.
\begin{itemize}
\item If the action of $C$ induced on $H_{1}(A;\Z)$ is 
    trivial (i.e. the action is trivial on $A^{Ab}=A/[A,A]$), we say that  B is an almost direct  product of $A$ and $C$.
  \item   If the action of $C$ induced on $H_{1}(A;\FF_{p})$ is 
    trivial (i.e. the action is trivial on $A/\gamma_{2}^{p}A$), we say that $B$ is a $p$-almost direct product of $A$ and $C$.
    \end{itemize}
\end{deft}

Let us remark that, as in the case of almost direct products (Proposition 6.3 of \cite{BGoGu}), the fact to be a $p$-almost direct product does not depend on the choice of the section.

\begin{prop} \label{prop:allpalmost}
Let $\xymatrix{
1 \ar[r] & A\ar@{^{(}->}[r] & 
B\ar[r]^{\lambda} & 
C\ar[r] & 1}$ be a split exact sequence of
groups. Let $\sigma,\sigma'$ be sections for $\lambda$, and suppose that the induced
action of $C$ on $A$ via $\sigma$ on $H_{1}(A;\FF_{p})$ is trivial. Then the same is true for the
section $\sigma'$.
\end{prop}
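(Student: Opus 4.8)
The plan is to show directly that the two actions of $C$ on $A$ differ by an inner automorphism of $A$, and that such automorphisms act trivially on $H_{1}(A;\FF_{p})$. First I would identify $A$ with its image in $B$, a normal subgroup, so that the action of $C$ on $A$ induced by a section $\tau$ of $\lambda$ is given by conjugation, $c\cdot_{\tau}a=\tau(c)\,a\,\tau(c)^{-1}$ for $c\in C$ and $a\in A$. Since $\lambda\circ\sigma=\lambda\circ\sigma'=\id_{C}$, for each $c\in C$ the element $d(c):=\sigma'(c)\sigma(c)^{-1}$ lies in $\ker{\lambda}=A$, and a one-line computation gives
\[
c\cdot_{\sigma'}a=\sigma'(c)\,a\,\sigma'(c)^{-1}=d(c)\,\bigl(c\cdot_{\sigma}a\bigr)\,d(c)^{-1}.
\]
Thus the action of $c$ via $\sigma'$ is the action via $\sigma$ post-composed with conjugation by the element $d(c)\in A$.

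The next step is to observe that conjugation by any element of $A$ induces the identity on $H_{1}(A;\FF_{p})=A/\gam[2]{A}$. Indeed, for $x,a\in A$ one has $xax^{-1}a^{-1}\in[A,A]\subseteq\gam[2]{A}$, because $\gam[2]{A}$ is generated by $[A,A]\cup A^{p}$; hence $xax^{-1}$ and $a$ have the same class in $A/\gam[2]{A}$. Reducing the displayed identity modulo $\gam[2]{A}$ and taking $x=d(c)$, I obtain that the class of $c\cdot_{\sigma'}a$ equals that of $c\cdot_{\sigma}a$, which by hypothesis equals the class of $a$. Hence $C$ acts trivially on $H_{1}(A;\FF_{p})$ through $\sigma'$ as well, which is the claim.

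I do not expect any real obstacle here: the entire content is the inclusion $[A,A]\subseteq\gam[2]{A}$, immediate from the definition of the lower $\FF_{p}$-linear central filtration, which is what makes the correction term $d(c)$ disappear after passing to $H_{1}(-;\FF_{p})$. The same argument, with $\gam[2]{A}$ replaced by $[A,A]$, proves the analogous independence-of-section statement for ordinary almost direct products, matching the remark that preceded the proposition.
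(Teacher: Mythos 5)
Your proof is correct and is essentially the paper's own argument: both write $\sigma'(c)=d(c)\,\sigma(c)$ with $d(c)=\sigma'(c)\sigma(c)^{-1}\in\ker{\lambda}=A$, and then kill the correction term using that conjugation by an element of $A$ is trivial modulo $\gamma_{2}^{p}A$ because $[A,A]\subseteq\gamma_{2}^{p}A$. The only (immaterial) difference is the order in which the inner-automorphism observation and the hypothesis on $\sigma$ are applied.
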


\begin{proof}
Let $a\in A$ and $c\in C$. By hypothesis, $\sigma(c)\, a \,
(\sigma(c))^{-1}\equiv a \bmod {\gamma_{2}^{p}A}$. Let $\sigma'$ be another section for
$\lambda$. Then $\lambda \circ \sigma'(c)=\lambda \circ \sigma(c)$, and so $\sigma'(c) \, (\sigma(c))^{-1}\in
\ker \lambda$. Thus there exists $a'\in A$ such that $\sigma'(c)=a' \, \sigma(c)$, and
hence
\begin{equation*}
\sigma'(c)\, a \, (\sigma'(c))^{-1}\equiv a'\, \sigma(c)\,a\, (\sigma(c))^{-1} \,
a'^{-1}\equiv a'aa'^{-1}\equiv a \bmod{\gamma_{2}^{p}A}.
\end{equation*}
Thus the induced action of $C$ on $H_{1}(A;\FF_{p})$ via $\sigma'$ is also trivial.
\end{proof}

\medskip
The first  goal of this section is to prove the following Theorem (see Theorem 3.1 in \cite{FR}
for an analogous result for almost direct products).

\begin{thm}\label{thm:suite exacte}
    Let $\xymatrix{
1 \ar[r] & A\ar@{^{(}->}[r] & 
B\ar[r]^{\lambda} & 
C\ar[r]\ar@/^1pc/@{.>}[l]^{\sigma} & 1}$ be a split exact sequence where  $B$ is a $p$-almost direct product of $A$ and $C$. 
Then, for all $n\geq 1$, one has a split exact sequence
    $$
    \xymatrix{
1 \ar[r] & \gam{A}\ar@{^{(}->}[r] & 
\gam{B}\ar[r]^{\lambda_{n}} & 
\gam{C}\ar[r]\ar@/^1pc/@{.>}[l]^{\sigma_{n}} & 1}
    $$
    where $\lambda_{n}$ and $\sigma_{n}$ are restrictions of $\lambda$ and $\sigma$.
\end{thm}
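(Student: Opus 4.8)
The plan is to separate the statement into a routine ``functorial'' part and one genuinely hard point. The inclusion $A\hookrightarrow B$, the map $\lambda$ and the section $\sigma$ are all homomorphisms, so the general fact $f(\gamma_{n}^{p}G)\subseteq\gamma_{n}^{p}G'$ recalled above gives at once $\gamma_{n}^{p}A\subseteq\gamma_{n}^{p}B$, $\sigma(\gamma_{n}^{p}C)\subseteq\gamma_{n}^{p}B$ and $\lambda(\gamma_{n}^{p}B)\subseteq\gamma_{n}^{p}C$; hence $\lambda_{n}$ and $\sigma_{n}$ are well-defined restrictions. Since $\lambda\circ\sigma=\mathrm{Id}_{C}$ we get $\gamma_{n}^{p}C=\lambda(\sigma(\gamma_{n}^{p}C))\subseteq\lambda(\gamma_{n}^{p}B)$, so $\lambda_{n}$ is surjective and $\sigma_{n}$ is a section of it. As $\ker\lambda_{n}=\gamma_{n}^{p}B\cap A$, the whole statement reduces to the single equality $\gamma_{n}^{p}B\cap A=\gamma_{n}^{p}A$.

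I would deduce this from the stronger decomposition
\[
\gamma_{n}^{p}B=\gamma_{n}^{p}A\cdot\sigma(\gamma_{n}^{p}C)\qquad(\star)
\]
which is a product of subgroups, hence itself a subgroup, because $\gamma_{n}^{p}A$ is characteristic in $A$, thus normal in $B$ and in particular normalised by $\sigma(\gamma_{n}^{p}C)$. Granting $(\star)$: if $b\in\gamma_{n}^{p}B\cap A$ is written $b=\alpha\,\sigma(\gamma)$ with $\alpha\in\gamma_{n}^{p}A$ and $\gamma\in\gamma_{n}^{p}C$, then applying $\lambda$ gives $1=\lambda(b)=\gamma$, whence $b=\alpha\in\gamma_{n}^{p}A$; the reverse inclusion $\gamma_{n}^{p}A\subseteq\gamma_{n}^{p}B\cap A$ is clear.

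The crux, and the main obstacle, is the following propagation statement: \emph{for every $k\geq1$ the subgroup $\sigma(\gamma_{k}^{p}C)$ acts by conjugation on $A$ ``with depth $k$'', i.e.\ $\sigma(\gamma)\,x\,\sigma(\gamma)^{-1}\equiv x\pmod{\gamma_{j+k}^{p}A}$ for all $\gamma\in\gamma_{k}^{p}C$ and all $x\in\gamma_{j}^{p}A$, $j\geq1$.} Conceptually, conjugation is a homomorphism $C\to\operatorname{Aut}(A)$ whose image, by the $p$-almost direct product hypothesis, acts trivially on $A/\gamma_{2}^{p}A$, that is, with depth $1$ on the $\mathbb{F}_{p}$-filtration of $A$. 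The subgroup $U$ of filtration-preserving automorphisms of depth $\geq1$, filtered by depth $U_{(k)}$, satisfies $[U_{(i)},U_{(j)}]\subseteq U_{(i+j)}$ and --- crucially, and only because the filtration incorporates $p$-th powers --- $U_{(i)}^{p}\subseteq U_{(i+1)}$; thus $\{U_{(k)}\}$ is an $\mathbb{F}_{p}$-central filtration with $U_{(1)}=U$, and the minimality of the lower $\mathbb{F}_{p}$-linear central series forces $\gamma_{k}^{p}C$ into $U_{(k)}$. Equivalently one argues by a double induction on $k$: the case $k=1$ is an inner induction on $j$ (base $j=1$ is the hypothesis; the step sends a generator $[a,\beta]$ of $\gamma_{j+1}^{p}A$ to $[a',\beta']$ with $a'\equiv a\bmod\gamma_{2}^{p}A$, $\beta'\equiv\beta\bmod\gamma_{j+1}^{p}A$, and absorbs the error using $[\gamma_{2}^{p}A,\gamma_{j}^{p}A]\subseteq\gamma_{j+2}^{p}A$ and the Hall--Petrescu formula on the $p$-th-power generators), while the step $k\to k+1$ treats the generators $[\sigma(c),\sigma(\delta)]$ and $\sigma(\delta)^{p}$ of $\sigma(\gamma_{k+1}^{p}C)$ by the commutator and $p$-th-power depth estimates. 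The $p$-th-power estimate $U_{(i)}^{p}\subseteq U_{(i+1)}$ is the delicate point: it fails for the ordinary lower central series, which is precisely why only the $p$-almost direct (and not the naive almost direct) hypothesis is available here.

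Finally I would prove $(\star)$ by induction on $n$, the base $n=1$ being $B=A\cdot\sigma(C)$, which is the splitting. For the step, write $L_{n+1}:=\gamma_{n+1}^{p}A\cdot\sigma(\gamma_{n+1}^{p}C)$; as this is a subgroup, it suffices to check that it contains the generators $[B,\gamma_{n}^{p}B]$ and $(\gamma_{n}^{p}B)^{p}$ of $\gamma_{n+1}^{p}B$. Writing elements of $B$ as $a\,\sigma(c)$ and, via the inductive hypothesis $(\star)$ at level $n$, elements of $\gamma_{n}^{p}B$ as $\alpha\,\sigma(\gamma)$, the commutator identities reduce $[a\sigma(c),\alpha\sigma(\gamma)]$ to a product of $B$-conjugates of the four commutators $[a,\alpha]$, $[\sigma(c),\alpha]$, $[a,\sigma(\gamma)]$ and $[\sigma(c),\sigma(\gamma)]$. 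The first lies in $[\gamma_{1}^{p}A,\gamma_{n}^{p}A]\subseteq\gamma_{n+1}^{p}A$; the second and third lie in $\gamma_{n+1}^{p}A$ by the propagation statement (with $(k,j)=(1,n)$ and $(k,j)=(n,1)$ respectively); and $[\sigma(c),\sigma(\gamma)]=\sigma([c,\gamma])\in\sigma(\gamma_{n+1}^{p}C)$. Since $\gamma_{n+1}^{p}A$ is normal in $B$, all conjugates stay in $L_{n+1}$, and the $p$-th powers $(\alpha\sigma(\gamma))^{p}$ are handled in the same way by Hall--Petrescu, using $(\gamma_{n}^{p}A)^{p}\subseteq\gamma_{n+1}^{p}A$, $\sigma(\gamma)^{p}=\sigma(\gamma^{p})\in\sigma(\gamma_{n+1}^{p}C)$ and the propagation statement to absorb the commutator corrections. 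This yields $\gamma_{n+1}^{p}B\subseteq L_{n+1}$; the reverse inclusion is the easy functorial one, completing the induction and hence the proof.
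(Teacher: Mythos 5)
Your proof is correct and follows essentially the same route as the paper's: your propagation statement is precisely the paper's Lemma~\ref{lem:lemme FR} (with $C'=\sigma(C)$), proved there by the very double induction you sketch as a fallback, and your decomposition $(\star)$ is equivalent to the paper's inductive claim $\tau(\gamma_{n}^{p}B)\subseteq\gamma_{n}^{p}A$ for the Falk--Randell retraction $\tau(b)=\bigl(\sigma\lambda(b)\bigr)^{-1}b$, with the same two generator types $[b,x]$ and $x^{p}$ handled by the same lemma at the same points. The only noteworthy differences are presentational: exploiting normality of $\gamma_{n+1}^{p}A$ in $B$ lets you absorb conjugates silently where the paper tracks them through an explicit expansion of $\tau([b,x])$, while your ``conceptual'' argument via the depth filtration $U_{(k)}$ of $\aut{A}$ asserts its key properties without proof ($[U_{(i)},U_{(j)}]\subseteq U_{(i+j)}$, $U_{(i)}^{p}\subseteq U_{(i+1)}$, and the fact that triviality of the action on $A/\gamma_{2}^{p}A$ alone already places the image of $C$ in $U_{(1)}$, which itself requires your inner induction on $j$), so it is the double induction that actually carries the argument.
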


We shall need the following preliminary result.

\begin{lem}\label{lem:lemme FR}
    Under the hypotheses of Theorem~\ref{thm:suite exacte}, one has, for all $m,n\geq 1$ 
    $$[\gam[m]{C'},\gam{A}]\subset\gam[m+n]{A}$$
    where $C'$ denotes $\sigma(C)$.
\end{lem}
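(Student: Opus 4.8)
The plan is to prove the inclusion $[\gamma_{m}^{p}C',\gamma_{n}^{p}A]\subseteq\gamma_{m+n}^{p}A$ by induction on $m$, taking the case $m=1$ (for all $n$) as the base and establishing it by a secondary induction on $n$. Two standing facts will be used throughout. First, each $\gamma_{k}^{p}A$ is characteristic in $A$ by the properties recalled in Section 3, and since $A\trianglelefteq B$, conjugation by any element of $B$ restricts to an automorphism of $A$; hence every $\gamma_{k}^{p}A$ is in fact normal in $B$, and in particular $C'=\sigma(C)$ normalises each $\gamma_{k}^{p}A$. This lets me conjugate commutators freely without leaving the intended target. Second, I will repeatedly invoke the three subgroups lemma, namely that for $N\trianglelefteq B$ the inclusions $[[X,Y],Z]\subseteq N$ and $[[Y,Z],X]\subseteq N$ force $[[Z,X],Y]\subseteq N$, applied with $N$ a suitable $\gamma_{\bullet}^{p}A$, together with the inclusion $[\gamma_{a}^{p}A,\gamma_{b}^{p}A]\subseteq\gamma_{a+b}^{p}A$.

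For the base case I would show $[C',\gamma_{n}^{p}A]\subseteq\gamma_{n+1}^{p}A$ by induction on $n$. When $n=1$ the hypothesis that $B$ is a $p$-almost direct product says precisely that conjugation by any $c\in C'$ fixes $A$ modulo $\gamma_{2}^{p}A$, whence $[c,a]=(c^{-1}a^{-1}c)\,a\equiv a^{-1}a=1\pmod{\gamma_{2}^{p}A}$ and $[C',A]\subseteq\gamma_{2}^{p}A$. For the inductive step I recall that $\gamma_{n+1}^{p}A$ is generated by the commutators $[a,y]$ with $a\in A$, $y\in\gamma_{n}^{p}A$, and by the powers $z^{p}$ with $z\in\gamma_{n}^{p}A$. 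For the commutator generators I apply the three subgroups lemma with $X=A$, $Y=\gamma_{n}^{p}A$, $Z=C'$ and $N=\gamma_{n+2}^{p}A$: the two needed inclusions $[[\gamma_{n}^{p}A,C'],A]\subseteq[A,\gamma_{n+1}^{p}A]\subseteq\gamma_{n+2}^{p}A$ and $[[C',A],\gamma_{n}^{p}A]\subseteq[\gamma_{2}^{p}A,\gamma_{n}^{p}A]\subseteq\gamma_{n+2}^{p}A$ follow from the inductive hypothesis and from the case $n=1$, and yield $[C',[A,\gamma_{n}^{p}A]]\subseteq\gamma_{n+2}^{p}A$.

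The power generators are the genuinely new point, absent from the almost direct product case of Falk--Randell. For $z\in\gamma_{n}^{p}A$ and $c\in C'$ I expand $[c,z^{p}]=\prod_{i=0}^{p-1}[c,z]^{z^{i}}$ using $[c,xy]=[c,y][c,x]^{y}$. By the inductive hypothesis $[c,z]\in\gamma_{n+1}^{p}A$, and modulo $\gamma_{n+2}^{p}A$ conjugation by $z^{i}\in A$ is trivial on $\gamma_{n+1}^{p}A$ because $[A,\gamma_{n+1}^{p}A]\subseteq\gamma_{n+2}^{p}A$; hence all $p$ factors coincide and $[c,z^{p}]\equiv[c,z]^{p}\pmod{\gamma_{n+2}^{p}A}$. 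Since $[c,z]^{p}\in(\gamma_{n+1}^{p}A)^{p}\subseteq\gamma_{n+2}^{p}A$, this gives $[c,z^{p}]\in\gamma_{n+2}^{p}A$. The commutator identities $[xy,a]=[x,a]^{y}[y,a]$ and $[c,xy]=[c,y][c,x]^{y}$, combined with the normality of $\gamma_{n+2}^{p}A$ in $B$, then promote these statements about generators to $[C',\gamma_{n+1}^{p}A]\subseteq\gamma_{n+2}^{p}A$, completing the base case.

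For the inductive step in $m$ I would write $\gamma_{m+1}^{p}C'$ as generated by $[C',\gamma_{m}^{p}C']$ and $(\gamma_{m}^{p}C')^{p}$. For the commutator generators the three subgroups lemma with $X=C'$, $Y=\gamma_{m}^{p}C'$, $Z=\gamma_{n}^{p}A$ and $N=\gamma_{m+n+1}^{p}A$ applies, since both $[[\gamma_{m}^{p}C',\gamma_{n}^{p}A],C']\subseteq[\gamma_{m+n}^{p}A,C']\subseteq\gamma_{m+n+1}^{p}A$ and $[[\gamma_{n}^{p}A,C'],\gamma_{m}^{p}C']\subseteq[\gamma_{n+1}^{p}A,\gamma_{m}^{p}C']\subseteq\gamma_{m+n+1}^{p}A$ follow from the inductive hypothesis in $m$ together with the base case. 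For a power generator $c^{p}$ with $c\in\gamma_{m}^{p}C'$, I repeat the collapsing argument with $[c^{p},a]=\prod_{i=0}^{p-1}[c,a]^{c^{i}}$: here $[c,a]\in\gamma_{m+n}^{p}A$ by the inductive hypothesis, conjugation by $c^{i}\in C'$ is trivial on $\gamma_{m+n}^{p}A$ modulo $\gamma_{m+n+1}^{p}A$ by the base case, so $[c^{p},a]\equiv[c,a]^{p}\in(\gamma_{m+n}^{p}A)^{p}\subseteq\gamma_{m+n+1}^{p}A$. The main obstacle relative to the classical lower central series argument is exactly the control of these $p$-th power generators; the mechanism that resolves it is that the base case supplies triviality of the $C'$- and $A$-conjugation actions one filtration step up, so a product of $p$ conjugates collapses to a $p$-th power that the very definition of $\gamma_{\bullet}^{p}$ absorbs into the next term. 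A final routine passage from generators to the full commutator subgroups, using the identities above and normality of the targets in $B$, closes the induction.
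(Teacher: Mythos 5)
Your proof is correct and takes essentially the same route as the paper's: the same double induction (first on $n$ with $m=1$, then on $m$), the same splitting of the generators of $\gamma_{k+1}^{p}(\cdot)$ into commutators and $p$-th powers, with your three subgroups lemma playing exactly the role of the commutator identity from Magnus--Karrass--Solitar used in the paper, and your conjugate-expansion of $[c,z^{p}]$ and $[c^{p},a]$ matching the paper's iterated-identity computations. The only differences are cosmetic: you make explicit the normality of each $\gamma_{k}^{p}A$ in $B$ and the promotion from generators to full subgroups, points the paper leaves implicit.
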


\begin{proof}
    First, we prove by induction on $n$ that $[C',\gam{A}]\subset\gam[n+1]{A}$ for all $n\geq 1$. 
    The cas $n=1$ corresponds to the hypotheses: the action of $C$ on $H_{1} (A;\FF_{p})=A/\gam[2]{A}$ 
    is trivial if, and only if, $[C',A]\subset\gam[2]{A}$. Thus, suppose that 
    $[C',\gam{A}]\subset\gam[n+1]{A}$ for some $n\geq 1$ and let us prove that 
    $[C',\gam[n+1]{A}]\subset\gam[n+2]{A}$. In view of the definition of $\gam[n+1]{A}$, we have to 
    prove that $\big[C',[A,\gam{A}]\big]\subset\gam[n+2]{A}$ and 
    $[C',(\gam{A})^{p}]\subset\gam[n+2]{A}$. For the first case, we use a classical result 
    (see~\cite{MKS}, theorem 5.2) which says
    \[
    \big[C',[A,\gam{A}]\big]=\big[\gam{A},[C',A]\big]\big[A,[\gam{A},C']\big].
    \]
    We have just seen that $[C',A]\subset\gam[2]{A}$ thus 
    $\big[\gam{A},[C',A]\big]\subset[\gam{A},\gam[2]{A}]\subset\gam[n+2]{A}$. Then, the induction 
    hypotheses says that $[\gam{A},C']\subset\gam[n+1]{A}$ thus 
    $\big[A,[\gam{A},C']\big]\subset[A,\gam[n+1]{A}]\subset\gam[n+2]{A}$. The second case works as 
    follows: for $c\in C'$ and $x\in\gam{A}$, one has, using the fact that 
    $[u,vw]=[u,w][u,v]\big[[u,v],w\big]$ (see~\cite{MKS})
    \[
    [c,x^{p}]=[c,x][c,x^{p-1}]\big[[c,x^{p-1}],x\big]=
    \cdots=[c,x]^{p}\big[[c,x],x\big]\big[[c,x^{2}],x\big]\cdots\big[[c,x^{p-1}],x\big].
    \]
    Since $c\in C'$ and $x\in\gam{A}$, one has 
    $[c,x^{i}]\in[C',\gam{A}]\subset\gam[n+1]{A}$ for all $i$, $1\leq 
    i\leq p-1$, which leads 
    to $[c,x]^{p}\in(\gam[n+1]{A})^{p}\subset\gam[n+2]{A}$ and 
    $\big[[c,x^{i}],x\big]\in[\gam[n+1]{A},A]\subset\gam[n+2]{A}$.
    
    \smallskip
    Now, we suppose that $[\gam[m]{C'},\gam{A}]\subset\gam[m+n]{A}$ for some $m\geq 1$ and all 
    $n\geq 1$ and prove that 
    $[\gam[m+1]{C'},\gam{A}]\subset\gam[m+n+1]{A}$. As above, there 
    are two cases which work on the same way:
    
    \medskip
    \textit{(i)}
    $\begin{array}[t]{rcl}
    \big[[C',\gam[m]{C'}],\gam{A}\big]&=&\big[[\gam{A},C'],\gam[m]{C'}\big]\big[[\gam[m]{C'},\gam{A}],C'\big]\\
    &\subset&\big[\gam[n+1]{A},\gam[m]{C'}\big]\big[\gam[m+n]{A},C'\big]\\
    &\subset&\gam[m+n+1]{A}.
    \end{array}$
    
    \smallskip
    \textit{(ii)} For $c\in\gam[m]{C'}$ and $x\in\gam{A}$, one has
    \[
    [c^{p},x]=\big[c,[x,c^{p-1}]\big][c^{p-1},x][c,x]=\cdots=\big[c,[x,c^{p-1}]\big]\cdots\big[c,[x,c]\big][c,x]^{p}
    \]
    which is an element of $\gam[m+n+1]{A}$ by induction hypotheses.
\end{proof}

\begin{prooftext}{Theorem~\ref{thm:suite exacte}}
    Restrictions of $\lambda$ and $\sigma$ give rise to morphisms 
    $\lambda_{n}:\gam{B}\to\gam{C}$ and 
    $\sigma_{n}:\nolinebreak[4]\gam{C}\to\nolinebreak[4]\gam{B}$ satisfying 
    $\lambda_{n}\circ\sigma_{n}=\id_{\gam{C}}$, $\sigma_{n}$ is 
    onto and $\ker{\lambda_{n}}=A\cap\gam{B}$. Thus, we need to prove 
    that $A\cap\gam{B}=\gam{A}$ for all $n\geq 1$. Clearly one has 
    $\gam{A}\subset A\cap\gam{B}$. In order to prove the converse 
    inclusion, we follow the method developed in~\cite{FR} for 
    almost semi-direct product and define $\tau:B\to B$ by 
    $\tau(b)=\big(\sigma\lambda(b)\big)^{-1}b$. This map has the 
    following properties:
    \begin{list}{{\it 
	(\roman{liste})}}{\leftmargin10mm\usecounter{liste}}
        \item since $\lambda\sigma=\id_{C}$, $\tau(B)\subset A$;
	\item for $x\in B$, $\tau(x)=x$ if, and only if, $x\in A$;
	 \item for $(b_{1},b_{2})\in B^{2}$, 
	 $\tau(b_{1}b_{2})=[\sigma\lambda(b_{2}),\tau(b_{1})^{-1}]\tau(b_{1})\tau(b_{2})$;
	 \item for $b\in B$, setting $a=\tau(b)$ and 
	 $c=\sigma\lambda(b)$, we get $b=ca$ with $c\in C'=\sigma(C)$ 
	 and $a\in A$, this decomposition being unique.
    \end{list}
    We claim that $\tau(\gam{B})\subset\gam{A}$ for all $n\geq 1$. 
    From this, we conclude easily the proof: if $x\in A\cap\gam{B}$, 
    then $x=\tau(x)\in\gam{A}$.
    
    One has $\tau(\gam[1]{B})\subset\gam[1]{A}$. Suppose 
    inductively that $\tau(\gam{B})\subset\gam{A}$ for some $n\geq 1$ and let us prove that 
    $\tau(\gam[n+1]{B})\subset\gam[n+1]{A}$. Suppose first that $x$ is an element of $\gam{B}$. 
    Then using \textit{(iii)} we get:
    \begin{eqnarray*}
    \tau(x^{p}) & = & [\sigma\lambda(x),\tau(x^{p-1})^{-1}]\tau(x^{p-1})\tau(x)\\
    &\vdots&\\
    &=&[\sigma\lambda(x),\tau(x^{p-1})^{-1}][\sigma\lambda(x),\tau(x^{p-2})^{-1}]\cdots[\sigma\lambda(x),\tau(x)^{-1}]\tau(x)^{p}.
    \end{eqnarray*}
    Since $\sigma\lambda(x)\in\gam{C'}$ and, by induction hypotheses, 
    $\tau(x^{i})\in\gam{A}$ for $1\leq i\leq p-1$, we get\linebreak[4] 
    $\tau(x^{p})\in[\gam{C'},\gam{A}]\cdot(\gam{A})^{p}\subset\gam[n+1]{A}$ by lemma~\ref{lem:lemme FR} : this 
    prove that $\tau\big((\gam{B})^{p}\big)\subset\gam[n+1]{A}$. Next, let $b\in B$ and 
    $x\in\gam{B}$. Setting $a=\tau(b)\in A$, $y=\tau(x)\in\gam{A}$ by induction hypotheses, 
    $c=\sigma\lambda(b)\in C'$ and $z=\sigma\lambda(x)\in\gam{C'}$, we get
    \begin{eqnarray*}
    \tau\big([b,x]\big) & = & 
    \Big(\sigma\lambda\big([b,x]\big)\Big)^{-1}[b,x]\\
    &=&\big[\sigma\lambda(b),\sigma\lambda(x)\big]^{-1}[b,x]\\
    &=& [c,z]^{-1}[ca,zy]=[z,c]a^{-1}c^{-1}y^{-1}z^{-1}cazy\\
    &=&[z,c]\big(a^{-1}c^{-1}y^{-1}cya\big)\big(a^{-1}y^{-1}c^{-1}z^{-1}czya\big)
    \big(a^{-1}y^{-1}z^{-1}azy\big)\\
    &=&[z,c]\big(a^{-1}[c,y]a\big)\big(a^{-1}y^{-1}[c,z]ya\big)\big(a^{-1}y^{-1}ay\big)\big(y^{-1}a^{-1}z^{-1}azy\big)\\
    &=&[z,c]\big(a^{-1}[c,y]a\big)\big(a^{-1}y^{-1}[c,z]ya\big)[a,y]\big(y^{-1}[a,z]y\big)\\
    &=&\Big[[c,z],\big(a^{-1}[y,c]a\big)\Big]\big(a^{-1}[c,y]a\big)[z,c]\big(a^{-1}y^{-1}[c,z]ya\big)[a,y]\big(y^{-1}[a,z]y\big)\\
    &=&\Big[[c,z],\big(a^{-1}[y,c]a\big)\Big]\big(a^{-1}[c,y]a\big)\Big[[c,z],ya\Big][a,y]\big(y^{-1}[a,z]y\big).
    \end{eqnarray*}
    Now, $[c,z]\in[C',\gam{C'}]\subset\gam[n+1]{C'}$, $[y,c]\in[\gam{A},C']\subset\gam[n+1]{A}$ 
    (lemma~\ref{lem:lemme FR}) thus $\big[[c,z],\big(a^{-1}[y,c]a\big)\big]\in\gam[n+1]{A}$. Then, 
    $\big[[c,z],ya\big]\in[\gam[n+1]{C'},A]\subset\gam[n+1]{A}$, 
    $[a,y]\in[A,\gam{A}]\subset\gam[n+1]{A}$ and $[a,z]\in[A,\gam{C'}]\subset\gam[n+1]{A}$. Thus, 
    $\tau([b,x])\in\gam[n+1]{A}$ and $\tau([B,\gam{B}])\subset\gam[n+1]{A}$.
\end{prooftext}

\begin{cor}\label{cor:residually p-finite}
        Let $\xymatrix{
1 \ar[r] & A\ar@{^{(}->}[r] & 
B\ar[r]^{\lambda} & 
C\ar[r]\ar@/^1pc/@{.>}[l]^{\sigma} & 1}$ be a split exact sequence such that $B$ is a $p$-almost direct product of $A$ and $C$. If $A$ and $C$ are residually $p$-finite, then $B$ is residually $p$-finite.
\end{cor}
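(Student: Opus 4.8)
The plan is to apply the criterion recalled at the beginning of this section: a group $G$ is residually $p$-finite if, and only if, $\bigcap_{n\geq 1}\gam{G}=\{1\}$. Since $A$ and $C$ are residually $p$-finite by hypothesis, this gives $\bigcap_{n\geq 1}\gam{A}=\{1\}$ and $\bigcap_{n\geq 1}\gam{C}=\{1\}$, and it therefore suffices to establish that $\bigcap_{n\geq 1}\gam{B}=\{1\}$.

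So let $b\in\bigcap_{n\geq 1}\gam{B}$; I want to show that $b=1$. First I would push $b$ down to $C$: by Theorem~\ref{thm:suite exacte}, for each $n$ the map $\lambda$ restricts to a homomorphism $\lambda_{n}:\gam{B}\to\gam{C}$, so that $\lambda(b)=\lambda_{n}(b)\in\gam{C}$ for every $n$. Hence $\lambda(b)\in\bigcap_{n\geq 1}\gam{C}=\{1\}$, which forces $\lambda(b)=1$, that is $b\in A=\ker{\lambda}$.

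It then remains to locate $b$ inside the filtration of $A$. Since $b\in A$ and $b\in\gam{B}$, we have $b\in A\cap\gam{B}$ for all $n$; but the exactness of the sequence in Theorem~\ref{thm:suite exacte} is precisely the statement that $\ker{\lambda_{n}}=A\cap\gam{B}=\gam{A}$. Thus $b\in\gam{A}$ for every $n\geq 1$, whence $b\in\bigcap_{n\geq 1}\gam{A}=\{1\}$ and $b=1$. This proves $\bigcap_{n\geq 1}\gam{B}=\{1\}$, so $B$ is residually $p$-finite.

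The whole argument is essentially formal once Theorem~\ref{thm:suite exacte} is in hand; the single substantive input is the identity $A\cap\gam{B}=\gam{A}$, which is exactly where the $p$-almost direct product hypothesis is used (via Lemma~\ref{lem:lemme FR}). Accordingly I do not anticipate any genuine obstacle at this stage: all the difficulty has been front-loaded into proving the filtered split exact sequence, and the corollary is merely its residual shadow obtained by intersecting over $n$.
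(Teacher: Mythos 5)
Your proof is correct and is exactly the argument the paper intends: Corollary~\ref{cor:residually p-finite} is stated without proof immediately after Theorem~\ref{thm:suite exacte}, because it follows precisely by intersecting the identities $\lambda(\gamma_{n}^{p}B)\subseteq\gamma_{n}^{p}C$ and $A\cap\gamma_{n}^{p}B=\gamma_{n}^{p}A$ over all $n$, as you do. One caveat worth recording: the criterion you invoke (residual $p$-finiteness $\Leftrightarrow\bigcap_{n\geq 1}\gamma_{n}^{p}G=\{1\}$) is stated in the paper, following Paris, only for \emph{finitely generated} groups --- the forward implication (which you apply to $A$ and $C$) holds for arbitrary groups, but the converse (which you apply to $B$) can fail without finite generation, so the corollary carries the implicit hypothesis that $A$ and $C$, hence $B$, are finitely generated, as they are in every application made in the paper.
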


\subsection{Augmentation ideals}

Given a group $G$ and $\K=\Z$ or $\FF_2$ we will denote by
$\K[G]$ the  group ring of $G$ over $\K$ and by
$\overline{\K [G]}$  the augmentation ideal of $G$.
The group ring $\K[G]$ is filtered by the powers $\overline{\K [G]}^j$ of $\overline{\K [G]}$
and we can define the associated graded algebra $gr(\K[G])=\oplus \overline{\K [G]}^j/\overline{\K [G]}^{j+1}$.

The following theorem provides a decomposition formula for the augmentation ideal of a $2-$almost direct product 
(see Theorem 3.1 in \cite{Pa} for an analogous in the case  of almost 
direct products).\\
Let $A\rtimes C$ be a semi-direct product between two groups $A$ and 
$C$. It is a classical result that the map $a\otimes c \mapsto ac$ 
induces a $\K$-isomorphism from $\K[A]\otimes\K[C]$ to $\K[A\rtimes 
C]$. Identifying these two $\K$-modules, we have the following:

\begin{thm} \label{thm:augmentation}
If $A\rtimes C$ is  a $2-$almost direct product then :
$$ \overline{\FF_2 [A \rtimes C]}^k= \displaystyle \sum_{i+h=k}   \overline{\FF_2 [A]}^i \otimes \overline{\FF_2 [C]}^h \qquad \mbox{for all} \;  k.$$
\end{thm}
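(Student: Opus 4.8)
The plan is to prove the two inclusions
$$\overline{\FF_2 [A \rtimes C]}^k \supseteq \sum_{i+h=k} \overline{\FF_2 [A]}^i \otimes \overline{\FF_2 [C]}^h \quad\text{and}\quad \overline{\FF_2 [A \rtimes C]}^k \subseteq \sum_{i+h=k} \overline{\FF_2 [A]}^i \otimes \overline{\FF_2 [C]}^h$$
separately, the first being routine and the second carrying all the weight of the $2$-almost direct product hypothesis. First I would record the multiplication rule under the identification $\K[A]\otimes\K[C]\cong\K[A\rtimes C]$: for $a,a'\in A$ and $c,c'\in C$, one has $(a\otimes c)(a'\otimes c')=a\,(c\cdot a')\otimes cc'$, where $c\cdot a'$ denotes the image of $a'$ under the action of $c$. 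I would also fix notation for the augmentation ideals, writing $\overline{\FF_2[A]}$ as generated additively by elements $a-1$ ($a\in A$) and similarly for $C$, and note that $\overline{\FF_2[A\rtimes C]}$ is spanned by $g-1$ for $g\in A\rtimes C$, together with the elementary identity $ac-1=(a-1)+(c-1)+(a-1)(c-1)$.

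For the inclusion $\supseteq$, I would argue that $\overline{\FF_2[A]}\otimes\FF_2[C]$ and $\FF_2[A]\otimes\overline{\FF_2[C]}$ both sit inside $\overline{\FF_2[A\rtimes C]}$, since $a-1$ and $c-1$ map into the augmentation ideal of the product; then a product of $i$ factors from $\overline{\FF_2[A]}$ and $h$ factors from $\overline{\FF_2[C]}$ lands in $\overline{\FF_2[A\rtimes C]}^{i+h}=\overline{\FF_2[A\rtimes C]}^k$. The only subtlety is that when one moves an $A$-factor past a $C$-factor the action intervenes, but the action sends $\overline{\FF_2[A]}$ into itself, so powers of $\overline{\FF_2[A]}$ are preserved and the degree count is unaffected.

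The hard part will be the reverse inclusion $\subseteq$, and this is exactly where being a $2$-almost direct product is needed. The natural strategy is induction on $k$. It suffices to show that $\overline{\FF_2[A\rtimes C]}^k$ is generated by products $x_1\cdots x_k$ where each $x_j$ is either some $a-1$ or some $c-1$, and then to sort these factors into an $A$-block followed by a $C$-block. Commuting a factor $c-1$ to the left past a factor $a-1$ produces, via $(c-1)(a-1)=(a-1)(c-1)+\big((c\cdot a) - a\big)+\big((c\cdot a)-a\big)(c-1)$ type identities, a correction term involving $(c\cdot a)\,a^{-1}-1$; the $2$-almost direct product hypothesis says precisely that $C$ acts trivially on $A/\gamma_2^2 A=H_1(A;\FF_2)$, which translates to $(c\cdot a)a^{-1}-1\in\overline{\FF_2[A]}^2$. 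This raises the $A$-degree of the correction term by one while lowering the number of factors, so an induction on the number of $C$-factors that still precede $A$-factors (a bubble-sort count), nested inside the induction on $k$, closes the argument with all correction terms landing in the claimed sum. I would present the key commutation identity carefully, verify that the hypothesis gives the crucial degree jump, and then let the sorting induction run; the main obstacle is bookkeeping the degree shifts so that every rearrangement stays inside $\sum_{i+h=k}\overline{\FF_2[A]}^i\otimes\overline{\FF_2[C]}^h$ rather than merely inside $\overline{\FF_2[A\rtimes C]}^k$.
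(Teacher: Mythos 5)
Your overall strategy coincides with the paper's (which follows Papadima): set $R_k=\sum_{i+h=k}\overline{\FF_2[A]}^i\otimes\overline{\FF_2[C]}^h$, reduce the hard inclusion to ``special'' products of factors $(a-1)$ and $(c-1)$, and sort the factors by a commutation identity whose correction term is controlled by the hypothesis. Your key identity $(c-1)(a-1)=(a-1)(c-1)+((c\cdot a)-a)+((c\cdot a)-a)(c-1)$ is correct, and you correctly isolate the crucial point that triviality of the $C$-action on $H_1(A;\FF_2)=A/\gamma_2^2A$ yields $(c\cdot a)a^{-1}-1\in\overline{\FF_2[A]}^2$; in the paper this is exactly where Chen's result on commutators and the characteristic-two identity $x^2-1=(x-1)^2$ are used, and it is the only place where $\FF_2$ matters.

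There is, however, a genuine gap in your termination argument: the ``bubble-sort count'' does not decrease under the rewriting. The corrections replace the single $C$-factor $(c-1)$ by \emph{two} $A$-factors coming from $\overline{\FF_2[A]}^2$, so every $C$-factor lying to the left of the swapped pair acquires an extra $A$-factor to its right, and the number of $(C,A)$-inversions can strictly increase. Concretely, for $e=(c_1-1)(c_2-1)(c_3-1)(c-1)(a-1)$ the correction has the shape $(c_1-1)(c_2-1)(c_3-1)(a'-1)(a''-1)\cdot c$, with $6$ inversions instead of $4$. (The alternative reading of your measure, the number of $C$-factors having some $A$-factor after them, fails instead on the swapped term $e'$ whenever further $A$-factors lie to the right of the pair.) The paper's fix is to induct on the \emph{lexicographic order} of the type vector: the swapped term and the corrections agree with $e$ up to the position of the moved $C$-factor and carry a $0$ where $e$ has a $1$, hence are strictly lex-smaller, and lex order on $\{0,1\}^k$ is well-founded. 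A second point your sketch leaves unaddressed: your correction terms carry a trailing group element of $A$ (from $((c\cdot a)-a)=((c\cdot a)a^{-1}-1)a$), and pushing an element $a\in A$ rightwards past a factor $(c'-1)$ destroys the special form, since $ac'a^{-1}\notin C$ in general; this would also change the length of the products, which threatens the well-foundedness of any induction on types. The paper avoids both problems by massaging the corrections into the form $(a'-1)(a''-1)c$ --- absorbing stray elements of $A$ into adjacent factors via $(f-1)\alpha=(f\alpha-1)-(\alpha-1)$ --- so that only a trailing $c\in C$ must be moved to the right; conjugation by $c$ preserves both $A$-factors and $C$-factors, hence types, and keeps the number of factors equal to $k$.
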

\begin{proof}
We sketch the proof which is almost verbatim the same as the proof of Theorem 3.1 in \cite{Pa}.
Let  $R_k=\displaystyle \sum_{i+h=k}   \overline{\FF_2 [A]}^i \otimes 
\overline{\FF_2 [C]}^h$; $R_k$ is a descending filtration on $\FF_2 
[A] \otimes \FF_2 [C]$, and with the above identification, we get that $R_k \subset  
\overline{\FF_2 [A \rtimes C]}^k$. To verify the other inclusion we 
have to check that $\displaystyle \prod_{j=1}^k (a_j c_j -1) \in R_k$ 
for every $a_1,\ldots, a_k$ in $A$ and $c_1,\ldots,c_k$ in $C$. 
Actually it is enough to verify that $e=\displaystyle \prod_{j=1}^k 
(e_j -1) \in R_k$ either $e_j \in A$ or $e_j \in C$  (see Theorem 3.1 
in \cite{Pa} for a proof of this fact):  we call $e$ a \emph{special} 
element. We associate to a special element $e$ an element in 
$\{0,1\}^k$: let $type(e)=(\delta(e_1), \ldots, \delta(e_k))$ where 
$\delta(e_j)=0$ if $e_j \in A$ and $\delta(e_j)=1$ if $e_j \in C$. We 
will say that the special element $e$ is \emph{standard} if
 
 $$\mbox{type}(e)=(\overbrace{0, \ldots,0}^i,\overbrace{1, \ldots,1}^h)$$
 
 In this case $e \in  \overline{\FF_{2} [A]}^i \otimes 
 \overline{\FF_{2} [C]}^h \subset R_k$ and we are done.
 We claim that we can reduce all special elements to linear combinations of standard elements.
 If $e$ is not standard, then it must be of the form
 
 $$e= \displaystyle \prod_{i=1}^r(a_i-1) \prod_{j=1}^s(c_i-1) (c-1)(a-1) \prod_{l=1}^t(e_i-1)$$
 where $a_1,\ldots, a_r,  a \in A$, $c_1,\ldots, c_s,  c \in A$, $\tilde{e}=\prod_{l=1}^t(e_i-1)$ is special and
 $r+s+t+2=k$.  Therefore 
  $$\mbox{type}(e)=(\overbrace{0, \ldots,0}^r,\overbrace{1, 
  \ldots,1}^s,1,0,\delta(e_1), \ldots,\delta(e_t))\, .$$

Now we can use the assumption that $A\rtimes C$ is a $2-$almost direct product to claim that one has commutation relations in 
$\Z[A\rtimes C] $ expressing the difference  $(c-1)(a-1)-  (a-1)(c-1)$ as a linear combination of terms of the form
$$(a'-1)(a''-1)c \qquad \mbox{with}\; a', a'' \in A $$
for any $a\in A$ and $c\in C$. In fact,
$$
(c-1)(a-1)-  (a-1)(c-1)=ca-ac=(cac^{-1}a^{-1}-1)ac=(f-1)ac
$$
where $f=[c^{-1},a^{-1}]\in[C,A]\subset \gamma_2^2(A)$ by lemma~\ref{lem:lemme FR}.  We can decompose $f$ as $f=h_1 k_1 \cdots h_m k_m$
where,  for $j=1,\ldots, m$,  $h_j$ belongs to $[A, A]$ and $k_j = (k'_j)^2$ for some $k'_j \in A$.
One knows (see for instance \cite{Ch} p. 194) that   for $j=1,\ldots, m$  $(h_j -1)$ is a linear combination of terms of the form
$$
(h'_j-1)(h''_j-1)\alpha_{j}  \qquad \mbox{with}\; h'_j, 
h''_j,\alpha_{j}\in A.
$$
On the other hand  for $j=1,\ldots, m$  we have also that 
$$
(k_j-1)= (k'_j-1)(k'_j-1) \qquad \mbox{with}\; k'_j\in A 
\quad\mbox{since the coefficients are }\FF_{2}.
$$
Then, recalling that $(hk-1)=(h-1)k+(k-1)$ for any $h,k\in A$, we can conclude 
that  $f-1$ can be rewritten as 
 a linear combination of terms of the form
$$
(f'-1)(f''-1)\alpha  \qquad \mbox{with}\; f',f'',\alpha\in A \label{3.1}
$$
and that  $(c-1)(a-1)-  (a-1)(c-1)$
is a linear combination of terms of the form 
$$
(f'-1)(f''-1)\alpha c  \qquad \mbox{with}\; f',f'',\alpha\in A.
$$
Rewriting $(f''-1)\alpha$ as $(f''\alpha-1)-(\alpha-1)$
we obtain  that the difference  $(c-1)(a-1)-  (a-1)(c-1)$ can be seen as a linear combination of terms of the form
$$(a'-1)(a''-1)c \qquad \mbox{with}\; a', a'' \in A. $$

Therefore $e$ can be rewritten as a sum whose first term is the special element   
$$e'=\displaystyle \prod_{i=1}^r(a_i-1) \prod_{j=1}^s(c_i-1) (a-1)(c-1) \prod_{l=1}^t(e_i-1)$$
and whose second term is a linear  combination of elements of the form $e'' c$ where
$$e''=\displaystyle \prod_{i=1}^r(a_i-1) \prod_{j=1}^s(c_i-1) 
(a'-1)(a''-1) \prod_{l=1}^t(c e_ic^{-1}-1)c.$$
Using the lexicographic order from the left, one has type$(e) > $type$(e')$ and type$(e) > $type$(e'')$.

By induction on the lexicographic order we  infer that $e'$ and $e''$ belong to $R_k$: since
$R_k \cdot c \subset R_k$ for any $c \in C$ it follows that $e$  belongs to $R_k$ and we are done.
\end{proof}

\section{The closed case}

\subsection{A presentation of $\boldsymbol{P_{n}(N_{g})}$ and induced identities}

We recall a group presentation of $P_{n}(N_{g})$ given in~\cite{GG3}:
the geometric interpretation of generators is provided in Figure \ref{fig:generators}.

\settowidth{\longueur}{{\underline{\bf relations: }}}
\settowidth{\longueurA}{$B_{s,j}^{-1}B_{r,j}^{-1} B_{s,j} B_{r,j} B_{i,j} B_{r,j}^{-1} 
    B_{s,j}^{-1} B_{r,j} B_{s,j}$}
\settowidth{\longueurB}{$\text{if }i<r<s<j\text{ or }r<s<i<j$}

\begin{thm}[\cite{GG3}]\label{thm:Presentation cas ferme}
    For $g\geq 2$ and $n\geq 1$, $P_{n}(N_{g})$ has the following 
    presentation:
    
    \medskip
    \noindent\ \underline{\bf generators:} $(B_{i,j})_{1\leq i<j\leq n}$ 
    and $(\rho_{k,l})_{\build{}{\scriptstyle \!\!1\leq l\leq 
    g}{\scriptstyle 1\leq k\leq n}}$.
    
    \medskip
    \noindent$\begin{array}[t]{lcl}
    \multicolumn{3}{l}{\underline{\bf relations:}\textnormal{ 
    (a)}\text{ for all }1\leq 
    i<j\leq n\textnormal{ and }1\leq r<s\leq n,}\\
    B_{r,s}B_{i,j}B_{r,s}^{-1}&=&
    \left\{\begin{array}{lll}
    B_{i,j} & \text{if }i<r<s<j\text{ or }r<s<i<j & (\textnormal{a}_{1})\\
    B_{i,j}^{-1} B_{r,j}^{-1}  B_{i,j} B_{r,j} B_{i,j} & \text{if 
    $r<i=s<j$}& (\textnormal{a}_{2})\\
    B_{s,j}^{-1} B_{i,j} B_{s,j} & \text{if $i=r<s<j$}& (\textnormal{a}_{3})\\
    B_{s,j}^{-1}B_{r,j}^{-1} B_{s,j} B_{r,j} B_{i,j} B_{r,j}^{-1} 
    B_{s,j}^{-1} B_{r,j} B_{s,j}   & \text{if $r<i<s<j$}& (\textnormal{a}_{4})
    \end{array}\right.\\
    \multicolumn{3}{l}{}\\
    \multicolumn{3}{l}{\hspace*{\longueur}
    \textnormal{ (b)}\text{ for all 
    }1\leq i<j\leq n\text{ and }1\leq k,l\leq g,}\\
    \rho_{i,k}\rho_{j,l}\rho_{i,k}^{-1}&=&
    \left\{\begin{array}{p{\longueurA}p{\longueurB}l}
    $\rho_{j,l}$ & $\text{if }k<l$ & (\textnormal{b}_{1})\\
    $\rho_{j,k}^{-1} B_{i,j}^{-1}  \rho_{j,k}^2$ & $\text{if }k=l$ & 
    (\textnormal{b}_{2})\\
    $\rho_{j,k}^{-1} B_{i,j}^{-1}\rho_{j,k} B_{i,j}^{-1} \rho_{j,l}
    B_{i,j} \rho_{j,k}^{-1} B_{i,j} \rho_{j,k}$ & $\text{if }k>l$ & 
    (\textnormal{b}_{3})
    \end{array}\right.\\
    \multicolumn{3}{l}{}\\
    \multicolumn{3}{l}{\hspace*{\longueur} \textnormal{ (c)}
    \text{ for all }1\leq i\leq n,\ 
    \rho_{i,1}^{2}\cdots\rho_{i,g}^{2}=T_{i}\text{ where 
    }T_{i}=B_{1,i}\cdots B_{i-1,i}B_{i,i+1}\cdots 
    B_{i,n}\qquad\quad(\textnormal{c})}\\
    \multicolumn{3}{l}{}\\
    \multicolumn{3}{l}{\hspace*{\longueur}
    \textnormal{ (d)}\text{ for all 
    }1\leq i<j\leq n,\ 1\leq k\leq n,\ k\neq j\text{ and }1\leq l\leq g,}\\
    \rho_{k,l}B_{i,j}\rho_{k,l}^{-1}&=&
    \left\{\begin{array}{p{\longueurA}p{\longueurB}l}
    $B_{i,j}$ & $\text{if }k<i\text{ or }j<k$ & (\textnormal{d}_{1})\\
    $\rho_{j,l}^{-1} B_{i,j}^{-1} \rho_{j,l}$ & $\text{if }k=i$ & 
    (\textnormal{d}_{2})\\
    $\rho_{j,l}^{-1} B_{k,j}^{-1} \rho_{j,l} B_{k,j}^{-1} B_{i,j} 
    B_{k,j} \rho_{j,l}^{-1} B_{k,j} \rho_{j,l}$ & $\text{if }i<k<j$ & 
    (\textnormal{d}_{3})
    \end{array}\right.\\
\end{array}$
\end{thm}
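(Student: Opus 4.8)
The plan is to prove the presentation by induction on $n$, using the Fadell--Neuwirth fibration obtained by forgetting the last marked point. For $g\geq 2$ the surface $N_{g}$ is aspherical (its universal cover is $\R^{2}$), and configuration spaces of aspherical surfaces are themselves aspherical, so $\pi_{2}\bigl(F_{n-1}(N_{g})\bigr)=0$. The map $F_{n}(N_{g})\to F_{n-1}(N_{g})$ forgetting $z_{n}$ is then a locally trivial fibration with connected fiber $N_{g}\setminus\{z_{1},\ldots,z_{n-1}\}$, and the homotopy exact sequence collapses to the short exact sequence
$$1 \to \pi_{1}\bigl(N_{g}\setminus\{z_{1},\ldots,z_{n-1}\},\,z_{n}\bigr) \to P_{n}(N_{g}) \to P_{n-1}(N_{g}) \to 1.$$
The base case $n=1$ is $P_{1}(N_{g})=\pi_{1}(N_{g})=\langle \rho_{1,1},\ldots,\rho_{1,g}\mid \rho_{1,1}^{2}\cdots\rho_{1,g}^{2}=1\rangle$, which is exactly relation (c) for $i=1$ (where $T_{1}$ is the empty product).

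For the inductive step I would first identify the fiber. Since $N_{g}$ minus $n-1$ points is a noncompact surface, its fundamental group is free, of rank $1-\chi = g+n-2$. I would exhibit it on the $g+n-1$ geometric loops $\rho_{n,1},\ldots,\rho_{n,g}$ (strand $n$ running through each crosscap) and $B_{1,n},\ldots,B_{n-1,n}$ (strand $n$ encircling each puncture $z_{i}$), subject to the single relation $\rho_{n,1}^{2}\cdots\rho_{n,g}^{2}=B_{1,n}\cdots B_{n-1,n}$, which is relation (c) for $i=n$. This is precisely the genus relation of $N_{g}$ once the removed points are reinstated as small boundary loops; it reduces the redundant generating set to a basis, confirming freeness.

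Then I would assemble the presentation of $P_{n}(N_{g})$ from the standard presentation of a group extension. The generators are those of $P_{n-1}(N_{g})$, available by induction, together with the new fiber generators $B_{i,n}$ and $\rho_{n,l}$. The relations split into three families: (i) lifts of the relators of $P_{n-1}(N_{g})$, corrected by kernel elements — in particular the surface relations (c) for $i<n$ acquire the extra factor $B_{i,n}$, recording that strand $n$ now lies inside the loop $T_{i}$; (ii) the fiber relation (c) for $i=n$; and (iii) for each generator $s$ of $P_{n-1}(N_{g})$ and each fiber generator $y$, a conjugation relation $sys^{-1}=w_{s,y}$ expressing the monodromy action of $s$ on $y$ as a word in the fiber generators. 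Accumulated over all inductive steps, the conjugation relations of type (iii) are exactly the families (a), (b) and (d).

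The main obstacle is the explicit geometric computation of the conjugation words $w_{s,y}$, particularly the long cases (a$_{4}$), (b$_{3}$) and (d$_{3}$). One must fix disc-and-arc representatives for each generator and track carefully how dragging strand $n$ around another strand, or through a crosscap, transforms the chosen loops; the nonorientability is what forces the squared terms $\rho_{j,k}^{2}$ and the inversions $B_{i,j}^{-1}$ appearing in (b$_{2}$), (d$_{2}$) and elsewhere, since a strand crossing a crosscap returns with reversed local orientation. Soundness (that each listed relation genuinely holds in $P_{n}(N_{g})$) is the real geometric content and the delicate part; completeness (that no relation has been omitted) is then automatic from the extension framework, once the free fiber presentation and the monodromy action have been correctly pinned down.
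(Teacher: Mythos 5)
Your proposal is correct and is essentially the approach the paper relies on: the paper quotes this presentation from \cite{GG3}, and the proof it gives of the boundary analogue in the Appendix (Theorem~\ref{thm:PresentationCasBord}) is exactly your scheme --- induction on $n$ via the Fadell--Neuwirth exact sequence, identification of the free punctured-surface fiber, and the three families of extension relations (lifted relations, with relation (c) corrected by the factor $B_{i,n}$; the fiber relation; and the monodromy/conjugation relations yielding families (a), (b), (d)). Like your sketch, the paper leaves the explicit geometric verification of the conjugation words to the reader.
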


\def\svgwidth{0.5\textwidth}
\setlength{\unitlength}{\svgwidth}
\begin{figure}[h]
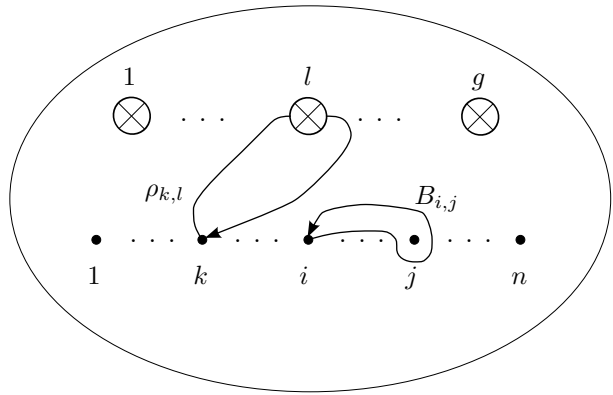

    \begin{center}
	\Generateurs
	\caption{Generators of $P_{n}(N_{g})$}
	\label{fig:generators}
    \end{center}
\end{figure}

\bigskip
For $1\leq k\leq g$, let us consider the element $a_{k}$ in $P_{n}(N_{g})$ given by 
$a_{k}=\rho_{k,g-1}\rho_{k,g}$ and set $U=a_{n}\cdots a_{2}$.

\begin{lem}\label{lem:relations}
    The following relations holds in $P_{n}(N_{g})$:
    
    \medskip\renewcommand{\arraystretch}{1.4}\setcounter{liste}{0}\setcounter{listes}{4}
    $\begin{array}{cl@{\hspace*{1cm}}l}
        \textit{(\svte)} & [\rho_{i,k},\rho_{j,k}^{-1}]=B_{i,j}^{-1}\text{ for }1\leq 
	i<j\leq n\text{ and }1\leq k\leq g; & (\textnormal{\eqsvte})\\
		
	\textit{(\svte)} & U \text{ commutes with }\rho_{1,l} \text{ for } 1\leq l\leq g-2;& 
	(\textnormal{\eqsvte}_{1})\\

	\textit{(\svte)} & [\rho_{1,g-1},U^{-1}]=T_{1}^{-1};& 
	(\textnormal{\eqprec}_{2})\\
	
	\textit{(\svte)} & a_{k}a_{j}a_{k}^{-1} \text{ commutes with }B_{i,k} \text{ for }1\leq 
	i<j<k\leq n;& (\textnormal{\eqsvte})\\
	
	\textit{(\svte)} & a_{n}a_{n-1}\cdots a_{1}\text{  commutes with }B_{j,k} \text{ for } 1\leq 
	j<k\leq n;& (\textnormal{\eqsvte})\\
	
	\textit{(\svte)} & U \text{ commutes with }B_{i,j} \text{ for 
	} 2\leq i<j\leq n;& (\textnormal{\eqsvte})\\
	
	\textit{(\svte)} & a_{n}a_{n-1}\cdots a_{1} \text{ commutes 
	with }T_{1};& (\textnormal{\eqsvte})\\
	
	\textit{(\svte)} & T_{1} \text{ commutes with }B_{j,k} \text{ 
	for } 2\leq j<k\leq n;& (\textnormal{\eqsvte})\\
    \end{array}$
\end{lem}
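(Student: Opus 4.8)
The plan is to derive all eight identities directly from the presentation of Theorem~\ref{thm:Presentation cas ferme} together with the definitions $a_k=\rho_{k,g-1}\rho_{k,g}$ and $U=a_n\cdots a_2$; no input beyond the relations $(\mathrm a)$--$(\mathrm d)$ is needed. It is convenient to prove $(\mathrm e)$ first, since it is the mechanism turning two $\rho$'s with a common second index into a $B$-generator, and it feeds into $(\mathrm f_2)$. Writing $P=\rho_{i,k}$, $Q=\rho_{j,k}$, $B=B_{i,j}$ with $i<j$, relation $(\mathrm b_2)$ reads $PQP^{-1}=Q^{-1}B^{-1}Q^{2}$, while $(\mathrm d_2)$ taken with second index $k$ gives $PBP^{-1}=Q^{-1}B^{-1}Q$, hence $PB^{-1}P^{-1}=Q^{-1}BQ$. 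Multiplying, $PB^{-1}QP^{-1}=(Q^{-1}BQ)(Q^{-1}B^{-1}Q^{2})=Q$, which rearranges to $P^{-1}QP=B^{-1}Q$, i.e. $[\rho_{i,k},\rho_{j,k}^{-1}]=B_{i,j}^{-1}$. This settles $(\mathrm e)$.

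For the relations involving $\rho_{1,\bullet}$ and $U=\prod_{k=n}^{2}\rho_{k,g-1}\rho_{k,g}$ I would move $\rho_{1,l}$ through the product factor by factor. When $l\le g-2$ every second index occurring in $U$ is strictly larger than $l$, so $(\mathrm b_1)$ makes $\rho_{1,l}$ commute with each factor, giving $(\mathrm f_1)$ at once. For $(\mathrm f_2)$ the index $g-1$ now coincides with that of the factors $\rho_{k,g-1}$: relation $(\mathrm b_1)$ still lets $\rho_{1,g-1}$ pass each $\rho_{k,g}$, while $(\mathrm e)$ converts each passage across $\rho_{k,g-1}$ into a factor $B_{1,k}^{-1}$, yielding $\rho_{1,g-1}^{-1}U\rho_{1,g-1}=\prod_{k=n}^{2}B_{1,k}^{-1}a_k$. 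Then $[\rho_{1,g-1},U^{-1}]=\bigl(\prod_{k=n}^{2}B_{1,k}^{-1}a_k\bigr)U^{-1}$, and the $a_k$ cancel against $U^{-1}$ because each $a_k$ commutes with the $B_{1,m}^{-1}$ ($m<k$) standing to its right, which is exactly case $(\mathrm d_1)$. The telescoping leaves $B_{1,n}^{-1}\cdots B_{1,2}^{-1}=T_1^{-1}$, proving $(\mathrm f_2)$.

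The commutation statements I would reduce to one another and then to $(\mathrm a)$ and $(\mathrm d)$. Since $T_1=B_{1,2}\cdots B_{1,n}$ and every $B_{1,k}$ has $1<k$, statement $(\mathrm j)$ is immediate from $(\mathrm h)$; and since $a_1$, being supported on strand $1$, commutes with every $B_{i,j}$ with $2\le i<j$ by $(\mathrm d_1)$, statement $(\mathrm i)$ follows from $(\mathrm h)$ by stripping the last factor from $a_n\cdots a_1=Ua_1$. Relation $(\mathrm k)$ is the classical fact that the total linking element $T_1=B_{1,2}\cdots B_{1,n}$ of strand $1$ is invariant under the braiding of strands $2,\dots,n$, a direct computation from $(\mathrm a_1)$--$(\mathrm a_4)$.

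The genuine effort lies in $(\mathrm g)$ and $(\mathrm h)$, which I expect to be the main obstacle. Here one conjugates $B_{i,k}$ (resp. $B_{j,k}$) by $a_k a_j a_k^{-1}$ (resp. by $a_n\cdots a_1$) and must verify the result is unchanged, applying the conjugation rules $(\mathrm d_1)$--$(\mathrm d_3)$ repeatedly and tracking the $\rho$- and $B$-words produced. The delicate point is that when the $\rho$-strand equals the \emph{larger} index of the $B$-generator the move is not covered by $(\mathrm d)$, whose hypothesis excludes that case; there one must first re-express the conjugation through $(\mathrm b_2)$ and $(\mathrm e)$, which is precisely why $(\mathrm e)$ is proved at the outset. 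Arranging the case analysis so that the numerous $\rho$-correction terms cancel in pairs is the bookkeeping heart of the argument; geometrically it reflects that the curve underlying $a_k a_j a_k^{-1}$ can be isotoped off the one carrying $B_{i,k}$, a convenient sanity check while running the algebra.
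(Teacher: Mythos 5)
Your handling of the easy half of the lemma is sound: the algebraic derivation of $(\mathrm{e})$ from $(\mathrm{b}_{2})$ and $(\mathrm{d}_{2})$ is correct (and is in fact a nice alternative to the paper, which verifies $(\mathrm{e})$ by a braid isotopy, Figure~\ref{fig:identityE}); your proofs of $(\mathrm{f}_{1})$ and $(\mathrm{f}_{2})$ coincide with the paper's, and your reductions of $(\mathrm{i})$ and $(\mathrm{j})$ to $(\mathrm{h})$ are exactly the paper's arguments. The genuine gap is that $(\mathrm{g})$, $(\mathrm{h})$ and $(\mathrm{k})$ are never actually proved. For $(\mathrm{g})$ and $(\mathrm{h})$ you only describe a strategy (``applying the conjugation rules $(\mathrm{d}_{1})$--$(\mathrm{d}_{3})$ repeatedly'') and concede that making ``the numerous $\rho$-correction terms cancel in pairs'' is the heart of the matter; exhibiting that cancellation \emph{is} the proof, and it is not exhibited. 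Worse, as you yourself note, conjugation of $B_{j,k}$ by a $\rho$-generator on strand $k$ (the larger index) is not covered by $(\mathrm{d})$ at all, so it is not even clear that your proposed recursion closes up. For $(\mathrm{k})$, asserting it is ``a direct computation from $(\mathrm{a}_{1})$--$(\mathrm{a}_{4})$'' is unsubstantiated: the individual factors $B_{1,m}$ of $T_{1}$ do \emph{not} commute with $B_{j,k}$ when $j\leq m\leq k$, so the invariance holds only for the full product and needs a genuine multi-case cancellation argument (or a picture), which you do not supply.

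For comparison, the paper's way around exactly the obstruction you flag is to verify the three identities $(\mathrm{e})$, $(\mathrm{g})$ and $(\mathrm{k})$ geometrically, by isotopies of the corresponding braids (Figures~\ref{fig:identityE}, \ref{fig:identityG}, \ref{fig:identityK}), and then to get $(\mathrm{h})$ algebraically from $(\mathrm{g})$ via the key intermediate identity
\[
a_{j}B_{j,k}a_{j}^{-1}=a_{k}^{-1}B_{j,k}a_{k},
\]
which follows from $(\mathrm{d}_{2})$ and $(\mathrm{b}_{1})$ alone. With it, in $a_{n}\cdots a_{1}B_{j,k}a_{1}^{-1}\cdots a_{n}^{-1}$ the factors $a_{1},\ldots,a_{j-1}$ pass through by $(\mathrm{d}_{1})$, the factor $a_{j}$ is traded for conjugation by $a_{k}$, the resulting block $a_{k}a_{k-1}\cdots a_{j+1}a_{k}^{-1}$ commutes with $B_{j,k}$ precisely by $(\mathrm{g})$, and the remaining $a_{k+1},\ldots,a_{n}$ pass through by $(\mathrm{d}_{1})$. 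Your outline lacks this pivot: everything funnels through $(\mathrm{g})$, and $(\mathrm{g})$ is the one statement for which neither you nor the paper has a purely relation-theoretic argument --- the paper settles it by geometry, while you leave it as ``bookkeeping''. Until $(\mathrm{g})$, $(\mathrm{h})$ and $(\mathrm{k})$ are established by some actual argument, geometric or algebraic, the proposal is an outline rather than a proof.
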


\begin{proof}
    Some of these identities can easily be verified drawing 
    corresponding braids. This is the case for example for the first,
    the fourth and the eighth ones (see figure~\ref{fig:identityE},~\ref{fig:identityG}
    and~\ref{fig:identityK}). Let us give an algebraic proof for the 
    others.
    
    \def\svgwidth{0.7\textwidth}
    \setlength{\unitlength}{\svgwidth}
    \begin{figure}[h]
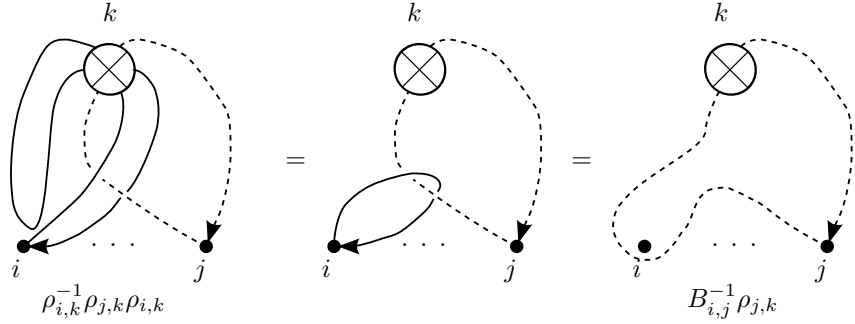

	\begin{center}
	    \identityE
	    \caption{identity $(e)$}
	    \label{fig:identityE}
	\end{center}
    \end{figure}
    
    \def\svgwidth{0.6\textwidth}
    \setlength{\unitlength}{\svgwidth}
    \begin{figure}[h]
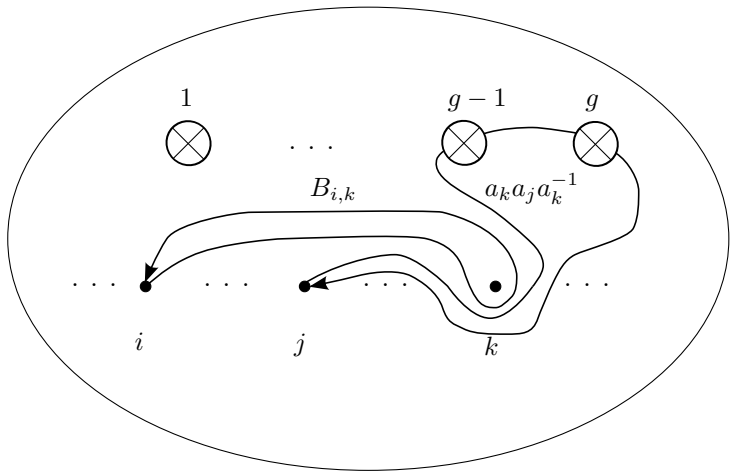

	\begin{center}
	    \identityG
	    \caption{identity $(g)$}
	    \label{fig:identityG}
	\end{center}
    \end{figure}

    \def\svgwidth{0.4\textwidth}
    \setlength{\unitlength}{\svgwidth}
    \begin{figure}[h]
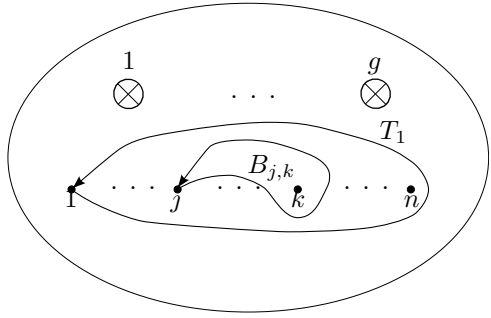

	\begin{center}
	    \identityK
	    \caption{identity $(k)$}
	    \label{fig:identityK}
	\end{center}
    \end{figure}

    \medskip
    \textit{(2)} This is a direct consequence of the definitions of 
    the $a_{i}$'s and $U$, and relations $(\textnormal{b}_{1})$.

    \medskip
    \textit{(3)} By relation ($\textnormal{b}_{1}$), $\rho_{1,g-1}$ commutes with
    $\rho_{j,g}$ for $2\leq j\leq n$. Thus, using relation 
    ($\textnormal{e}$), we 
    get:
    \begin{eqnarray*}
	\rho_{1,g-1}^{-1}U\rho_{1,g-1}&=&\rho_{1,g-1}^{-1}a_{n}\cdots 
	a_{2}\rho_{1,g-1}\\
	&=&\rho_{1,g-1}^{-1}(\rho_{n,g-1}\rho_{n,g})\cdots(\rho_{2,g-1}\rho_{2,g})\rho_{1,g-1}\\
	&=&(B_{1,n}^{-1}\rho_{n,g-1}\rho_{n,g})\cdots(B_{1,2}^{-1}\rho_{2,g-1}\rho_{2,g})\\
	&=& B_{1,n}^{-1}B_{1,n-1}^{-1}\cdots 
	B_{1,2}^{-1}(\rho_{n,g-1}\rho_{n,g})\cdots(\rho_{2,g-1}\rho_{2,g})\quad\text{by (}\textnormal{d}_{1})\\
	&=& T_{1}^{-1}U.
    \end{eqnarray*}
    
    \medskip
    \textit{(5)} Let $j$ and $k$ be integers such that $1\leq j<k\leq 
    n$. By $(\textnormal{d}_{1})$, 
    $a_{1},\ldots,a_{j-1}$ commute with $B_{j,k}$. Then, one has
    \begin{eqnarray*}
	a_{j}B_{j,k}a_{j}^{-1} &=& 
	\rho_{j,g-1}\rho_{j,g}B_{j,k}\rho_{j,g}^{-1}\rho_{j,g-1}^{-1}\\
	&=& 
	\rho_{j,g-1}\rho_{k,g}^{-1}B_{j,k}^{-1}\rho_{k,g}\rho_{j,g-1}^{-1}\quad \text{by }(\textnormal{d}_{2})\\
	&=& 
	\rho_{k,g}^{-1}\rho_{j,g-1}B_{j,k}^{-1}\rho_{j,g-1}^{-1}\rho_{k,g}\quad \text{by }(\textnormal{b}_{1})\\
	&=& 
	\rho_{k,g}^{-1}\rho_{k,g-1}^{-1}B_{j,k}\rho_{k,g-1}\rho_{k,g}\quad \text{by }(\textnormal{d}_{2})\\
	&=& a_{k}^{-1}B_{j,k}a_{k}
    \end{eqnarray*}
    and we get
    \begin{eqnarray*}
        a_{n}\cdots a_{1}B_{j,k}a_{1}^{-1}\cdots a_{n}^{-1} & = & 
	a_{n}\cdots a_{k+1}a_{k}a_{k-1}\cdots 
	a_{j+1}a_{k}^{-1}B_{j,k}a_{k}a_{j+1}^{-1}\cdots 
	a_{k-1}^{-1}a_{k}^{-1}a_{k+1}^{-1}\cdots a_{n}^{-1}\\
         & = & a_{n}\cdots a_{k+1}B_{j,k}a_{k+1}^{-1}\cdots 
	 a_{n}^{-1}\quad\text{by }(\textnormal{g})\\
	 &=& B_{j,k}\quad\text{by }(\textnormal{d}_{1}).
    \end{eqnarray*}
        
    \medskip
    \textit{(6)} By $(d_{1})$, $a_{1}=\rho_{1,g-1}\rho_{1,g}$ commutes with $B_{i,j}$ for $2\leq 
    i<j\leq n$. Thus, relation $(\textnormal{i})$ is a direct 
    consequence of $(\textnormal{h})$.
    
    \medskip
    \textit{(7)} A direct consequence of $(\textnormal{h})$ since $T_{1}=B_{1,2}\cdots B_{1,n}$.

\end{proof}

\subsection{The pure braid group $\boldsymbol{P_{n}(N_{g})}$ is residually 2-finite}

Following~\cite{GG1}, one has, for $g\geq 2$, a split exact sequence
\begin{equation}\label{eq:Splitexactsequence}
\xymatrix{
1 \ar[r] & P_{n-1}(N_{g,1})\ar[r]^{\mu} & P_{n}(N_{g})\ar[r]^-{\lambda} & 
P_{1}(N_{g})=\pi_{1}(N_{g})\ar[r] & 1}
\end{equation}
where $\lambda$ is induced by the map which forgets all strands except the first one, and $\mu$ is 
defined by capping the boundary component by a disc with one marked point (the first strand in 
$P_{n}(N_{g})$).  According to  the definition of $\mu$ and to Theorem~\ref{thm:PresentationCasBord}, 
$\im\mu$ is generated by $\{\rho_{i,k},\ 2\leq i\leq n,\ 
1\leq k\leq g\}\cup\nolinebreak[4]\{B_{i,j},\ 2\leq\nolinebreak[4] i<\nolinebreak[4]j\leq\nolinebreak[4] n\}$.

The section given in~\cite{GG1} is geometric, \emph{i.e.} it is induced by a crossed section at the level of
fibrations. In order to study the action 
of $\pi_{1}(N_{g})$ on $P_{n-1}(N_{g,1})$, we need an algebraic one. 
Recall that $\pi_{1}(N_{g})$ has a group presentation with generators 
$p_{1},\ldots,p_{g}$ and the single relation $p_{1}^{2}\cdots 
p_{g}^{2}=1$. We define the set map  $\sigma:\pi_{1}(N_{g})\to P_{n}(N_{g})$ by setting
\[
\sigma(p_{i})=\begin{cases}
\rho_{1,i} & \text{for } 1\leq i\leq g-3,\\
\rho_{1,g-2}U^{-1} & \text{for } i=g-2,\\
U\rho_{1,g-1} & \text{for } i=g-1,\\
\rho_{1,g}T_{1}^{-1} & \text{for } i=g.
\end{cases}
\]

\begin{prop}\label{prop:section}
    The map $\sigma$ is a well defined  homomorphism satisfying 
    $\lambda\circ\sigma=\id_{\pi_{1}(N_{g)}}$.
\end{prop}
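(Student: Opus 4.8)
The plan is to exploit that $\pi_{1}(N_{g})=\langle p_{1},\ldots,p_{g}\mid p_{1}^{2}\cdots p_{g}^{2}\rangle$ is a one-relator group. Hence the set map $\sigma$ extends to a group homomorphism as soon as the prescribed images of the generators satisfy the single defining relation, i.e. as soon as
\[
\sigma(p_{1})^{2}\sigma(p_{2})^{2}\cdots\sigma(p_{g})^{2}=1 \quad\text{in } P_{n}(N_{g}).
\]
Once this is settled, the identity $\lambda\circ\sigma=\id_{\pi_{1}(N_{g})}$ only has to be checked on the generators $p_{1},\ldots,p_{g}$. I will treat these two verifications separately, the second being immediate and the first being the heart of the matter. (Throughout, $g\geq 3$, so that all four cases in the definition of $\sigma$ make sense.)

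For the retraction property I would first record the effect of $\lambda$ on the generators of $P_{n}(N_{g})$. Since $\lambda$ forgets every strand but the first, it sends every $B_{i,j}$ to $1$ and every $\rho_{i,k}$ with $i\geq 2$ to $1$, while $\rho_{1,k}\mapsto p_{k}$. Consequently $\lambda(U)=1$, because $U=a_{n}\cdots a_{2}$ involves only the $\rho_{i,k}$ with $i\geq 2$, and $\lambda(T_{1})=1$, because $T_{1}=B_{1,2}\cdots B_{1,n}$. Evaluating $\lambda$ on each of the four defining cases of $\sigma$ then yields $\lambda(\sigma(p_{i}))=p_{i}$ at once; for instance $\lambda(U\rho_{1,g-1})=1\cdot p_{g-1}=p_{g-1}$ and $\lambda(\rho_{1,g}T_{1}^{-1})=p_{g}\cdot 1=p_{g}$. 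Thus $\lambda\circ\sigma=\id$.

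The substantial step is the evaluation of $\sigma(p_{1})^{2}\cdots\sigma(p_{g})^{2}$. First I would use relation $(\textnormal{f}_{1})$ of Lemma~\ref{lem:relations} — that $U$ commutes with $\rho_{1,l}$ for $l\leq g-2$ — to rewrite $(\rho_{1,g-2}U^{-1})^{2}=\rho_{1,g-2}^{2}U^{-2}$; after cancelling $U^{-2}$ against the leading $U$ of $(U\rho_{1,g-1})^{2}$ the product becomes
\[
\rho_{1,1}^{2}\cdots\rho_{1,g-3}^{2}\,\rho_{1,g-2}^{2}\,\bigl(U^{-1}\rho_{1,g-1}U\bigr)\rho_{1,g-1}\,(\rho_{1,g}T_{1}^{-1})^{2}.
\]
The crux is to simplify the conjugate $U^{-1}\rho_{1,g-1}U$. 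Relation $(\textnormal{f}_{2})$, namely $[\rho_{1,g-1},U^{-1}]=T_{1}^{-1}$, gives $U^{-1}\rho_{1,g-1}=\rho_{1,g-1}U^{-1}T_{1}$; combining this with the commutation of $T_{1}$ and $a_{n}\cdots a_{1}=Ua_{1}$ supplied by $(\textnormal{j})$, and writing $a_{1}=\rho_{1,g-1}\rho_{1,g}$, I expect to obtain the telescoping identity
\[
\bigl(U^{-1}\rho_{1,g-1}U\bigr)\rho_{1,g-1}\,(\rho_{1,g}T_{1}^{-1})^{2}=\rho_{1,g-1}^{2}\rho_{1,g}^{2}T_{1}^{-1}.
\]
Substituting this back, the whole product collapses to $\rho_{1,1}^{2}\cdots\rho_{1,g}^{2}\,T_{1}^{-1}$, which equals $1$ by relation $(\textnormal{c})$ of Theorem~\ref{thm:Presentation cas ferme} taken with $i=1$, namely $\rho_{1,1}^{2}\cdots\rho_{1,g}^{2}=T_{1}$.

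I expect the second displayed identity to be the only real obstacle. The correction factors $U^{\mp 1}$ and $T_{1}^{-1}$ built into the definition of $\sigma$ are precisely engineered so that the commutators produced by $(\textnormal{f}_{2})$ are reabsorbed through $(\textnormal{j})$, leaving behind only $\rho$-powers together with a single $T_{1}^{-1}$ that is then annihilated by relation $(\textnormal{c})$. The computation demands care because $U$ commutes neither with $\rho_{1,g-1}$ nor with $\rho_{1,g}$, and a priori not with $T_{1}$ either, so each commutator has to be produced and cancelled in the correct order; the relations collected in Lemma~\ref{lem:relations} are exactly what is needed to carry this out.
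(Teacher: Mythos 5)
Your proof is correct and follows essentially the same route as the paper: the retraction check on generators (using $\lambda(U)=\lambda(T_{1})=1$), followed by verification of the single relation $\sigma(p_{1})^{2}\cdots\sigma(p_{g})^{2}=1$ via $(\mathrm{f}_{1})$, $(\mathrm{f}_{2})$, $(\mathrm{j})$ and finally $(\mathrm{c})$ — your ``telescoping identity'' is precisely the middle portion of the paper's displayed computation, and it does hold by exactly the steps you name. Your explicit restriction to $g\geq 3$ is faithful to the argument rather than a loss of generality, since the paper's own computation likewise needs the factor $(\rho_{1,g-2}U^{-1})^{2}$ to supply the $U^{-1}$ that cancels the leading $U$ of $(U\rho_{1,g-1})^{2}$.
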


\begin{proof}
    Since $\lambda(\rho_{1,i})=p_{i}$ for all $1\leq i\leq g$ and 
    $\lambda(U)=\lambda(T_{1})=1$, one has clearly 
    $\lambda\sigma=\id_{\pi_{1}(N_{g)}}$ if $\sigma$ is a group homomorpism. Thus, we 
    have just to prove that $\sigma(p_{1})^{2}\cdots\sigma(p_{g})^{2}=1$:
    \begin{eqnarray*}
	\sigma(p_{1})^{2}\cdots\sigma(p_{g})^{2} & = & 
	(\rho_{1,1}^{2}\cdots\rho_{1,g-3}^{2})(\rho_{1,g-2}U^{-1})^{2}(U\rho_{1,g-1})^{2}(\rho_{1,g}T_{1}^{-1})^{2}\\
	&=&\rho_{1,1}^{2}\cdots\rho_{1,g-3}^{2}\rho_{1,g-2}\underbracket[1pt][4pt]{U^{-1}\rho_{1,g-2}}\rho_{1,g-1}Ua_{1}T_{1}^{-1}\rho_{1,g}T_{1}^{-1}\\
	&=&\rho_{1,1}^{2}\cdots\rho_{1,g-3}^{2}\rho_{1,g-2}^{2}\underbracket[1pt][4pt]{U^{-1}\rho_{1,g-1}}Ua_{1}T_{1}^{-1}\rho_{1,g}T_{1}^{-1}\quad\text{by }(\textnormal{f}_{1})\\
	&=&\rho_{1,1}^{2}\cdots\rho_{1,g-3}^{2}\rho_{1,g-2}^{2}\rho_{1,g-1}U^{-1}T_{1}\underbracket[1pt][4pt]{Ua_{1}T_{1}^{-1}}\rho_{1,g}T_{1}^{-1}\quad\text{by }(\textnormal{f}_{2})\\
	&=&\rho_{1,1}^{2}\cdots\rho_{1,g-3}^{2}\rho_{1,g-2}^{2}\rho_{1,g-1}U^{-1}T_{1}T_{1}^{-1}Ua_{1}\rho_{1,g}T_{1}^{-1}\quad\text{by }(\textnormal{j})\\
	&=&\rho_{1,1}^{2}\cdots\rho_{1,g-3}^{2}\rho_{1,g-2}^{2}\rho_{1,g-1}a_{1}\rho_{1,g}T_{1}^{-1}\\
	&=&\rho_{1,1}^{2}\cdots\rho_{1,g-3}^{2}\rho_{1,g-2}^{2}\rho_{1,g-1}^{2}\rho_{1,g}^{2}T_{1}^{-1}\\
	&=&1\quad\text{by }(\textnormal{c}).
    \end{eqnarray*}
\end{proof}

So, the exact sequence~(\ref{eq:Splitexactsequence}) splits. In order to apply 
Theorem~\ref{thm:suite exacte}, we have to prove that the action of $\pi_{1}(N_{g})$ on 
$P_{n-1}(N_{g,1})$ is trivial on $H_{1}(P_{n-1}(N_{g,1});\FF_{2})$. This is the claim of the following 
proposition.

\begin{prop} \label{prop:commutator}
    For all $x\in\im\sigma$ and $a\in\im\mu$, one has 
    $[x^{-1},a^{-1}]=xax^{-1}a^{-1}\in\gamma_{2}^{2}(\im\mu)$.
\end{prop}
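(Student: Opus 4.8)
The statement is equivalent to saying that conjugation by $\im\sigma$ induces the trivial action on $H_{1}(\im\mu;\FF_{2})=\im\mu/\gamma_{2}^{2}(\im\mu)$; indeed $[x^{-1},a^{-1}]=xax^{-1}a^{-1}$ lies in $\gamma_{2}^{2}(\im\mu)$ exactly when $xax^{-1}\equiv a$ modulo $\gamma_{2}^{2}(\im\mu)$. Since $\gamma_{2}^{2}(\im\mu)$ is characteristic in $\im\mu$ and $\im\mu$ is normal in $P_{n}(N_{g})$, conjugation by any element of $P_{n}(N_{g})$ descends to an automorphism of $H_{1}(\im\mu;\FF_{2})$, and the resulting map $\im\sigma\to\aut{H_{1}(\im\mu;\FF_{2})}$ is a group homomorphism. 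As $\im\sigma$ is generated by the $\sigma(p_{i})$ (Proposition~\ref{prop:section}), it therefore suffices to check that each $\sigma(p_{i})$ fixes the class of each generator of $\im\mu$.

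The plan is first to identify $H_{1}(\im\mu;\FF_{2})$. The relation $[\rho_{i,k},\rho_{j,k}^{-1}]=B_{i,j}^{-1}$ of Lemma~\ref{lem:relations} shows that for $2\le i<j$ one has $B_{i,j}\in[\im\mu,\im\mu]\subseteq\gamma_{2}^{2}(\im\mu)$, so $[B_{i,j}]=0$. For $B_{1,j}$ with $j\ge 2$ I would invoke relation $(\mathrm{c})$ of Theorem~\ref{thm:Presentation cas ferme} with $i=j$: solving $T_{j}=\rho_{j,1}^{2}\cdots\rho_{j,g}^{2}=B_{1,j}B_{2,j}\cdots B_{j-1,j}B_{j,j+1}\cdots B_{j,n}$ for $B_{1,j}$ expresses it as a product of the squares $\rho_{j,l}^{2}\in\im\mu$ and of the $B_{a,b}$ with both indices $\ge 2$; all of these vanish in $H_{1}(\im\mu;\FF_{2})$, whence $[B_{1,j}]=0$ as well. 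Consequently $H_{1}(\im\mu;\FF_{2})$ is spanned by the classes $[\rho_{j,k}]$ with $2\le j\le n$ and $1\le k\le g$, so it is enough to verify that each $\sigma(p_{i})$ fixes these.

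Next I would factor each $\sigma(p_{i})$ as a product of some $\rho_{1,k}$ and a word lying in $\im\mu$. The words $U=a_{n}\cdots a_{2}$ and $T_{1}=B_{1,2}\cdots B_{1,n}$ both lie in $\im\mu$: the factors $a_{m}=\rho_{m,g-1}\rho_{m,g}$ occurring in $U$ have $m\ge 2$, and $\lambda(T_{1})=p_{1}^{2}\cdots p_{g}^{2}=1$ by relation $(\mathrm{c})$ for $i=1$. Conjugation by an element of $\im\mu$ is inner, hence trivial on $H_{1}(\im\mu;\FF_{2})$, so in each case the induced map on $H_{1}$ reduces to conjugation by the factor $\rho_{1,k}$ (with $k\in\{i,g-2,g-1,g\}$). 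For this I apply relations $(\mathrm{b}_{1})$, $(\mathrm{b}_{2})$, $(\mathrm{b}_{3})$ to $\rho_{1,k}\rho_{j,l}\rho_{1,k}^{-1}$: for $k<l$ the element is fixed on the nose; for $k=l$ one obtains $\rho_{j,k}^{-1}B_{1,j}^{-1}\rho_{j,k}^{2}$; and for $k>l$ one obtains the longer word of $(\mathrm{b}_{3})$. Reducing modulo $\gamma_{2}^{2}(\im\mu)$, where squares vanish, every class is its own inverse, and $[B_{1,j}]=0$ by the previous step, each of these collapses to $[\rho_{j,l}]$. Thus every $\sigma(p_{i})$ acts trivially, which proves the proposition.

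The main obstacle is the second step. The relations $(\mathrm{b}_{2})$ and $(\mathrm{b}_{3})$ unavoidably introduce the factors $B_{1,j}$ under conjugation, and the whole argument succeeds only because their classes vanish in $H_{1}(\im\mu;\FF_{2})$. Establishing $[B_{1,j}]=0$ is thus the crux, and it is precisely here that the prime $p=2$ enters: the vanishing of $[T_{j}]$ requires the squares $\rho_{j,l}^{2}$ to die in $H_{1}(\im\mu;\FF_{2})$, which holds over $\FF_{2}$ but fails over $\FF_{p}$ for odd $p$, in agreement with the fact that the semi-direct product is only a $2$-almost direct product.
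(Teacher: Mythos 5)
Your proof is correct, and it shares its computational core with the paper's: both reduce to checking generators, and both hinge on the fact that $B_{1,j}\in\gamma_{2}^{2}(\im\mu)$, obtained from relation (c) combined with relation (e) of Lemma~\ref{lem:relations} --- which is exactly where the prime $2$ enters, since the squares $\rho_{j,l}^{2}$ must die --- before feeding relations $(\mathrm{b}_{1})$--$(\mathrm{b}_{3})$ into the conclusion. The packaging, however, is genuinely different, and yours is leaner. The paper verifies pair by pair that $[\sigma(p_{i})^{-1},a^{-1}]\in\gamma_{2}^{2}(\im\mu)$ for each generator $a$ of $\im\mu$: for $a=B_{j,k}$ ($2\leq j<k\leq n$) it shows these commutators are actually trivial, which requires relations $(\mathrm{d}_{1})$, (i) and (k) of Lemma~\ref{lem:relations}, and for the composite generators $\sigma(p_{g-2})=\rho_{1,g-2}U^{-1}$, $\sigma(p_{g-1})=U\rho_{1,g-1}$, $\sigma(p_{g})=\rho_{1,g}T_{1}^{-1}$ it uses explicit commutator expansions to split off the factors $U,T_{1}\in\im\mu$. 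By passing to $H_{1}(\im\mu;\FF_{2})$ at the outset you make both of these steps formal: the classes $[B_{i,j}]$ all vanish (so the generators $B_{j,k}$ require no computation at all), and inner automorphisms act trivially on $H_{1}$, so the factors $U$ and $T_{1}$ drop out of the homomorphism $P_{n}(N_{g})\to\aut{H_{1}(\im\mu;\FF_{2})}$ without any commutator identities. As a result your argument uses only relation (e) of Lemma~\ref{lem:relations} besides the presentation and Proposition~\ref{prop:section}, bypassing relations (g)--(k) entirely (two of which the paper establishes by drawing braids). The only thing the paper's version yields that yours does not is the sharper fact that the commutators with the $B_{j,k}$ are trivial in $P_{n}(N_{g})$, not merely elements of $\gamma_{2}^{2}(\im\mu)$; that extra precision is not needed for Proposition~\ref{prop:commutator}.
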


\begin{proof}
It is enough to prove the result for $a\in\{B_{j,k},\ 2\leq j<k\leq n\}\cup\{\rho_{j,l},\ 2\leq 
j\leq n\text{ and }1\leq l\leq g\}$ and $x\in\{\sigma(p_{1}),\ldots,\sigma(p_{g})\}$, respectively 
sets of generators of $\im\mu$ and $\im\sigma$. Suppose first that 
$2\leq j<k\leq n$. One has:

\begin{list}{$\bullet$}{\leftmargin15mm}
    \item $[\sigma(p_{i})^{-1},B_{j,k}^{-1}]=[\rho_{1,i}^{-1},B_{j,k}^{-1}]=1$ for $1\leq i\leq g-3$ by 
    $(\textnormal{d}_{1})$;
    \item$[\sigma(p_{g-2})^{-1},B_{j,k}^{-1}]=[U\rho_{1,g-2}^{-1},B_{j,k}^{-1}]=1$  by $(\textnormal{d}_{1})$ and 
    $(\textnormal{i})$;
    \item 
    $[\sigma(p_{g-1})^{-1},B_{j,k}^{-1}]=[\rho_{1,g-1}^{-1}U^{-1},B_{j,k}^{-1}]=1$  by $(\textnormal{d}_{1})$ and 
    $(\textnormal{i})$;
    \item 
    $[\sigma(p_{g})^{-1},B_{j,k}^{-1}]=[T_{1}\rho_{1,g}^{-1},B_{j,k}^{-1}]=1$  by $(\textnormal{d}_{1})$ and 
    $(\textnormal{k})$.
\end{list}

\medskip
Now, let $j$ and $l$ be integers such that $2\leq j\leq n$ and $1\leq 
l\leq g$ and let us first prove that 
$[\rho_{1,i}^{-1},\rho_{j,l}^{-1}]\in\gamma_{2}^{2}(\im\mu)$ for all 
$i$, $1\leq i\leq n$:

\begin{list}{$\bullet$}{\leftmargin15mm}
    \item this is clear for $i<l$ by $(\textnormal{b}_{1})$;
    \item for $i=l$, the relation $(\textnormal{b}_{2})$ gives 
    $[\rho_{1,l}^{-1},\rho_{j,l}^{-1}]=\rho_{j,l}^{-1}B_{1,j}^{-1}\rho_{j,l}$. But 
    $$B_{1,j}^{-1}=B_{2,j}\cdots B_{j-1,j}B_{j,j+1}\cdots 
    B_{j,n}\rho_{j,g}^{-2}\cdots\rho_{j,1}^{-2}\quad \text{(relation 
    }(\textnormal{c}))$$ is an element of 
    $\gamma_{2}^{2}(\im\mu)$ by $(\textnormal{e})$, thus we get 
    $[\rho_{1,l}^{-1},\rho_{j,l}^{-1}]\in\gamma_{2}^{2}(\im\mu)$.
    \item If $l<i$ then 
    $[\rho_{1,i}^{-1},\rho_{j,l}^{-1}]=[B_{1,j}\rho_{j,i}^{-1}B_{1,j}\rho_{j,i},\rho_{j,l}^{-1}]$ 
    by $(\textnormal{b}_{3})$ so $[\rho_{1,i}^{-1},\rho_{j,l}^{-1}]\in\gamma_{2}^{}(\im\mu)$ since $\rho_{j,l}$, 
    $\rho_{j,i}$ and $B_{1,j}$ are elements of $\im\mu$.
\end{list}
From this, we deduce the following facts.

\medskip
(1) $[\sigma(p_{i})^{-1},\rho_{j,l}^{-1}]\in\gamma_{2}^{2}(\im\mu)$ for $i\leq g-3$ since 
$\sigma(p_{i})=\rho_{1,i}$.

\medskip
(2) 
$[\sigma(p_{g-2})^{-1},\rho_{j,l}^{-1}]=[U\rho_{1,g-2}^{-1},\rho_{j,l}^{-1}]=
\rho_{1,g-2}[U,\rho_{j,k}^{-1}]\rho_{1,g-2}^{-1}[\rho_{1,g-2}^{-1},\rho_{j,l}^{-1}]$. But $U$ and 
$\rho_{j,l}^{-1}$ are elements of $\im\mu$ thus 
$[U,\rho_{j,l}^{-1}]\in\Gamma_{2}(\im\mu)\subset\gamma_{2}^{2}(\im\mu)$. Consequently, 
$\rho_{1,g-2}[U,\rho_{j,k}^{-1}]\rho_{1,g-2}^{-1}\in\gamma_{2}^{2}(\im\mu)$ since 
$\gamma_{2}^{2}(\im\mu)$ is a caracteristic subgroup of $\im\mu$ and $\im\mu$ is normal in 
$P_{n}(N_{g})$. Thus, we get $[\sigma(p_{g-2})^{-1},\rho_{j,l}^{-1}]\in\gamma_{2}^{2}(\im\mu)$.
In the same way, one has 
$$[\sigma(p_{g-1})^{-1},\rho_{j,l}^{-1}]=[\rho_{1,g-1}^{-1}U^{-1},\rho_{j,l}^{-1}]=
U[\rho_{1,g-1}^{-1},\rho_{j,l}^{-1}]U^{-1}[U,\rho_{j,l}^{-1}]\in\gamma_{2}^{2}(\im\mu).$$

\medskip
(4) At last, 
$$[\sigma(p_{g})^{-1},\rho_{j,l}^{-1}]=[T_{1}\rho_{1,g}^{-1},\rho_{j,l}^{-1}]=
\rho_{1,g}[T_{1},\rho_{j,l}^{-1}]\rho_{1,g}^{-1}[\rho_{1,g},\rho_{j,l}^{-1}]\in\gamma_{2}^{2}(\im\mu)$$
since $T_{1},\rho_{j,l}\in\im\mu$ and $[\rho_{1,g},\rho_{j,l}^{-1}]\in\gamma_{2}^{2}(\im\mu)$.
 
\end{proof}

We are now ready to prove the main result of this section

\begin{thm}\label{thm:closed}
    For all $g\geq 2$ and $n\geq 1$, the pure braid group $P_{n}(N_{g})$ is residually 2-finite.
\end{thm}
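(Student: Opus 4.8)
The plan is to read Theorem~\ref{thm:closed} off the general machinery of Section~3 applied to the split exact sequence~(\ref{eq:Splitexactsequence}). Concretely, I would invoke Corollary~\ref{cor:residually p-finite} with $A=P_{n-1}(N_{g,1})$, $B=P_{n}(N_{g})$ and $C=\pi_{1}(N_{g})$, so that the whole argument reduces to verifying its three hypotheses: that the sequence exhibits $P_{n}(N_{g})$ as a $2$-almost direct product of $A$ and $C$, that $A$ is residually $2$-finite, and that $C$ is residually $2$-finite.

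For the first hypothesis, Proposition~\ref{prop:section} shows that $\sigma$ is a genuine homomorphic section, so the sequence splits, and Proposition~\ref{prop:commutator} shows that the induced action of $\mathrm{im}\,\sigma$ on the fibre is trivial modulo $\gamma_{2}^{2}$, i.e. trivial on $H_{1}(P_{n-1}(N_{g,1});\FF_{2})$. By Definition~\ref{deft:main} this is exactly the assertion that $P_{n}(N_{g})$ is a $2$-almost direct product of $P_{n-1}(N_{g,1})$ and $\pi_{1}(N_{g})$ (and by Proposition~\ref{prop:allpalmost} this property is independent of the chosen section). For the second hypothesis, when $n\geq 2$ the fibre $P_{n-1}(N_{g,1})$ is a pure braid group of a surface with non-empty boundary, hence residually $2$-finite by Theorem~\ref{thm:casbord}; when $n=1$ the fibre is trivial and the sequence degenerates to the isomorphism $P_{1}(N_{g})\cong\pi_{1}(N_{g})$, so the statement follows at once from the third hypothesis.

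The remaining, and only genuinely external, point is the third hypothesis: the residual $2$-finiteness of the closed nonorientable surface group $\pi_{1}(N_{g})$ for $g\geq 2$. I would treat this as the base case of the whole scheme and establish it independently, since it cannot be extracted from Theorem~\ref{thm:casbord} (which concerns surfaces with boundary, whose groups are free). Here I expect the main work to lie: one must produce, for every nontrivial $w\in\pi_{1}(N_{g})=\langle p_{1},\dots,p_{g}\mid p_{1}^{2}\cdots p_{g}^{2}\rangle$, a finite $2$-group quotient detecting $w$, equivalently show $\bigcap_{m}\gamma_{m}^{2}\pi_{1}(N_{g})=\{1\}$. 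This is the classical fact that closed surface groups are residually $p$-finite for every prime $p$ (in particular for $p=2$, which is all we need). For $p=2$ one may, for instance, observe that in the free group $F=\langle p_{1},\dots,p_{g}\rangle$ the relator lies in $\gamma_{2}^{2}F\setminus\gamma_{3}^{2}F$ (its class in $H_{1}(F;\FF_{2})=\FF_{2}^{g}$ vanishes, whereas its initial form $\sum_{i}p_{i}^{[2]}$ in the associated mod-$2$ restricted Lie algebra is nonzero), which places the presentation in the regime where the mod-$2$ lower central series is controlled and intersects trivially. Once $\pi_{1}(N_{g})$ is known to be residually $2$-finite, Corollary~\ref{cor:residually p-finite} applies verbatim and yields Theorem~\ref{thm:closed}; the difficulty is therefore not the final assembly but this separate residual property of the quotient surface group.
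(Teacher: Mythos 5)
Your assembly of the theorem is exactly the paper's: split the sequence~(\ref{eq:Splitexactsequence}) via Proposition~\ref{prop:section}, use Proposition~\ref{prop:commutator} to see that $P_{n}(N_{g})$ is a $2$-almost direct product of $P_{n-1}(N_{g,1})$ and $\pi_{1}(N_{g})$, quote Theorem~\ref{thm:casbord} for the fibre, and feed everything into Corollary~\ref{cor:residually p-finite}. The gap is precisely the step you yourself flag as ``the main work'': the residual $2$-finiteness of $\pi_{1}(N_{g})$ for $g\geq 2$, and what you offer there is not a proof. First, the ``classical fact'' you invoke --- that closed surface groups are residually $p$-finite for every prime $p$ --- is false for nonorientable surfaces: in the Klein bottle group $\pi_{1}(N_{2})=\langle a,b \mid aba^{-1}=b^{-1}\rangle$, any homomorphism to a finite $p$-group with $p$ odd must kill $b$, since otherwise conjugation by the image of $a$ would induce the inversion automorphism, of order $2$, on the nontrivial cyclic $p$-group generated by the image of $b$. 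So $\pi_{1}(N_{2})$ is not residually $p$-finite for any odd $p$, there is no orientation-blind theorem to cite, and the prime $2$ must enter in an essential way. Second, your actual argument for $p=2$ --- that the relator lies in $\gamma_{2}^{2}F\setminus\gamma_{3}^{2}F$ with nonzero initial form $\sum_{i}p_{i}^{[2]}$, ``which places the presentation in the regime where the mod-$2$ lower central series is controlled and intersects trivially'' --- is a gesture toward Labute-type results, not an argument: knowing that the relator has a nonzero (even strongly free) initial form controls the associated graded restricted Lie algebra, but no theorem stated in the paper, nor a standard quotable one, converts this by itself into $\bigcap_{m}\gamma_{m}^{2}\bigl(F/\langle\langle r\rangle\rangle\bigr)=\{1\}$, which is what residual $2$-finiteness means here.

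The paper fills this step by cases, and the case division reflects exactly the subtlety your sketch glosses over: for $g\geq 4$, $\pi_{1}(N_{g})$ is residually free (Baumslag), hence residually $2$-finite; for $g=3$, where residual freeness fails, it cites Levitt--Minasyan (lemma~8.9 of~\cite{LM}); and for $g=2$ it stays inside its own machinery, observing that the Klein bottle group is a $2$-almost direct product of $\Z$ by $\Z$, so that Corollary~\ref{cor:residually p-finite} applies. If you want a uniform replacement for these citations, one is available with tools already in the paper: the orientation double cover gives an extension $1\to\pi_{1}(\Sigma_{g-1})\to\pi_{1}(N_{g})\to\Z/2\Z\to 1$, where $\Sigma_{g-1}$ is the closed orientable surface of genus $g-1$, whose fundamental group is residually free and hence residually $2$-finite; then an extension of a residually $2$-finite group by a finite $2$-group is residually $2$-finite by lemma~1.5 of~\cite{Gr}, the very lemma the paper uses for $P_{3}(\R\mathrm{P}^{2})$. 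Either way, this input needs a genuine argument or a precise citation; as written, your proposal leaves the crucial point unproved.
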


\begin{proof}
Proposition \ref{prop:section} says that the  sequence
        $$
    \xymatrix{
    1 \ar[r] & P_{n-1}(N_{g,1})\ar[r] & P_{n}(N_{g})\ar[r] & \pi_{1}(N_{g})\ar[r] & 1}
    $$
    splits. 
    Now $P_{n-1}(N_{g,1})$ is residually 2-finite (Theorem \ref{thm:casbord}). It is proved in~\cite{B1} 
    and~\cite{B2} that $\pi_{1}(N_{g})$ is residually free for $g\geq 4$, so it is residually 
    2-finite. This result is proved in~\cite{LM} (lemma~8.9) for 
    $g=3$. When $g=2$, $\pi_{1}(N_{2})$ has presentation $\langle 
    a,b\,|\,aba^{-1}=b^{-1}\rangle$ so is a 2-almost direct product of 
    $\Z$ by $\Z$. Since $\Z$ is residually 2-finite, $\pi_{1}(N_{2})$ 
    is residually 2-finite by corollary~\ref{cor:residually 
    p-finite}. So, using Proposition \ref{prop:commutator} and 
    Corollary~\ref{cor:residually p-finite},  we can conclude that 
    $P_{n}(N_{g})$ is residually 2-finite.
\end{proof}

\begin{rem}\label{rem:p}
It follows from the proof of  Theorem \ref{thm:closed} that, when $g>2$, if  $P_{n}(N_{g,1})$ is 
residually $p$-finite for some $p\not= 2$
then the pure braid group $P_{n}(N_{g})$ is also residually $p$-finite.
\end{rem}

\subsection{The case $\boldsymbol{P_{n}(\R\mathrm{P}^{2})}$}

The main reason to exclude  $N_{1}=\R\mathrm{P}^{2}$ in  Theorem \ref{thm:closed} is that the exact sequence~(\ref{eq:Splitexactsequence}) 
doesn't exist in this case, but forgetting at most $n-2$ strands we get  the following exact 
sequence ($1\leq m\leq n-2$ ; see~\cite{VB})
$$
\xymatrix{
1 \ar[r] & P_{m}(N_{1,n-m})\ar[r] & P_{n}(\R\mathrm{P}^{2})\ar[r] & 
P_{n-m}(N_{g})\ar[r] & 1}.
$$
This sequence splits if, and only if $n=3$ and $m=1$ (see~\cite{GG2}). Thus, what we 
know is the following:

\begin{list}{$\bullet$}{\leftmargin5mm}
    \item $P_{1}(\R\mathrm{P}^{2})=\pi_{1}(\R\mathrm{P}^{2})=\Z/2\Z$: $P_{1}(\R\mathrm{P}^{2})$ is 
    a 2-group.
    \item $P_{2}(\R\mathrm{P}^{2})=Q_{8}$, the quaternion group (see~\cite{VB}): 
    $P_{2}(\R\mathrm{P}^{2})$ is a 2-group.
    \item One has the exact sequence
    $$
    \xymatrix{
    1 \ar[r] & P_{1}(N_{1,2})\ar[r] & P_{3}(\R\mathrm{P}^{2})\ar[r] & 
    P_{2}(\R\mathrm{P}^{2})\ar[r] & 1}
    $$
    where $P_{1}(N_{1,2})=\pi_{1}(N_{1,2})$ is a free group of rank $2$, thus is residually 
    2-finite. Since $P_{2}(\R\mathrm{P}^{2})$ is 2-finite, we can conclude that 
    $P_{3}(\R\mathrm{P}^{2})$ is residually 2-finite using lemma 1.5 of~\cite{Gr}.
\end{list}

%%%%%%%%%%%%%%%%%%%%%%%%%%%%%%%%%%%%%%%%%%%%%%%%
%%%%%%%%%%%%%%%%%%%%%%%%%%%%%%%%%%%%%%%%%%%%%%%%

\section*{Appendix: a group presentation for $\boldsymbol{P_{n}(N_{g,b})}$}

\renewcommand{\thethmAnnexe}{{\Alph{thmAnnexe}}}\setcounter{thm}{0}

Here we apply classical method (see~\cite{B,GG3}) to give a 
presentation of the $n^{\textnormal{th}}$ pure braid group of a 
nonorientable surface with boundary. Since $b\geq 1$, we'll see 
$N_{g,b}$ as a disc $D^{2}$ with $g+b-1$ open discs removed and $g$ 
M\"{o}bius strips glued on $g$ boundary components so obtained 
(see figure~\ref{fig:generatorsBord}).

\def\svgwidth{0.5\textwidth}
\setlength{\unitlength}{\svgwidth}
\begin{figure}[h]
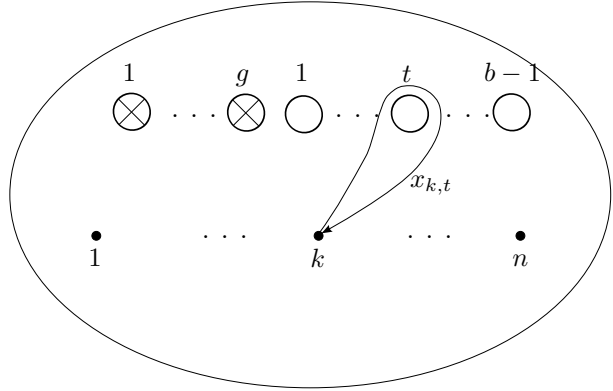

    \begin{center}
	\GenerateursBord
	\caption{Generators $x_{k,t}$ for $P_{n}(N_{g,b})$, $b\geq 1$}
	\label{fig:generatorsBord}
    \end{center}
\end{figure}

\begin{thmAnnexe}\label{thm:PresentationCasBord}
    For $g\geq 1$, $b\geq 1$ and $n\geq 1$, $P_{n}(N_{g,b})$ has the following 
    presentation:
    
    \medskip
    \noindent\ \underline{\bf generators:} $(B_{i,j})_{1\leq i<j\leq n}$, 
    $(\rho_{k,l})_{\build{}{\scriptstyle \!\!1\leq l\leq g}{\scriptstyle 1\leq k\leq n}}$ and 
    $(x_{u,t})_{\build{}{\scriptstyle 1\leq t\leq b-1}{\scriptstyle 
    \!\!\!\!\!\!1\leq u\leq n}}$.
    
    \medskip
    \noindent$\begin{array}[t]{lcl}
    \multicolumn{3}{l}{\underline{\bf relations:}\textnormal{ 
    (a)}\text{ for all }1\leq 
    i<j\leq n\textnormal{ and }1\leq r<s\leq n,}\\
    B_{r,s}B_{i,j}B_{r,s}^{-1}&=&
    \left\{\begin{array}{lll}
    B_{i,j} & \text{if }i<r<s<j\text{ or }r<s<i<j & (\textnormal{a}_{1})\\
    B_{i,j}^{-1} B_{r,j}^{-1}  B_{i,j} B_{r,j} B_{i,j} & \text{if 
    $r<i=s<j$}& (\textnormal{a}_{2})\\
    B_{s,j}^{-1} B_{i,j} B_{s,j} & \text{if $i=r<s<j$}& (\textnormal{a}_{3})\\
    B_{s,j}^{-1}B_{r,j}^{-1} B_{s,j} B_{r,j} B_{i,j} B_{r,j}^{-1} 
    B_{s,j}^{-1} B_{r,j} B_{s,j}   & \text{if $r<i<s<j$}& (\textnormal{a}_{4})
    \end{array}\right.\\
    \multicolumn{3}{l}{}\\
    \multicolumn{3}{l}{\hspace*{\longueur}
    \textnormal{ (b)}\text{ for all 
    }1\leq i<j\leq n\text{ and }1\leq k,l\leq g,}\\
    \rho_{i,k}\rho_{j,l}\rho_{i,k}^{-1}&=&
    \left\{\begin{array}{p{\longueurA}p{\longueurB}l}
    $\rho_{j,l}$ & $\text{if }k<l$ & (\textnormal{b}_{1})\\
    $\rho_{j,k}^{-1} B_{i,j}^{-1}  \rho_{j,k}^2$ & $\text{if }k=l$ & 
    (\textnormal{b}_{2})\\
    $\rho_{j,k}^{-1} B_{i,j}^{-1}\rho_{j,k} B_{i,j}^{-1} \rho_{j,l}
    B_{i,j} \rho_{j,k}^{-1} B_{i,j} \rho_{j,k}$ & $\text{if }k>l$ & 
    (\textnormal{b}_{3})
    \end{array}\right.\\
    \multicolumn{3}{l}{}\\
    \multicolumn{3}{l}{\hspace*{\longueur}
    \textnormal{ (d)}\text{ for all 
    }1\leq i<j\leq n,\ 1\leq k\leq n,\ k\neq j\text{ and }1\leq l\leq g,}\\
    \rho_{k,l}B_{i,j}\rho_{k,l}^{-1}&=&
    \left\{\begin{array}{p{\longueurA}p{\longueurB}l}
    $B_{i,j}$ & $\text{if }k<i\text{ or }j<k$ & (\textnormal{d}_{1})\\
    $\rho_{j,l}^{-1} B_{i,j}^{-1} \rho_{j,l}$ & $\text{if }k=i$ & 
    (\textnormal{d}_{2})\\
    $\rho_{j,l}^{-1} B_{k,j}^{-1} \rho_{j,l} B_{k,j}^{-1} B_{i,j} 
    B_{k,j} \rho_{j,l}^{-1} B_{k,j} \rho_{j,l}$ & $\text{if }i<k<j$ & 
    (\textnormal{d}_{3})
    \end{array}\right.\\
    \multicolumn{3}{l}{}\\
    \multicolumn{3}{l}{\hspace*{\longueur}
    \textnormal{ (l)}\text{ for all 
    }1\leq i<j\leq n,\ 1\leq u\leq n,\ u\neq j\text{ and }1\leq t\leq 
    b-1,}\\
    x_{u,t}B_{i,j}x_{u,t}^{-1}&=&
    \left\{\begin{array}{p{\longueurA}p{\longueurB}l}
    $B_{i,j}$ & $\text{if }u<i\text{ or }j<u$ & (\textnormal{l}_{1})\\
    $x_{j,t}^{-1} B_{i,j}x_{j,t}$ & $\text{if }u=i$ & 
    (\textnormal{l}_{2})\\
    $x_{j,t}^{-1} B_{u,j}x_{j,t} B_{u,j}^{-1} B_{i,j} 
    B_{u,j}x_{j,t}^{-1} B_{u,j}^{-1}x_{j,t}$ & $\text{if }i<u<j$ & 
    (\textnormal{l}_{3})
\end{array}\right.\\
    \multicolumn{3}{l}{}\\
    \multicolumn{3}{l}{\hspace*{\longueur}
    \textnormal{ (m)}\text{ for all 
    }1\leq k,u\leq n,\ k\neq u,\ 1\leq l\leq g\text{ and }1\leq t\leq b-1,}\\
    x_{u,t}\rho_{k,l}x_{u,t}^{-1}&=&
    \left\{\begin{array}{p{\longueurA}p{\longueurB}l}
    $\rho_{k,l}$ & $\text{if }k<u$ & (\textnormal{m}_{1})\\
    $x_{k,t}^{-1}B_{u,k}x_{k,t} B_{u,j}^{-1}\rho_{k,l}
    B_{u,k}x_{k,t}^{-1}B_{u,k}^{-1}x_{k,t}$ & $\text{if }u<k$ & 
    (\textnormal{m}_{2})
\end{array}\right.\\
    \multicolumn{3}{l}{}\\
    \multicolumn{3}{l}{\hspace*{\longueur}
    \textnormal{ (n)}\text{ for all 
    }1\leq i<j\leq n\text{ and }1\leq s,t\leq b-1,}\\
    x_{i,t}x_{j,s}x_{i,t}^{-1}&=&
    \left\{\begin{array}{p{\longueurA}p{\longueurB}l}
    $x_{j,s}$ & $\text{if }t<s$ & (\textnormal{n}_{1})\\
    $x_{j,t}^{-1} B_{i,j}x_{j,t}B_{i,j}^{-1}x_{j,t}$ & $\text{if }t=s$ & 
    (\textnormal{n}_{2})\\
    $x_{j,t}^{-1} B_{i,j}x_{j,t}B_{i,j}^{-1}x_{j,s}
    B_{i,j}x_{j,t}^{-1} B_{i,j}^{-1}x_{j,t}$ & $\text{if }s<t$ & 
    (\textnormal{n}_{3})
    \end{array}\right.\\
\end{array}$
\end{thmAnnexe}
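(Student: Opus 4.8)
The plan is to argue by induction on the number of strands $n$, using the Fadell--Neuwirth fibration exactly as in the classical derivations of surface braid group presentations (\cite{B,GG3}). \textbf{Base case.} For $n=1$ one has $P_{1}(N_{g,b})=\pi_{1}(N_{g,b})$, and since $b\geq 1$ this is a free group of rank $g+b-1$ with basis $\{\rho_{1,l}\}_{1\leq l\leq g}\cup\{x_{1,t}\}_{1\leq t\leq b-1}$. For $n=1$ no generator $B_{i,j}$ occurs and every relation (a)--(n) has an empty index range, so the asserted presentation collapses to exactly this free group; the base case holds.

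\textbf{Inductive step: the exact sequence.} Forgetting the last marked point $z_{n}$ gives the Fadell--Neuwirth fibration $F_{n}(N_{g,b})\to F_{n-1}(N_{g,b})$, whose fiber is $N_{g,b}\setminus\{z_{1},\dots,z_{n-1}\}$. Since (for $b\geq 1$) these spaces are aspherical, the long exact homotopy sequence collapses to a short exact sequence
\[
1 \longrightarrow \Phi \longrightarrow P_{n}(N_{g,b}) \xrightarrow{\ \pi\ } P_{n-1}(N_{g,b}) \longrightarrow 1 ,
\]
where $\Phi=\pi_{1}\bigl(N_{g,b}\setminus\{z_{1},\dots,z_{n-1}\},z_{n}\bigr)$ is free on $\mathcal F=\{B_{i,n}\}_{1\leq i<n}\cup\{\rho_{n,l}\}_{1\leq l\leq g}\cup\{x_{n,t}\}_{1\leq t\leq b-1}$ (of the expected rank $g+b+n-2$). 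Because $b\geq 1$ the new strand can be pushed into a boundary collar, so the sequence splits and admits a geometric section $\sigma$; identifying the free basis of $\Phi$ with the named generators carrying the index $n$ is part of this geometric set-up.

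\textbf{The extension presentation.} I would then apply the standard procedure for presenting a group extension. Taking as generators the lifts under $\sigma$ of the generators of $P_{n-1}(N_{g,b})$ together with the free basis $\mathcal F$, the defining relations split into three kinds: the relators of $\Phi$ (none, as $\Phi$ is free); the relators of the quotient, which—thanks to the splitting, so that each quotient relator lifts to the identity—are precisely the relations of $P_{n-1}(N_{g,b})$, that is relations (a)--(n) in which only indices $\leq n-1$ occur, provided by the induction hypothesis; and the conjugation relations $\sigma(s)\,f\,\sigma(s)^{-1}$ for each generator $s$ of $P_{n-1}(N_{g,b})$ and each $f\in\mathcal F$, rewritten as words in $\mathcal F$. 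The task is to verify that the induced action of $P_{n-1}(N_{g,b})$ on $\Phi$ is described exactly by the instances of (a)--(n) carrying the index $n$: relations (a), (d), (l) with $j=n$ give the conjugates of $B_{i,n}$; relations (b), (d$_{1}$), (m) give the conjugates of $\rho_{n,l}$; and relations (l$_{1}$), (m$_{1}$), (n) give the conjugates of $x_{n,t}$ (the subscript-$1$ cases being commutations read off from the respective relation). Consistency with freeness of $\Phi$ is automatic, since in every such relation the conjugating generator has all its indices $\leq n-1$ and hence lies in the base.

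\textbf{Main obstacle.} The crux is the explicit computation of these conjugation relations: each conjugate of a fiber generator must be expressed in the free basis $\mathcal F$ by dragging the relevant loop of strand $n$ around a puncture, a crosscap, or an extra boundary hole, in combination with the surface-braid relations already recorded in Theorem~\ref{thm:Presentation cas ferme}. The delicate cases are the mixed interactions coming from the nonorientability of the M\"obius bands—conjugation of a crosscap loop or of a boundary loop (relations (m) and (n)) and the action on $B_{i,n}$ of the boundary generators (relation (l))—where the inversions and the longer right-hand sides appear; these are best checked geometrically (compare the figures). The remaining points, namely that the projection is genuinely a fibration with aspherical fiber and that the section exists, are routine and rest exactly on the hypothesis $b\geq 1$.
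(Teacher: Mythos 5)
Your proposal follows essentially the same route as the paper's own proof: induction on the number of strands via the Fadell--Neuwirth short exact sequence with free fiber $\pi_{1}(N_{g,b}\setminus\{z_{1},\ldots,z_{n-1}\})$, presenting the extension by combining the (empty) fiber relators, the lifted quotient relators, and the conjugation action of the base on the fiber generators. Like the paper, you defer the explicit verification that this conjugation action is given by the relations carrying the top index (the paper leaves it ``to the reader'', you defer it to geometric checking), so the two arguments match in both structure and level of detail.
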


\begin{proof}
    The proof works by induction and generalizes those of~\cite{GG3} 
    (closed non orientable case) and~\cite{B} (orientable case, possibly with boundary components). It uses the following short 
    exact sequence obtained by forgetting the last strand (see~\cite{FN}):
    \[
    \xymatrix{
    1 \ar[r] & 
    \pi_{1}(N_{g,b}\setminus\{z_{1},\ldots,z_{n}\},z_{n+1})\ar[r]^-{\alpha} & P_{n+1}(N_{g,b})\ar[r]^{\beta} & 
    P_{n}(N_{g,b})\ar[r] & 1.}
    \]
    The presentation is correct for $n=1$: 
    $P_{1}(N_{g,b})=\pi_{1}(N_{g,b})$ is free on the $\rho_{1,l}$'s 
    and $x_{1,t}$'s for $1\leq l\leq g$ and $1\leq t\leq b-1$. 
    Suppose inductively that $P_{n}(N_{g,b})$ has the given 
    presentation. Then, observe that $\{B_{i,n+1}\,/\,1\leq i\leq 
    n\}\cup\{\rho_{n+1,l}\,/\,1\leq l\leq 
    g\}\cup\{x_{n+1,t}\,/\,1\leq t\leq b-1\}$ is a free generators 
    set of $\im\alpha$ and $(B_{i,j})_{1\leq i<j\leq n}$, 
    $(\rho_{k,l})_{\build{}{\scriptstyle \!\!1\leq l\leq g}{\scriptstyle 1\leq k\leq n}}$ and 
    $(x_{u,t})_{\build{}{\scriptstyle 1\leq t\leq b-1}{\scriptstyle 
    \!\!\!\!\!\!1\leq u\leq n}}$ are coset representative for the 
    considered generators of $P_{n}(N_{g,b})$. There are three types 
    of relations for $P_{n+1}(N_{g,b})$. The first one comes from the 
    relations in $\im\alpha$: there are none here, since this group 
    is free. The second type 
    comes from the relations in $P_{n}(N_{g,b})$: they lift to the 
    same relations in $P_{n+1}(N_{g,b})$. Finally, the third type 
    arrives by studying the action of $P_{n}(N_{g,b})$ on $\im\alpha$ 
    by conjugation. We leave to the reader to verify that this action 
    corresponds to the given relations.
\end{proof}

\begin{rem}
    What precedes proves that $P_{n+1}(N_{g,b})$ is a semidirect 
    product of the free group \linebreak[4]
    $\pi_{1}(N_{g,b}\setminus\{z_{1},\ldots,z_{n}\},z_{n+1})$ by 
    $P_{n}(N_{g,b})$. Therefore, by recurrence, we get that  $P_{n+1}(N_{g,b})$ is an iterated 
    semidirect 
    product of (finitely generated) free groups.
\end{rem}

\vspace{10pt}

\noindent PAOLO BELLINGERI, Universit\'{e} de Caen, CNRS UMR 6139, LMNO, Caen, 14000 (France).
Email: paolo.bellingeri@math.unicaen.fr

\vspace{5pt}
\noindent SYLVAIN GERVAIS, Universit\'{e} de Nantes, CNRS-UMR 6629, 
Laboratoire Jean Leray, 2, rue de la Houssini\`{e}re, F-44322 NANTES 
cedex 03 (France). Email: sylvain.gervais@univ-nantes.fr

\end{document}